\theoremstyle{plain}
\newtheorem{thm}{Theorem}[section]
\newtheorem{prop}[thm]{Proposition}
\newtheorem{lem}[thm]{Lemma}
\newtheorem{conj}[thm]{Conjecture}
\theoremstyle{definition} 
\newtheorem{defn}[thm]{Definition}
\newtheorem{rem}[thm]{Remark}
\newtheorem{ex}[thm]{Example}
\newtheorem*{ack}{Acknowledgments} 
\begin{document}

\subjclass[2010]{Primary 14J17; Secondary 14B05}

\keywords{semi-log canonical, minimal log discrepancies, jet scheme, singularities  in positive characteristic}

\title[Characterization of $2$-dimensional SLC hypersurfaces]{Characterization of two-dimensional semi-log canonical hypersurfaces in arbitrary characteristic}

\author{Kohsuke Shibata}

\address{Graduate School of Mathematical Sciences, University of Tokyo, 3-8-1 %\newline
Komaba, Meguro-ku, 
Tokyo, 153-8914, Japan.}

\email{shibata@ms.u-tokyo.ac.jp}

%\thanks{
%The author is partially supported by JSPS Grant-in-Aid for Early-Career Scientists 19K14496 and the
%Iwanami Fujukai Foundation.
%}

\begin{abstract}
In this paper we characterize two-dimensional semi-log canonical hypersurfaces in arbitrary characteristic
from the viewpoint of the initial term of the defining equation.
As an application, we prove a
conjecture about a uniform bound of divisors computing minimal log discrepancies
for two dimensional varieties, which is a conjecture by Ishii and also a special case
of the conjecture by Musta\c{t}\v{a}-Nakamura.
\end{abstract}

\maketitle

\section{Introduction}
Log canonical singularities play important roles in the minimal model program. 
These singularities are normal, as is well known. 
In \cite{KS},
Koll\'{a}r and Shepherd-Barron  introduced a semi-log canonical singularity which is a kind of generalization of log canonical singularity to non-normal
one.
The classification of two-dimensional semi-log canonical  hypersurfaces in characteristic $0$ is given by Liu and Rollenske in \cite{LR}.

The aim of this paper is to understand two-dimensional semi-log canonical  hypersurfaces in arbitrary characteristic from the viewpoint of the initial term of the defining equation.

\begin{thm}\label{Main Theorem of slc}
Let $k$ be an algebraically closed field of
arbitrary characteristic, $X=\mathrm{Spec}k[[x,y,z]]/(f)$ and $f\in (x,y,z)$. 
Then the following are equivalent:
\begin{enumerate}
\item $X$ is semi-log canonical at the origin,
\item There exist a $k$-automorphism  $\phi$ of $k[[x,y,z]]$ and $w=(w_1,w_2,w_3)\in \mathbb N^3$ such that 
$\mathrm{Spec}k[[x,y,z]]/(\mathrm{in}_w\phi(f))$ is semi-log canonical at the origin and $w$ is  one of the following:
\begin{align*}
\mathrm{(i)}&\ (1,1,1),& \mathrm{(ii)}&\ (3,2,2),& \mathrm{(iii)}&\ (2,1,1),& \mathrm{(iv)}&\ (6,4,3),\\
 \mathrm{(v)}&\ (9,6,4),&\mathrm{(vi)}&\ (15,10,6),& \mathrm{(vii)}&\ (3,2,1).
\end{align*}
\end{enumerate}
\end{thm}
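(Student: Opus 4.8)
The plan is to translate semi-log canonicity into a statement about log canonical thresholds and then carry out a normal-form analysis organised by multiplicity. Recall that for a reduced hypersurface $X=\mathrm{Spec}\,k[[x,y,z]]/(f)$ one has: $X$ is semi-log canonical at the origin if and only if the pair $(\mathrm{Spec}\,k[[x,y,z]],V(f))$ is log canonical there, i.e. $\mathrm{lct}_0(f)\ge 1$. Moreover, if $f$ is not reduced it has a non-unit square factor, which forces $\mathrm{lct}_0(f)<1$ and, since $\mathrm{in}_w$ is multiplicative, a square factor in every $\mathrm{in}_w\phi(f)$; so in that case both (1) and (2) fail, and I may assume $f$ reduced. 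The implication (2)$\Rightarrow$(1) is then the short one: given $\phi$ and $w$, the standard one-parameter family $F(x,y,z,t)=t^{-d}\phi(f)(t^{w_1}x,t^{w_2}y,t^{w_3}z)$ with $d=\mathrm{ord}_w\phi(f)$ has general fibre isomorphic to $f$ and special fibre $\mathrm{in}_w\phi(f)$, so lower semicontinuity of the log canonical threshold gives $1\le\mathrm{lct}_0(\mathrm{in}_w\phi(f))\le\mathrm{lct}_0(f)$, and $X$ is semi-log canonical. This direction uses nothing about the particular list of weights.

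For (1)$\Rightarrow$(2), set $m=\mathrm{mult}_0f$; from $\mathrm{lct}_0(f)\le 3/m$ we get $m\le 3$. If $m=1$, then $\mathrm{in}_{(1,1,1)}f$ is a nonzero linear form with smooth zero locus; take $w=(1,1,1)$. If $m=3$, then $\mathrm{lct}_0(f)=1$, so the exceptional divisor $E_0$ of the blow-up of the origin (log discrepancy $3-3=0$) is a log canonical place of $(\mathrm{Spec}\,k[[x,y,z]],V(f))$; restricting by adjunction along $E_0\cong\mathbb P^2$ shows $(\mathbb P^2,C)$ is log canonical, where $C$ is the projectivised tangent cone, a plane cubic with $C\in|{-}K_{\mathbb P^2}|$ (in particular $C$, hence $\mathrm{in}_{(1,1,1)}f$, is reduced). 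By Kollár's criterion for cones over log Calabi--Yau pairs, the affine cone $V(\mathrm{in}_{(1,1,1)}f)$ over $(\mathbb P^2,C)$ is log canonical, so again $w=(1,1,1)$ works. The three inputs here — semicontinuity of $\mathrm{lct}$, adjunction to a smooth divisor, and the cone criterion in the boundary case $K_{\mathbb P^2}+C\equiv 0$ — hold in every characteristic.

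The case $m=2$ is the heart of the matter. Write $f=q_2+q_3+\cdots$ with $q_2\ne 0$ a quadratic form; over $k$ such a form is equivalent to exactly one of $0,x^2,xy,xy+z^2$, so $\mathrm{rank}\,q_2\in\{1,2,3\}$. If $\mathrm{rank}\,q_2\ge 2$, then $\mathrm{in}_{(1,1,1)}f=q_2$ and $V(q_2)$ is either two transversal hyperplanes or the $A_1$-cone, hence semi-log canonical, and $w=(1,1,1)$ works. If $\mathrm{rank}\,q_2=1$, then after a linear change $q_2=z^2$, and Weierstrass preparation in $z$ together with Tschirnhaus-type substitutions put $\phi(f)$ in the form $z^2+z\,a(x,y)+b(x,y)$ with $\mathrm{ord}_0 a\ge 2$ and $\mathrm{ord}_0 b\ge 3$ (one may take $a=0$ when $\mathrm{char}\,k\ne 2$ by completing the square, and then $\mathrm{lct}_0(z^2+b)=\min\{1,\tfrac12+\mathrm{lct}_0(b)\}$ by Thom--Sebastiani). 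Semi-log canonicity of $X$ severely restricts $a$ and $b$, and I invoke the classification of the resulting surface double points — the classical $A,D,E,\widetilde E,T_{p,q,r}$ series when $\mathrm{char}\,k\notin\{2,3,5\}$ — to see that, after a suitable further coordinate change $\phi$, the initial term $\mathrm{in}_w\phi(f)$ is one of the weighted-homogeneous models $z^2+b_3$ with $w=(3,2,2)$ (the $D_4$/pinch-point range), $z^2+b_4$ with $w=(2,1,1)$ (the $\widetilde E_7$/$T_{2,4,q}$ range), $z^2+x^3+y^4,\ z^2+x^3+xy^3,\ z^2+x^3+y^5$ with $w=(6,4,3),(9,6,4),(15,10,6)$ (for $E_6,E_7,E_8$), and $z^2+x^3+(\text{a weight-}6\text{ form in }x,y)$ with $w=(3,2,1)$ (the $\widetilde E_8$/$T_{2,3,q}$ range); one then checks semi-log canonicity of each model directly (for instance $V(z^2+x^2y)$ is the pinch point and $V(z^2+x^3+y^6)$ is simple elliptic), and in the finitely many degenerate subcases $\mathrm{in}_w\phi(f)$ is still $w$-homogeneous with reduced zero locus and $\mathrm{lct}_0\ge 1$. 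Conversely, when $V(b)$ is too degenerate (essentially $\mathrm{ord}_0 b\ge 5$, or a component of multiplicity $\ge 3$) one has $\mathrm{lct}_0(f)<1$, so such $f$ do not occur. This exhausts the seven weights.

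The main obstacle is positive characteristic, and above all $\mathrm{char}\,k=2$. There $q_2$ cannot be diagonalised, the term $z\,a(x,y)$ cannot be eliminated, the Thom--Sebastiani formula is unavailable, and the double points that appear are Artin's modified rational double points $A_n^r,D_n^r,E_6^r,E_7^r,E_8^r$ together with their non-rational log canonical degenerations rather than the classical forms; so the normal-form bookkeeping in the $m=2$ case must be redone from scratch, keeping track of the invariant $a$, and for each resulting form one must exhibit a weight from the list of seven with $\mathrm{in}_w\phi(f)$ reduced and $\mathrm{lct}_0\ge 1$ — verifying in particular that the characteristic-$2$ (and, for $E_6$, characteristic-$3$; for $E_8$, characteristic-$5$) models can be put, after a coordinate change, into $w$-homogeneous shape, or at least have semi-log canonical $w$-initial term, for those same seven weights. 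Once this characteristic-specific case analysis is completed, every other ingredient used above is characteristic-free, and assembling the pieces gives the equivalence.
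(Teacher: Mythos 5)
Your overall architecture parallels the paper's: the implication $(2)\Rightarrow(1)$ rests on an ``initial terms only make singularities worse'' inequality together with the translation of semi-log canonicity into a condition on the pair $(\mathrm{Spec}k[[x,y,z]],(f))$, and $(1)\Rightarrow(2)$ is a normal-form analysis organised by multiplicity $\le 3$. The difficulty is that the ingredients you declare to be characteristic-free are exactly the points where a proof is needed in characteristic $p$. Lower semicontinuity of the log canonical threshold along the $\mathbb G_m$-degeneration $t^{-d}\phi(f)(t^{w_1}x,t^{w_2}y,t^{w_3}z)$, adjunction of log canonicity to the exceptional $\mathbb P^2$ in your multiplicity-$3$ step, and Koll\'ar's cone criterion are standard over $\mathbb C$ but are asserted, not proved, in positive characteristic; the paper replaces them by arc-space arguments valid in all characteristics, namely $\mathrm{mld}(0;A,\mathfrak a)\ge\mathrm{mld}(0;A,\mathrm{in}_w\mathfrak a)$ (Proposition \ref{mld initial}), inversion of adjunction via jets (Theorem \ref{inversion of adjunction}, feeding into Proposition \ref{slc mld}), and the equality $\mathrm{mld}(0;A,(f))=\mathrm{mld}(0;A,(\mathrm{in}_{(1,1,1)}f))$ for $\mathrm{ord}_{(1,1,1)}f=3$ quoted from Ishii's finite-determination paper (Lemma \ref{slc order 3}).

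The genuine gap, however, is the multiplicity-$2$ case in characteristics $2$, $3$, $5$, which you explicitly leave undone (``the normal-form bookkeeping \dots must be redone from scratch'', ``one must exhibit a weight \dots''). That case is the actual content of the theorem: in characteristic $0$ the classification is already in Liu--Rollenske, and the paper's new work is precisely Lemmas \ref{ord 4 chk = 2}, \ref{ord 4 chk neq 2} and Proposition \ref{key prop}. In characteristic $2$ one cannot diagonalise the quadratic part, nor eliminate the $xyz$ and $xz^3$ terms, and log canonicity of the surviving degenerate $w$-initial forms (such as $x^2+a_1xyz+\cdots$ for $w=(2,1,1)$, $x^2+xy^2+y^3z+\cdots$, or $x^2+y^3+a_1xyz+a_2xz^3+\cdots$ for $w=(3,2,1)$) is established there by Fedder's criterion combined with Hara--Watanabe ($F$-pure $\Rightarrow$ log canonical) and Hirokado's characteristic-$p$ description of simple elliptic singularities, while the non-lc branches are certified by the explicit divisors $E_{(10,5,4)}$, $E_{(15,8,6)}$, $E_{(21,14,6)}$. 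Without carrying out this characteristic-specific case analysis, and without a characteristic-free substitute for your degeneration and adjunction inequalities, the proposal is a plausible plan in characteristic $0$ rather than a proof of the stated theorem in arbitrary characteristic.
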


To understand semi-log canonical singularities, we need to consider minimal log discrepancies.
Minimal log discrepancies are invariants of singularities appearing in the minimal model program.

To prove Theorem \ref{Main Theorem of slc},  we investigate minimal log discrepancies for a pair of a  $3$-dimensional smooth variety and a principal ideal  from the viewpoint of the initial ideal.

\begin{thm}\label{Main Theorem of mld}
Let $k$ be an algebraically closed field of
arbitrary characteristic,  $A=\mathrm{Spec}k[[x,y,z]]$,  $0$ be the origin of $A$ and $f\in k[[x,y,z]]\setminus\{0\}$.
Then there exist a $k$-automorphism  $\phi$ of $k[[x,y,z]]$ and $w=(w_1,w_2,w_3)\in \mathbb N^3$ such that 
$$\mathrm{mld}(0;A,(f))=\mathrm{mld}\big(0;A,(\mathrm{in}_w\phi(f))\big)$$
and $w$ is  one of the following:
\begin{align*}
\mathrm{(i)}&\ (1,1,1),& \mathrm{(ii)}&\ (3,2,2),&\mathrm{(iii)}&\ (2,1,1),& \mathrm{(iv)}&\ (6,4,3),& \mathrm{(v)}&\ (9,6,4),\\
\mathrm{(vi)}&\ (15,10,6),& \mathrm{(vii)}&\ (3,2,1),&\mathrm{(viii)}&\ (10,5,4),& \mathrm{(ix)}&\ (15,8,6),& \mathrm{(x)}&\ (21,14,6).
\end{align*}
Moreover if $\mathrm{ord}_{(1,1,1)}f\neq 3$ or $\mathrm{mld}(0;A,(\mathrm{in}_{(1,1,1)}f))\ge 0$, then
$E_w$ computes  $\mathrm{mld}\big(0;A,(\phi(f))\big)$ and $\mathrm{mld}\big(0;A,(\mathrm{in}_w\phi(f))\big)$,
where $E_w$ is the toric divisor over $A$ corresponding to $w$.
In  case $\mathrm{(ii),(iii)},\dots,\mathrm{(vii)}$, $\mathrm{mld}(0;A,(f))\ge 0.$
In  case $\mathrm{(viii)}$, $\mathrm{(ix)}$,  $\mathrm{(x)}$, $\mathrm{mld}(0;A,(f))=-\infty.$
\end{thm}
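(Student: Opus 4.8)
The plan is to run a case analysis on the multiplicity $m := \mathrm{mult}_0 f = \mathrm{ord}_{(1,1,1)} f$, resting on two facts valid in every characteristic. First, for a primitive weight $w \in \mathbb{N}^3$ with positive entries the toric divisor $E_w$ has center $\{0\}$ and, since $\mathrm{ord}_w$ of a power series depends only on its $w$-initial form,
\[
a_{E_w}(A,(f)) = |w| - \mathrm{ord}_w(f) = a_{E_w}\big(A,(\mathrm{in}_w f)\big);
\]
in particular $\mathrm{mld}(0;A,(f)) \le a_{E_w}(A,(\mathrm{in}_w f))$. Second, the $\mathbb{G}_m$-degeneration attached to $w$ has general fibre isomorphic to $V(f)$ and special fibre $V(\mathrm{in}_w f)$, so lower semicontinuity of minimal log discrepancies along this degeneration yields $\mathrm{mld}(0;A,(f)) \ge \mathrm{mld}(0;A,(\mathrm{in}_w f))$. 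Combining the two,
\[
\mathrm{mld}\big(0;A,(\mathrm{in}_w f)\big) \le \mathrm{mld}(0;A,(f)) \le a_{E_w}\big(A,(\mathrm{in}_w f)\big),
\]
so whenever $\phi$ and $w$ are chosen so that $E_w$ computes $\mathrm{mld}(0;A,(\mathrm{in}_w\phi(f)))$, or so that $a_{E_w}(A,(\mathrm{in}_w\phi(f)))<0$ (which forces both outer terms to equal $-\infty$), the required equality — hence also the ``moreover'' clause and the final sign statements — follows at once. The whole problem reduces to producing such a pair $(\phi,w)$ with $w$ on the list, for every $f$.

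When $m=1$ the hypersurface is smooth, $\mathrm{mld}=2$, and $w=(1,1,1)$ works with $E_{(1,1,1)}$ computing it; when $m\ge4$, $a_{E_{(1,1,1)}}(A,(f))=3-m<0$ and $\mathrm{mld}=-\infty$, so $w=(1,1,1)$ works again. When $m=3$, $\mathrm{in}_{(1,1,1)}f=f_3$ is the cubic form: if $\mathrm{mld}(0;A,(f_3))\ge0$ — the cubic curve $V(f_3)\subset\mathbb{P}^2$ is reduced with at worst ordinary nodes, i.e.\ a smooth or nodal cubic, three non-concurrent lines, or a conic and a transverse line — then $a_{E_{(1,1,1)}}(A,(f_3))=0$ together with the sandwich forces $\mathrm{mld}(0;A,(f))=\mathrm{mld}(0;A,(f_3))=0$ and $E_{(1,1,1)}$ computes it; if $\mathrm{mld}(0;A,(f_3))<0$ (a non-reduced cubic form, or a reduced one with a cusp, tacnode, or ordinary triple point), the second fact gives $\mathrm{mld}(0;A,(f))\le\mathrm{mld}(0;A,(f_3))=-\infty$, so once more $w=(1,1,1)$ — and here the ``moreover'' hypothesis fails, consistently with the statement.

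The substance is $m=2$. If the quadratic part of $f$ is not the square of a linear form then, after $\phi$, $V(f)\cong A_n$ for some $1\le n\le\infty$ ($n=\infty$ meaning $f\sim uv$, a pair of planes), $\mathrm{mld}=1$, realized by $E_{(1,1,1)}$; so take $w=(1,1,1)$. Otherwise the quadratic part may be taken to be $x^2$, and after an automorphism one reduces — up to terms of higher weight for a suitable $w$ — to a normal form of a double point, classified in characteristic $0$ and, differently, in characteristics $2,3$ (the rational double points being treated by Artin). The assignment of weights is: the $A$-type germs fold back to the previous case and use $(1,1,1)$; the $D_n$ family $x^2+y^2z+z^{n-1}$ (including the non-isolated limit $x^2+y^2z$, and $D_4=x^2+y^3+z^3$) is adapted to $(3,2,2)$; the rational double points $E_6,E_7,E_8$ — normal forms $x^2+y^3+z^4$, $x^2+y^3+yz^3$, $x^2+y^3+z^5$ in large characteristic, with their $2,3$-modifications — to $(6,4,3),(9,6,4),(15,10,6)$, where $E_w$ has discrepancy $1$; the strictly log canonical boundary singularities — simple elliptic $x^2+q_4(y,z)$ and kin, and $x^2+y^3+z^6$ — to $(2,1,1)$ and $(3,2,1)$, where $E_w$ has discrepancy $0$; and the non-log-canonical germs whose failure of log canonicity is not visible as a negative discrepancy at a smaller weight (here $a_{E_{(1,1,1)}}=3-m=1\ge0$) — configurations such as $x^2+y^4+z^5$ and its analogues — require the dedicated weights $(10,5,4),(15,8,6),(21,14,6)$, or in other Newton-polygon configurations one of $(6,4,3),(9,6,4),(15,10,6),(2,1,1),(3,2,1)$, to exhibit a listed toric divisor of negative discrepancy. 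For each weighted-homogeneous model appearing, one verifies directly that the prescribed $E_w$ computes its mld or has negative discrepancy; this reduces to the principle that the mld of a quasi-homogeneous singularity is attained by a monomial valuation, followed by a finite optimization over integral monomial weights.

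The main obstacles are the two points where positive characteristic intervenes. The reductions to normal form use completing the square and finite determinacy, which fail or change shape in characteristics $2$ and $3$, where $D_n,E_6,E_7,E_8$ have characteristic-dependent equations; so the double-point classification must be redone and one must check the nontrivial point that the characteristic-$2$ and $-3$ normal forms are still adapted to weights already on the list — the weights turning out to coincide with the characteristic-$0$ ones. Secondly, lower semicontinuity of mld along the $\mathbb{G}_m$-degeneration, and the ``mld of a quasi-homogeneous singularity is monomial'' principle, are proved in characteristic $0$ via resolution of singularities; in arbitrary characteristic they must be routed through jet schemes and motivic integration in the style of Musta\c{t}\v{a} and Yasuda, and verifying that the statements needed here are available at this generality is where the difficulty concentrates. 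Granting these inputs, what remains is the finite bookkeeping of assigning a weight to each singularity type and of reading off, from the discrepancy of $E_w$ on the model, that cases (ii)--(vii) have $\mathrm{mld}\ge0$ and cases (viii)--(x) have $\mathrm{mld}=-\infty$.
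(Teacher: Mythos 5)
Your sandwich $\mathrm{mld}(0;A,(\mathrm{in}_w\phi(f)))\le \mathrm{mld}(0;A,(f))\le a(E_w;A,(\mathrm{in}_w\phi(f)))$ is exactly the paper's main tool (Proposition~\ref{mld initial} together with $a(E_w;A,(f))=a(E_w;A,(\mathrm{in}_wf))$), and your case split by the multiplicity follows the same skeleton as the paper. But there are two genuine gaps. First, in the multiplicity-$3$ case with $\mathrm{mld}(0;A,(\mathrm{in}_{(1,1,1)}f))<0$ you deduce $\mathrm{mld}(0;A,(f))\le\mathrm{mld}(0;A,(f_3))=-\infty$ from your ``second fact'', but that fact (semicontinuity under the degeneration to the initial form) gives the \emph{opposite} inequality $\mathrm{mld}(0;A,(f))\ge\mathrm{mld}(0;A,(f_3))$, and the toric divisor is of no help here since $a(E_{(1,1,1)};A,(f))=0$. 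What is needed is that for a triple point the mld is unchanged when passing to the degree-$3$ initial form; this is a nontrivial finite-determinacy statement which the paper imports from \cite[Corollary 5.16]{I} in Lemma~\ref{slc order 3}, and nothing in your proposal supplies it --- without it you cannot exclude that adding higher-order terms to a non-lc cubic cone produces a log canonical pair.

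Second, in the core case $\mathrm{in}_{(1,1,1)}f=x^2$ you reduce everything to a double-point classification in characteristics $2,3$ and to ``the principle that the mld of a quasi-homogeneous singularity is attained by a monomial valuation'', and then grant these inputs. Those inputs are where the theorem lives: there is no citable monomial-valuation principle of this kind in arbitrary characteristic, and the paper never uses one. Instead, for each boundary initial form (e.g.\ $x^2+xyz+\cdots$, $x^2+y^2z^2$, $x^2+y(y-z^2)(y-\delta z^2)$, and their characteristic-$2$ analogues in Lemma~\ref{ord 4 chk = 2}) it certifies $\mathrm{mld}\ge 0$ by Fedder's criterion plus Hara--Watanabe ($F$-purity implies log canonicity), by Hirokado's equations for simple elliptic singularities, by Artin's rational double point list, or by Liu--Rollenske in characteristic $0$, and only then closes the sandwich with $a(E_w)=0$ or $1$; the non-lc cases are settled by exhibiting, after explicit normal-form reductions, a listed weight with $a(E_w;A,(f))<0$. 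Likewise the semicontinuity inequality itself must be proved (the paper does this through jet schemes and Eisenbud's flat degeneration to the initial ideal), not quoted. So the outline is right, but the triple-point determinacy and the positive-characteristic lc-certification of the weighted normal forms --- the actual content of the proof --- are missing.
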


In \cite{I}, Ishii posed the following conjecture, which is a special case
of  Musta\c{t}\v{a}-Nakamura's conjecture (\cite{MN}).

\begin{conj}[Conjecture $D_n$] 
 For  $n\in \mathbb N$, there exists $M_n\in \mathbb N$ depending only on $n$
such that 
for any $n$-dimensional variety $X$ and a closed point $x\in X$ with a closed immersion $X\subset A$ around $x$ into a smooth variety $A$ of dimension $\mathrm{emb}(X,x)\le 2n$,
 there exists a prime divisor $E$ over $A$ with the
center at $x$ 
and $k_E \leq M_n$ such that
$$\left\{\begin{array}{l}
a(E;  A, I_X^c)= \mathrm{mld} (0; A, I_X^c)\geq 0,  \ {\mbox or}\\
a (E; A, I_X^c)< 0\ \ {\mbox if }\ \   \mathrm{mld} (0; A, I_X^c)= - \infty. \\
 \end{array}
 \right.$$
 Here, $\mathrm{emb}(X,x)$ is the embedding dimension of $X$ at $x$, $I_X$ is the defining ideal of $X$ and $c=\mathrm{emb}(X,x)-n$.

\end{conj}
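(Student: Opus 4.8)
The plan is to establish Conjecture $D_n$ in the case $n=2$ by a case analysis on the embedding dimension $e:=\mathrm{emb}(X,x)\in\{2,3,4\}$, producing in each case an explicit prime divisor $E$ over $A$ with center $x$ such that $a(E;A,I_X^c)$ equals $\mathrm{mld}(0;A,I_X^c)$ when that number is nonnegative and is strictly negative when it is $-\infty$, and such that $k_E$ is at most an absolute constant. If $e=2$ then $X$ is smooth at $x$, $c=0$, the pair is $(A,\mathcal O_A)$ with $\mathrm{mld}(0;A)=2$, and the exceptional divisor $E_0$ of the blow-up of $A$ at $x$ satisfies $a(E_0;A)=2=\mathrm{mld}(0;A)$ and $k_{E_0}=1$.

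Now suppose $e=3$, so that $X$ is a hypersurface; after completing $\mathcal O_{A,x}\cong k[[x,y,z]]$ we write $X=\mathrm{Spec}\,k[[x,y,z]]/(f)$ with $c=1$ and apply Theorem \ref{Main Theorem of mld}. It provides a $k$-automorphism $\phi$ and a weight $w$ among the ten explicit triples with $\mathrm{mld}(0;A,(f))=\mathrm{mld}(0;A,(\mathrm{in}_w\phi(f)))$, and, provided we are \emph{not} in the exceptional situation ``$\mathrm{ord}_{(1,1,1)}f=3$ and $\mathrm{mld}(0;A,(\mathrm{in}_{(1,1,1)}f))<0$'', it asserts that the toric divisor $E_w$ computes $\mathrm{mld}(0;A,(\phi(f)))$. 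Transporting $E_w$ through the automorphism induced by $\phi$ — which fixes $0$ and preserves log discrepancies and $k$-values — yields a prime divisor $E$ over $A$ with center $x$, with log discrepancy $w_1+w_2+w_3$ for the smooth $A$, computing $\mathrm{mld}(0;A,(f))$; thus $k_E=w_1+w_2+w_3-1\le 21+14+6-1=40$. If $\mathrm{mld}(0;A,(f))\ge 0$ this $E$ realizes the first alternative of $D_2$; if $\mathrm{mld}(0;A,(f))=-\infty$ (cases (viii)--(x)) then $a(E;A,(f))<0$ and it realizes the second. It is exactly the uniformity of Theorem \ref{Main Theorem of mld} — including the weights $(10,5,4),(15,8,6),(21,14,6)$ that are relevant only in small characteristic, where $x^2+y^4+z^5$- and $x^2+y^3+z^7$-type forms are no longer equivalent to an Arnol'd-type normal form — that makes this bound possible.

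Two sub-cases remain. For the exceptional situation within $e=3$ (so $\mathrm{ord}_0 f=3$ and the cubic initial form $g:=\mathrm{in}_{(1,1,1)}f$ has $(A,(g))$ non-log-canonical), I would first show $\mathrm{mld}(0;A,(f))=-\infty$: since for a hypersurface of multiplicity $3=\dim A$ the blow-up $E_0$ of $x$ is a log canonical place, the inversion-of-adjunction analysis at $E_0$ shows that log canonicity of $(A,(f))$ at $0$ would force log canonicity of $(\mathbb P^2,V(g))$, contradicting the non-log-canonicity of the cone over the cubic curve $V(g)$; and I would then produce a bad divisor of bounded $k_E$ either by coning up through $E_0$ a divisor witnessing the non-log-canonicity of the (bounded family of) non-lc plane cubics, or directly by a weighted blow-up of small weights (for $g$ a multiple plane the one delicate point is a term of $f$ of degree $\ge 4$ of pure shape $y^bz^c$, which is precisely why the weight $(3,2,2)$ — under which every binary quartic in $y,z$ has order $8$ while $x^3$ has order $9$ and every other monomial of $f$ has order $>8$ — does the job). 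For $e=4$ we have $c=2$ and $e=\dim A$, so $I_X$ has no linear part and $\mathrm{ord}_0 I_X\ge 2$, whence $a(E_0;A,I_X^2)=4-2\,\mathrm{ord}_0 I_X\le 0$ and, since an mld is $\ge 0$ or $-\infty$, $\mathrm{mld}(0;A,I_X^2)\in\{0,-\infty\}$. If the value is $0$, or if it is $-\infty$ and $\mathrm{ord}_0 I_X\ge 3$, then $E_0$ already works, with $k_{E_0}=3$. Otherwise $\mathrm{ord}_0 I_X=2$ and on $A'=\mathrm{Bl}_0 A$ the pair with boundary $E_0$ and coefficient-$2$ part $\widetilde{I_X}$ fails to be log canonical at some point $p$ of the quadratically-defined curve $E_0\cap V(\widetilde{I_X})\subset E_0\cong\mathbb P^3$; blowing up $p$ (with weights if necessary) and using that the coefficient $2$ makes the failure visible already at the next blow-up — $a(E_1;A',E_0+2\,\widetilde{I_X})=3-2\,\mathrm{ord}_p\widetilde{I_X}$, which is negative once $\mathrm{ord}_p\widetilde{I_X}\ge 2$, the residual case $\mathrm{ord}_p\widetilde{I_X}=1$ reducing by a slice to a hypersurface in a $3$-fold carrying a coefficient-$2$ divisor — gives a divisor with $a<0$ and bounded $k_E$, the composition with $E_0$ adding only a fixed amount.

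Collecting the bounds, $M_2=40$ works, the constants arising in the two sub-cases above being smaller. The reduction to the three embedding dimensions and the application of Theorem \ref{Main Theorem of mld} are routine; the main obstacle is the careful treatment of the two ``$\mathrm{mld}=-\infty$ with a borderline first blow-up'' sub-cases — the exceptional cubic case for $e=3$ and the $\mathrm{ord}_0 I_X=2$ case for $e=4$ — where one must build the witnessing divisor by hand and verify that the positive-characteristic phenomena already isolated by Theorem \ref{Main Theorem of mld} do not force a larger constant.
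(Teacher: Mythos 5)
Your treatment of the hypersurface double-point case coincides with the paper's: Theorem \ref{D_2 arbitrary} also applies Proposition \ref{key prop} (equivalently Theorem \ref{Main Theorem of mld}) to get a toric divisor with $k_E\le 40$ there. But the paper does \emph{not} attempt to reprove the remaining cases: for every point that is not a hypersurface double point it invokes Cases 1--5 of the proof of Theorem 6.3 in \cite{I} together with Remark 6.6 there, which is exactly where the constant $58$ (rather than $40$) comes from and which is valid in arbitrary characteristic; the only place \cite{I} needed $\mathrm{ch}\,k\neq 2$ was the double point case that this paper repairs. You instead try to handle the multiplicity-$3$ hypersurface case with non-lc cubic initial form and the embedding-dimension-$4$ case by hand, and it is precisely in those two sub-cases that your argument has genuine gaps.

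Concretely: (i) in the exceptional cubic case, knowing $\mathrm{mld}(0;A,(f))=-\infty$ (which already follows from Lemma \ref{slc order 3}) is not the issue; the issue is exhibiting a divisor with $a(E;A,(f))<0$ and bounded $k_E$. A divisor witnessing non-log-canonicity of the initial form $g=\mathrm{in}_{(1,1,1)}f$ is not automatically bad for $f$ — Proposition \ref{mld initial} only gives $\mathrm{mld}(0;A,(f))\ge\mathrm{mld}(0;A,(\mathrm{in}_wf))$ — unless the divisor is toric for the weight $w$ being used, so ``coning up through $E_0$ a witness for the non-lc plane cubic'' must control the higher-order terms of $f$ after a characteristic-free normal-form reduction; your sketch carries this out only for the multiple-plane subcase (the $(3,2,2)$ computation), leaving e.g. initial forms of type $x^2y$, line-plus-conic-tangent, cuspidal cubic cones, etc., untreated, and this is exactly the kind of case analysis Section 3 performs only for multiplicity $2$. (ii) In the $e=4$, $\mathrm{ord}_0I_X=2$, $\mathrm{mld}=-\infty$ case, the assertion that ``the coefficient $2$ makes the failure visible already at the next blow-up'' is false as a general principle — non-lc-ness of $(A',E_0+2\widetilde{I_X})$ at $p$ need not be detected by the blow-up of $p$, which is the whole difficulty of the conjecture — and your residual case is reduced to an unproven bounded-divisor statement for a $3$-fold pair carrying a boundary divisor plus an ideal with exponent $2$ (note the paper's final example warns that the initial-term method breaks down for exponents $\neq 1$). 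Consequently the claimed bound $M_2=40$ is not established; as written, only the double-point part of your argument is complete, and the other cases must either be supplied with the missing analysis or, as in the paper, delegated to \cite{I}, which yields $M_2\le 58$.
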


It is known that this conjecture  bounds the  number of blow-ups to
obtain a prime divisor computing the minimal log discrepancy.
In \cite{I},  Conjecture $D_1$ is proved  for arbitrary characteristic and one can take   $M_1=4$.
On the other hand,  Conjecture $D_2$ is also proved in \cite{I} for characteristic $p\neq 2$ and one can take  $M_2\leq 58$ in this case.

In this paper, we prove the same result for arbitrary characteristic, as an application of Theorem \ref{Main Theorem of mld}.

 \begin{thm}
  Conjecture $D_2$ holds and we can take  $M_2\leq 58$ in arbitrary characteristic.
\end{thm}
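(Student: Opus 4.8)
The plan is to deduce Conjecture $D_2$ from Theorem \ref{Main Theorem of mld} by a reduction-to-the-hypersurface-case argument. Let $X$ be a $2$-dimensional variety, $x\in X$ a closed point, and $X\subset A$ a closed immersion around $x$ into a smooth variety $A$ with $\dim A = \mathrm{emb}(X,x)\le 4$. We may complete and assume $A = \mathrm{Spec}\,k[[x_1,\dots,x_m]]$ with $m\le 4$. First I would handle the easy cases: if $\mathrm{emb}(X,x)=2$ then $X=A$ is smooth at $x$ and $\mathrm{mld}(0;A,I_X^c)=2$, computed by the blow-up of the point; if $\mathrm{emb}(X,x)=3$ then $c=1$ and $X$ is a hypersurface in a $3$-dimensional regular local ring, so Theorem \ref{Main Theorem of mld} applies directly after choosing coordinates $k[[x,y,z]]$.

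For the remaining (and main) case $\mathrm{emb}(X,x)=4$, $c=2$, the defining ideal $I_X\subset k[[x_1,x_2,x_3,x_4]]$ has height $2$. Here I would invoke the classification/structure of two-dimensional (semi-)log-canonical, or more generally mld-controlled, surface singularities of embedding dimension $4$: by Ishii's analysis in \cite{I} (this is precisely where the characteristic $p\ne 2$ hypothesis entered her argument), one reduces to a finite list of normal forms, and in each case one exhibits a weighted blow-up, i.e.\ a toric divisor $E_w$ over $A$, computing $\mathrm{mld}(0;A,I_X^c)$ (or yielding $a(E_w;A,I_X^c)<0$ when the mld is $-\infty$), with $k_{E_w}$ bounded by an explicit constant. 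The point of the present paper is that Theorem \ref{Main Theorem of mld}, being characteristic-free, removes the obstruction to carrying out the embedding-dimension-$3$ substep that Ishii needed in characteristic $0$; I would show that every place in her embedding-dimension-$4$ argument where she appealed to the characteristic-$0$ hypersurface classification can now be replaced verbatim by Theorem \ref{Main Theorem of mld}, so the same normal forms and the same weighted blow-ups go through, and the same numerical bound $k_{E_w}\le 58$ is obtained.

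Concretely, the key steps in order are: (1) dispose of $\mathrm{emb}(X,x)\le 3$ using Theorem \ref{Main Theorem of mld} and the smooth case; (2) for $\mathrm{emb}(X,x)=4$, choose a general hyperplane section or a suitable projection to reduce a height-$2$ ideal to a hypersurface situation, or else directly apply the classification of surface singularities of multiplicity controlled by the mld; (3) in each resulting normal form, write down the toric valuation $E_w$ (one of the explicit weight vectors from Theorem \ref{Main Theorem of mld}, or a short weighted blow-up for the embedding-codimension-$2$ models) and compute $a(E_w;A,I_X^c)$ and $k_{E_w}$; (4) take $M_2$ to be the maximum of the finitely many $k_{E_w}$ arising, and check that this maximum is $\le 58$, exactly as in \cite{I}. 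The bookkeeping that $M_2\le 58$ is then identical to the char-$0$ computation, because the list of weights and the discrepancy formulas are the same.

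The main obstacle I anticipate is step (2)–(3) in the embedding-dimension-$4$ case: one must be sure that every invocation of the characteristic-$0$ hypersurface classification in Ishii's proof is genuinely covered by Theorem \ref{Main Theorem of mld} — in particular that the pathological weight $w=(1,1,1)$ with $\mathrm{ord}_{(1,1,1)}f=3$ and $\mathrm{mld}(0;A,(\mathrm{in}_{(1,1,1)}f))<0$, where the theorem does \emph{not} assert that $E_w$ computes the mld, either does not occur in the reduction or can be handled separately (e.g.\ there $\mathrm{mld}=-\infty$ and any divisor with negative discrepancy suffices, which a low-level blow-up provides). Checking this case analysis, and confirming that no new normal form appears in characteristic $2$ or $3$ that would force a larger bound, is the delicate part; everything else is a transcription of the characteristic-$0$ argument with Theorem \ref{Main Theorem of mld} substituted for its characteristic-$0$ predecessor.
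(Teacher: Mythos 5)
Your reduction is pointed in the wrong direction, and this creates a genuine gap. In Ishii's proof of Conjecture $D_2$ in \cite{I}, the hypothesis $\mathrm{ch}\,k\neq 2$ enters only in the treatment of hypersurface \emph{double points} (embedding dimension $3$, multiplicity $2$), where one needs normal forms of degenerate quadratic initial terms; her Cases 1--5 in the proof of Theorem 6.3, which cover the embedding-dimension-$4$ singularities as well as hypersurfaces that are not double points, are already valid in arbitrary characteristic (this is exactly what \cite[Remark 6.6]{I} records). Accordingly, the paper's proof of Theorem \ref{D_2 arbitrary} is short: if $\widehat{\mathcal O}_{X,x}\cong k[[x,y,z]]/(f)$ with $\mathrm{ord}_{(1,1,1)}f=2$, Proposition \ref{key prop} (whose characteristic-$2$ content is Lemma \ref{ord 4 chk = 2}) produces a toric divisor $E_w$ computing $\mathrm{mld}(0;A,(f))$ with $k_{E_w}=w\cdot(1,1,1)-1\le 40$; in every other case one simply quotes Ishii's characteristic-free Cases 1--5 to get $M_2\le 58$. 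Your plan inverts this: you treat the embedding-dimension-$3$ case as ``direct'' and propose to spend the main effort redoing the embedding-dimension-$4$ case by ``a general hyperplane section or a suitable projection to reduce a height-$2$ ideal to a hypersurface situation.'' That reduction is not available --- $\mathrm{mld}(0;A,I_X^2)$ for a codimension-$2$ ideal is not controlled by a projection to a hypersurface, and it is not how Ishii argues --- and in any case no new work is needed there.

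The place where your proposal actually leaves a hole is the case you flag but do not resolve: $\mathrm{ord}_{(1,1,1)}f=3$ with $\mathrm{mld}\big(0;A,(\mathrm{in}_{(1,1,1)}f)\big)<0$. There Theorem \ref{Main Theorem of mld} (via Lemma \ref{slc order 3}) only gives $\mathrm{mld}(0;A,(f))=\mathrm{mld}\big(0;A,(\mathrm{in}_{(1,1,1)}f)\big)=-\infty$ and explicitly does \emph{not} supply a divisor computing the minimal log discrepancy; Conjecture $D_2$ then requires a prime divisor $E$ with $a(E;A,(f))<0$ \emph{and} $k_E$ bounded by a universal constant, which is precisely the content of the conjecture in that case and not something ``a low-level blow-up provides'' for free (for an arbitrary multiplicity-$3$ series $f$ one must argue, as Ishii does in her characteristic-free cases, that such a bounded divisor exists). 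This case does occur (e.g.\ $\mathrm{in}_{(1,1,1)}f=x^2y$), so it cannot be dismissed. Without either an independent argument here or the citation of \cite[Proof of Theorem 6.3, Cases 1--5 and Remark 6.6]{I} for all non-double-point singularities, your proof of $M_2\le 58$ is incomplete; with that citation, the embedding-dimension-$4$ analysis you propose becomes unnecessary and the argument collapses to the paper's.
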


The structure of this paper is as follows: 
In Section 2 we give the definitions  and some basic properties which will be used in this paper.
In Section 3 we characterize two-dimensional semi-log canonical hypersurfaces in arbitrary characteristic
from the viewpoint of the initial term of the defining equation.
In Section 4 we give a proof of Conjecture $D_2$ in arbitrary characteristic.

\begin{ack}
The author would like to thank Professor  Shihoko Ishii  and Professor  Shunsuke Takagi for valuable conversations.
The author is partially supported by JSPS Grant-in-Aid for Early-Career Scientists 19K14496 and the Iwanami Fujukai Foundation.
\end{ack}

\noindent 
{\bf Conventions.}
Throughout this paper,  $k$ is an algebraically closed field  of arbitrary characteristic and a variety is a reduced pure-dimensional scheme of finite type over $k$ or a reduced pure-dimensional scheme $\mathrm{Spec}k[[x_1,\dots,x_n]]/I$ for some ideal $I$.

\section{Preliminaries}

In this section, we give necessary definitions and record various properties for later use.

\subsection{Minimal log discrepancies and semi-log canonical singularities}
\begin{defn}
Let $X$ be a variety.
We say that $E$ is a prime divisor over $X$, if there is a birational morphism $Y\to X$ such that $Y$ is normal and $E$ is a prime divisor on $Y$.
The closure of $f(E)\subset X$ is called the center of $E$ on $X$  and denoted by $c_X(E)$.
\end{defn}

\begin{defn}
Let $X$ be a $\mathbb Q$-Gorenstein variety which satisfies Serre's condition $S_2$ and is Gorenstein in codimension $1$, $\mathfrak a\subset \mathcal O_X$ be a non-zero ideal sheaf  and $E$ be a prime divisor over $X$.
The log discrepancy of $(X,\mathfrak a)$ at $E$ is defined as 
$$a(E;X,\mathfrak a):=k_E-\mathrm{ord}_E(\mathfrak a)+1,$$
where $k_E$ is the coefficient of the relative canonical divisor $K_{Y/X}$ at $E$.
Here $f: Y\to X$ is birational morphism with normal $Y$ such that $E$ appears on $Y$.
\end{defn}

\begin{defn}
Let $X$ be a $\mathbb Q$-Gorenstein variety which satisfies Serre's condition $S_2$ and is Gorenstein in codimension $1$ and $\mathfrak a\subset \mathcal O_X$ be a non-zero ideal sheaf.
The minimal log discrepancy of the pair $(X,\mathfrak a)$ at a  closed subset $W\subset X$ is defined as follows:
$$\mathrm{mld}(W;X,\mathfrak a)=\mathrm{inf}\{a(E;X,\mathfrak a)\ |\ E: \mbox{prime\ divisors\ over\ } X\ \mbox{with\ the\ center\ in\ } W\}$$
when $\dim X\geq2$.
When $\dim X=1$ and the right-hand side is $\ge 0$, then we define $\mathrm{mld}(W;X,\mathfrak a)$ by the
right-hand side.
Otherwise, we define $\mathrm{mld}(W;X,\mathfrak a)=-\infty$.
We shall simply write $\mathrm{mld}(W;X)$ instead of $\mathrm{mld}(W;X,\mathcal O_X)$.
\end{defn}

\begin{defn}
Let  $X$ be a normal $\mathbb Q$-Gorenstein variety, $\mathfrak a\subset \mathcal O_X$ be a non-zero ideal sheaf and  $x$ be a closed point of $X.$
$(X,\mathfrak a)$ is said to be log canonical at $x$ if 
 $a(E;X,\mathfrak a)\ge 0$ for every  prime divisor $E$ over $X$  whose center contains $x$.
\end{defn}

\begin{defn}
Let  $X$ be a $\mathbb Q$-Gorenstein variety which satisfies Serre's condition $S_2$ and is Gorenstein in codimension $1$
and $x$ be a closed point of $X.$
$X$ is said to be semi-log canonical at $x$ if 
 $a(E;X,\mathcal O_X)\ge 0$ for every  prime divisor $E$ over $X$  whose center contains $x$.
\end{defn}

\begin{defn}
A normal  surface singularity is said to be a  simple elliptic singularity  if the exceptional divisor of the minimal resolution is a smooth elliptic curve.
\end{defn}

\begin{rem}
By the definition, simple elliptic singularities are log canonical singularities.
\end{rem}

\begin{defn}
Let $X$ be a $\mathbb Q$-Gorenstein variety which satisfies Serre's condition $S_2$ and is Gorenstein in codimension $1$, $W$ be a closed subset of $X$ and $\mathfrak a\subset \mathcal O_X$ be a non-zero ideal sheaf.
We say that a prime divisor $E$ over $X$ with the center in $W$ computes $\mathrm{mld}(W;X,\mathfrak a)$ if either 
$$a(E;X,\mathfrak a)=\mathrm{mld}(W;X,\mathfrak a)\ge0\ \  \mathrm{or}\ \ a(E;X,\mathfrak a)<0.$$
\end{defn}

\subsection{F-pure rings vs. log canonical singularities}

Let $R$ be a domain of characteristic $p>0$ and $F: R\to R$ the Frobenius map which sends $x\in R$ to $x^p\in R$.
For an integer $e\ge0$, we can identify $e$-times iterated Frobenius map $F^e: R\to R^{1/p^e}$ with the natural inclusion map $R\hookrightarrow R^{1/p^e}$.
The ring $R$ is called $F$-finite, if $F:R\to R$ is a finite map.
For example, a complete local ring with perfect residue field is $F$-finite.  

\begin{defn}
Let $(R,\mathfrak m)$ be  an $F$-finite local domain and $\mathfrak a$ be an ideal of $R$. 
We say the pair $(R,\mathfrak a)$ is $F$-pure if there exist an integer $e>0$ and an element $a\in \mathfrak a^{p^e-1}$ such that the inclusion $a^{1/p^e}R\hookrightarrow R^{1/p^e}$ splits as an $R$-module homomorphism. 

\end{defn}

\begin{prop}[Fedder criterion {\cite[Proposition 2.1]{F}}, see {\cite[Corollary 2.7]{HW}}]\label{Fedder}
Let $K$ be a perfect field of characteristic $p>0$,  $R=K[[x_1,\dots,x_n]]$ and $f\in (x_1,\dots,x_n)$. 
Then $(R,(f))$ is $F$-pure if and only if $f^{p-1}\notin (x_1^{p},\dots, x_n^{p})$.
\end{prop}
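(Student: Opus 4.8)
I would reconstruct the standard proof of Fedder's criterion, in the form adapted to the definition of $F$-purity used above. Write $\mathfrak m=(x_1,\dots,x_n)$ and, for $q=p^e$, $\mathfrak m^{[q]}=(x_1^q,\dots,x_n^q)$. Since $K$ is perfect, $R^{1/q}=K[[x_1^{1/q},\dots,x_n^{1/q}]]$ is a finite free $R$-module with basis $\{x^{a/q}\ :\ 0\le a_i\le q-1\}$, and likewise $R$ is finite free over the subring $K[[x_1^{q},\dots,x_n^{q}]]$ with basis $\{x^{\alpha}\ :\ 0\le \alpha_i\le q-1\}$; consequently every $a\in R$ has a unique ``base $q$ expansion'' $a=\sum_{\alpha}c_{\alpha}^{\,q}x^{\alpha}$ with $c_{\alpha}\in R$, and then $a^{1/q}=\sum_{\alpha}c_{\alpha}x^{\alpha/q}$. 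The key preliminary step is to describe $\mathrm{Hom}_R(R^{1/q},R)$. Let $\varphi_{\beta}$ denote the $R$-basis dual to $\{x^{\beta/q}\}$, so that $\mathrm{Hom}_R(R^{1/q},R)=\bigoplus_{\beta}R\varphi_{\beta}$, and let $R^{1/q}$ act on this module by pre-composition. A direct computation on monomials gives $x^{c/q}\cdot\varphi_{(q-1,\dots,q-1)}=\varphi_{(q-1,\dots,q-1)-c}$ for $0\le c_i\le q-1$, so all the $\varphi_{\beta}$ are $R^{1/q}$-multiples of $\Phi_e:=\varphi_{(q-1,\dots,q-1)}$, i.e.\ $\Phi_e$ generates $\mathrm{Hom}_R(R^{1/q},R)$ as an $R^{1/q}$-module (which is in fact free of rank one, $R^{1/q}$ being regular). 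Hence every $R$-linear map $R^{1/q}\to R$ has the form $\eta\mapsto\Phi_e(u^{1/q}\eta)$ for some $u\in R$, and $\Phi_e(a^{1/q})=c_{(q-1,\dots,q-1)}$ in the notation above.

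Next I would rephrase $F$-purity. For $0\neq a\in R$, the inclusion $a^{1/q}R\hookrightarrow R^{1/q}$ splits as an $R$-module map if and only if some $\psi\in\mathrm{Hom}_R(R^{1/q},R)$ has $\psi(a^{1/q})\in R^{\times}$ (rescale by a unit to arrange $\psi(a^{1/q})=1$). Writing $a=vf^{q-1}$ and $\psi(\eta)=\Phi_e(u^{1/q}\eta)$ and absorbing $u$ into $v$, this shows $(R,(f))$ admits a splitting at level $e$ precisely when there is $w\in R$ with $\Phi_e\bigl((wf^{q-1})^{1/q}\bigr)\in R^{\times}$, that is, with the constant term of the coefficient $c_{(q-1,\dots,q-1)}$ in the base $q$ expansion of $wf^{q-1}$ nonzero; raising to the $q$-th power and using that $K$ is perfect, this is the same as the coefficient of the monomial $x_1^{q-1}\cdots x_n^{q-1}$ in $wf^{q-1}$ being nonzero. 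Since $\mathfrak m^{[q]}$ is a monomial ideal not containing $x_1^{q-1}\cdots x_n^{q-1}$, no such $w$ exists when $f^{q-1}\in\mathfrak m^{[q]}$; conversely, if $f^{q-1}\notin\mathfrak m^{[q]}$, choose a monomial $x^{\beta}$ occurring in $f^{q-1}$ with nonzero coefficient $c$ and all $\beta_i\le q-1$, and put $w=x_1^{q-1-\beta_1}\cdots x_n^{q-1-\beta_n}$: then the coefficient of $x_1^{q-1}\cdots x_n^{q-1}$ in $wf^{q-1}$ is exactly $c\neq 0$, there being no cancellation because $wx^{\beta'}=x_1^{q-1}\cdots x_n^{q-1}$ forces $\beta'=\beta$. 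Hence $(R,(f))$ has a splitting at level $e$ if and only if $f^{p^e-1}\notin(x_1^{p^e},\dots,x_n^{p^e})$.

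Finally I would make the level irrelevant: $f^{p^e-1}\notin(x_1^{p^e},\dots,x_n^{p^e})$ for some $e\ge 1$ if and only if $f^{p-1}\notin(x_1^{p},\dots,x_n^{p})$. One implication is the case $e=1$. For the other, argue by contraposition: if $f^{p-1}=\sum_i x_i^{p}g_i$, then since $p$-th powers are additive in characteristic $p$ we get $(f^{p-1})^{p^{e-1}}=\sum_i x_i^{p^e}g_i^{\,p^{e-1}}\in(x_1^{p^e},\dots,x_n^{p^e})$, and as $p^e-1=(p-1)(1+p+\cdots+p^{e-1})$ this yields $f^{p^e-1}=(f^{p-1})^{p^{e-1}}(f^{p-1})^{1+p+\cdots+p^{e-2}}\in(x_1^{p^e},\dots,x_n^{p^e})$ for every $e\ge 1$. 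Combining this with the previous paragraph and the definition of $F$-purity (that a splitting exists at some level) gives the stated equivalence.

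The step I expect to be the main obstacle is the first one: the whole argument hinges on identifying $\mathrm{Hom}_R(R^{1/q},R)$ as a rank-one free $R^{1/q}$-module with the explicit ``top-coefficient'' generator $\Phi_e$, which is exactly where the regularity of $R$ enters (through the freeness of $R^{1/q}$ over $R$ and the factoriality/Gorenstein property of $R^{1/q}$); once this is in hand, the remaining steps are bookkeeping with base $q$ expansions and monomial ideals. A smaller point needing care — though elementary — is the level-independence, which works only because $(x_1^p,\dots,x_n^p)$ is a Frobenius power of $\mathfrak m$ and hence behaves well under taking $p$-th powers.
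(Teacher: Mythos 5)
The paper offers no proof of this proposition: it is imported as a known result of Fedder (see also Hara--Watanabe, Corollary 2.7), so there is no internal argument to compare yours with. Your reconstruction is correct and complete, and it is essentially the classical proof. The monomial computation $x^{c/q}\cdot\varphi_{(q-1,\dots,q-1)}=\varphi_{(q-1,\dots,q-1)-c}$ does show that $\Phi_e$ generates $\mathrm{Hom}_R(R^{1/q},R)$ as an $R^{1/q}$-module; the translation of the splitting condition into ``the coefficient of $x_1^{q-1}\cdots x_n^{q-1}$ in $wf^{q-1}$ is nonzero'' is right, since in the base-$q$ expansion only the term with $\alpha=(q-1,\dots,q-1)$ can contribute to that coefficient, and $K$ perfect lets you pass between the constant term of $c_{(q-1,\dots,q-1)}$ and its $q$-th power; the choice $w=x^{(q-1,\dots,q-1)-\beta}$ with no possible cancellation settles the converse direction; and the reduction from level $e$ to level $1$ via $p^e-1=(p-1)(1+p+\cdots+p^{e-1})$ together with $\bigl(\sum_i x_i^p g_i\bigr)^{p^{e-1}}=\sum_i x_i^{p^e}g_i^{p^{e-1}}$ is the standard one. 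Two minor remarks: the statement tacitly assumes $f\neq 0$ (for $f=0$ one must read the splitting condition, as you implicitly do, as the existence of $\psi$ with $\psi(a^{1/p^e})=1$, under which both sides of the equivalence fail); and the freeness of $\mathrm{Hom}_R(R^{1/q},R)$ over $R^{1/q}$, which you mention parenthetically, is never actually needed --- generation by $\Phi_e$ suffices, as your own argument shows.
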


\begin{defn}
Let $K$ be a perfect field of characteristic $p>0$, $A=\mathrm{Spec} K[[x_1,\dots,x_n]]$,  $f\in (x_1,\dots,x_n)$ and $0$ be the origin of $A$.
Then we say $(A,(f))$ is $F$-pure at the origin if the pair $(\mathcal O_{A,0},(f))$ is $F$-pure.
\end{defn}

\begin{thm}[{\cite[Theorem 3.3]{HW}}]\label{HW thm}
Let $K$ be a perfect field of characteristic $p>0$, $A=\mathrm{Spec} K[[x_1,\dots,x_n]]$ and  $f\in (x_1,\dots,x_n)$.
If $(A,(f))$ is $F$-pure at the origin, then $(A,(f))$ is log canonical at the origin.
\end{thm}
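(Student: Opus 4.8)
The plan is to route the argument through the $F$-pure threshold and its comparison with the log canonical threshold. Write $R=K[[x_1,\dots,x_n]]$, $\mathfrak m=(x_1,\dots,x_n)$ and $\mathfrak m^{[p^{e}]}=(x_1^{p^{e}},\dots,x_n^{p^{e}})$, and for $f\in\mathfrak m\setminus\{0\}$ put $\nu_e(f)=\max\{m\ge 0 : f^{m}\notin\mathfrak m^{[p^{e}]}\}$, which is finite because $\mathfrak m^{[p^{e}]}\supseteq\mathfrak m^{n(p^{e}-1)+1}$, and $\mathrm{fpt}(f)=\lim_{e\to\infty}\nu_e(f)/p^{e}$ (the limit exists since $\nu_{e+1}(f)\ge p\,\nu_e(f)$, using $\big(\sum c_\alpha x^\alpha\big)^{p}=\sum c_\alpha^{p}x^{p\alpha}$ in characteristic $p$). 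The proof then splits into two steps: (1) $F$-purity of $(A,(f))$ at the origin forces $\mathrm{fpt}(f)\ge 1$; (2) $\mathrm{fpt}(f)\le\mathrm{lct}_0(f)$, so that $\mathrm{fpt}(f)\ge 1$ yields that $(A,(f))$ is log canonical at the origin.

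For Step 1, Proposition \ref{Fedder} gives $f^{p-1}\notin\mathfrak m^{[p]}$; I would prove the stronger statement $f^{p^{e}-1}\notin\mathfrak m^{[p^{e}]}$ for every $e\ge 1$, whence $\nu_e(f)\ge p^{e}-1$ and $\mathrm{fpt}(f)\ge\lim_e(p^{e}-1)/p^{e}=1$. The key point is that a splitting of the pair composes with its Frobenius twists: if $\varphi\colon R^{1/p}\to R$ is $R$-linear with $\varphi\big((uf^{p-1})^{1/p}\big)=1$ for some $u\in R$, then an easy induction---using $\varphi(r\cdot a^{1/p})=r$ for $r\in R$, where $a=uf^{p-1}$---shows that $\varphi\circ\varphi^{1/p}\circ\cdots\circ\varphi^{1/p^{e-1}}\colon R^{1/p^{e}}\to R$ carries $\big(u^{(p^{e}-1)/(p-1)}f^{p^{e}-1}\big)^{1/p^{e}}$ to $1$, and by the level-$e$ form of Fedder's criterion this is equivalent to $f^{p^{e}-1}\notin\mathfrak m^{[p^{e}]}$. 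Note that the weaker monotonicity $\nu_{e+1}\ge p\,\nu_e$ alone would yield only $\mathrm{fpt}(f)\ge 1-1/p$, so genuinely passing to all levels is needed.

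For Step 2, since $R$ is regular and $F$-finite, for every rational $t$ with $0\le t<\mathrm{fpt}(f)$ the pair $(R,(f)^{t})$ is strongly $F$-regular, so its test ideal satisfies $\tau\big((f)^{t}\big)=R$; combined with the containment $\tau\big((f)^{t}\big)\subseteq\mathcal J\big((f)^{t}\big)$ of the test ideal in the multiplier ideal, this gives $\mathcal J\big((f)^{t}\big)=R$, i.e.\ $(A,(f)^{t})$ is log terminal at the origin for all $t<\mathrm{fpt}(f)$. Since $\mathrm{fpt}(f)\ge 1$ by Step 1, this holds for every $t<1$, and as log terminality for all $t<1$ is equivalent to log canonicity at $t=1$, we conclude that $(A,(f))$ is log canonical at the origin. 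The main obstacle is Step 2: the inequality $\mathrm{fpt}\le\mathrm{lct}$ is where the real content lies, resting on the delicate relation between the Frobenius powers $\mathfrak m^{[p^{e}]}$ and multiplier ideals and on the existence of a log resolution of the hypersurface $V(f)$---harmless in the two- and three-dimensional cases actually used in this paper, and in general avoidable by working throughout with divisorial valuations over the regular local ring $R$, all of which arise from finite sequences of blow-ups along regular centres by Zariski's theory. Step 1, by comparison, is routine once the iterated Frobenius splitting is written out.
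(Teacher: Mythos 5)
The paper itself offers no argument for this statement: it is quoted as \cite[Theorem 3.3]{HW}, and Hara--Watanabe's proof is a direct, divisor-by-divisor one --- for a single proper birational morphism $\pi\colon Y\to A$ with a prime divisor $E$ on $Y$, the Frobenius splitting is transported to $Y$ and evaluated in the discrete valuation ring $\mathcal O_{Y,E}$ to give $k_E+1-\mathrm{ord}_E(f)\ge 0$; no thresholds, test ideals, multiplier ideals or resolutions enter. Within your proposal, Step 1 is correct: the iterated-splitting argument does yield $f^{p^e-1}\notin\mathfrak m^{[p^e]}$ for every $e$, hence $\mathrm{fpt}(f)\ge 1$ (the auxiliary unit $u$ is unnecessary but harmless).

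The gap is in Step 2, and it sits exactly where the content of the theorem lies. The containment $\tau\big((f)^{t}\big)\subseteq\mathcal J\big((f)^{t}\big)$ in a \emph{fixed} characteristic $p$ is not formal: the known proofs (Hara, Hara--Yoshida, Takagi) require a log resolution of $(A,\mathrm{div}\,f)$ together with a duality/Frobenius-action argument on that resolution, i.e.\ a result at least as deep as the statement you are trying to prove. Your proposed way of dispensing with resolutions --- ``work with divisorial valuations, all of which arise from finite sequences of blow-ups along regular centres by Zariski's theory'' --- does not hold up: Zariski's theorem (the version the paper quotes from \cite[Lemma 2.45]{KM}) reaches $E$ by blowing up centres that need not be regular and then normalizing, and in any case merely exhibiting $E$ by blow-ups gives no inequality between $\mathrm{ord}_E(f)$ and $k_E+1$. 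To get the per-divisor bound $\mathrm{fpt}(f)\cdot\mathrm{ord}_E(f)\le k_E+1$ you need a further input --- for instance that $\mathrm{ord}_E(g)\ge p^{e}(k_E+1)$ forces $g\in\mathfrak m^{[p^{e}]}$, or the trace-map computation of \cite{HW} itself --- and none is supplied. So as written the argument either silently invokes a comparison theorem stronger than the statement at issue, or leaves its key inequality unproved. If you are willing simply to cite $\mathrm{fpt}\le\mathrm{lct}$ (legitimate in the three-dimensional situation the paper actually uses, where embedded resolution is available), the route does work, but it is then no more elementary than citing \cite[Theorem 3.3]{HW} directly, which is what the paper does.
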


\subsection{Initial term with respect to $w$}

\begin{defn}
Let $w\in \mathbb N^n$,
 $f=\sum a_mx^m\in k[[x_1,\dots,x_n]]\setminus\{0\}$ and $\mathfrak a$ be an ideal of $k[[x_1,\dots,x_n]]$.
Then we  define 
$$\mathrm{supp} f=\{m\in \mathbb Z^n_{\ge 0}\ |\ a_m\neq 0\},$$
$$\mathrm{ord}_w f =\mathrm{min}\{w\cdot m\ |\ m\in\mathrm{supp}f\},$$ where the dot product $\cdot$ denotes the standard inner product on $\mathbb R^n$.
Then we define the initial term of $f$ with respect to $w$ to be 
$$\mathrm{in}_wf:=\sum_{m\in\mathrm{supp}f,\ w\cdot m=\mathrm{ord}_wf}a_mx^m.$$
We define $\mathrm{ord}_w 0=\infty$ and $\mathrm{in}_w 0=0$. 
The initial ideal of $\mathfrak a$ with respect to $w$ is defined by
$$\mathrm{in}_w\mathfrak a=\{\mathrm{in}_wf\ |\ f\in \mathfrak a\}.$$
\end{defn}

%\begin{rem}
%In some articles, the initial term of $f$  is defined to be the sum of the terms of $f$ whose wight with respect to $w$ is maximal. 
%However the difference is just superficial because $\mathrm{in}_{-w}f$ is the sum of the terms of $f$ whose wight with respect to $w$ is maximal.
%\end{rem}

\subsection{Arc spaces and minimal log discrepancies}
We briefly review in this section the  results of arc spaces and minimal log discrepancies in \cite{IR2}.
For simplicity, we consider only the case when a scheme is $\mathrm{Spec} k[[x_1,\dots,x_n]]$.
We remark that  in \cite{IR2}, we assume that  a scheme is of finite type over a field. 
However, the proofs in \cite{IR2} also work for $\mathrm{Spec} k[[x_1,\dots,x_n]]$.

\begin{defn}
Let $f\in (x_1,\dots,x_n)$ be an element of $k[[x_1,\dots,x_n]]$.
Then 
$$f^{(m)}\in k[[x_i^{(j)}\ |\ i=1,\dots,n,j=0,\dots,m]],$$
is defined as follows:
$$f\Big(\sum_{j=0}^\infty x_1^{(j)}t^j,\dots,\sum_{j=0}^\infty x_n^{(j)}t^j\Big)=\sum_{j=0}^\infty f^{(j)}t^j.$$
\end{defn}

\begin{defn}
Let $\mathfrak a=(f_1,\dots,f_r)\subset (x_1,\dots,x_n)$ be an ideal of $k[[x_1,\dots,x_n]]$,  $A=\mathrm{Spec} k[[x_1,\dots,x_n]]$ and $0$ be the origin of $A$.
Then the scheme $$A_\infty=\mathrm{Spec} k[[x_i^{(j)}\ |\ i=1,\dots,n,j\in\mathbb Z_{\ge 0} ]]$$ is called the arc space of $A$.
Then we define for $m\in \mathbb Z_{\ge 0}$,
$$\mathrm{Cont}^{\ge m+1}(\mathfrak a)=\mathrm{Spec} k[[x_i^{(j)}\ |\ i=1,\dots,n,j\in\mathbb Z_{\ge 0}]]/(f_s^{(l)}\ |\ 1\le s\le r,\ 0\le l\le m).$$
\end{defn}

\begin{thm}[{\cite[Theorem 7.4]{EM}}, {\cite[Theorem 3.18]{IR2}}]\label{mld contact in IR}
Let $\mathfrak a=(f_1,\dots,f_r)\subset (x_1,\dots,x_n)$ be a non-zero ideal of $k[[x_1,\dots,x_n]]$,  $A=\mathrm{Spec} k[[x_1,\dots,x_n]]$ and $0$ be the origin of $A$.
Then
\[\mathrm{mld}(0;A,\mathfrak a)=\inf_{m\in \mathbb N}\Big\{\mathrm{codim}\Big(\mathrm{Cont}^{\ge m}(\mathfrak a)\cap\mathrm{Cont}^{\ge 1}((x_1,\dots,x_n)),A_\infty\Big)-m\Big\}.\]

\end{thm}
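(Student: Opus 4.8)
The plan is to translate both sides of the claimed identity into infima indexed by divisorial valuations of $k((x_1,\dots,x_n))$ centered at the origin, and then match them term by term. Write $\mathfrak{m}=(x_1,\dots,x_n)$ and, for a prime divisor $E$ over $A$ with center $0$ and a positive integer $q$, let $C_A(q\cdot\mathrm{ord}_E)\subset A_\infty$ be the maximal divisorial set attached to the valuation $q\cdot\mathrm{ord}_E$, that is, the closure of the set of arcs whose associated valuation on the generic point is $q\cdot\mathrm{ord}_E$. The single technical input I would isolate at the outset is the codimension formula
$$\mathrm{codim}\big(C_A(q\cdot\mathrm{ord}_E),A_\infty\big)=q\,(k_E+1)=q\,a(E;A,\mathcal{O}_A),$$
which holds in arbitrary characteristic precisely because $A$ is smooth. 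This is the place where I would invoke the arc-space machinery of \cite{IR2}; it replaces the change-of-variables computation on a log resolution that is available, via \cite{EM}, only in characteristic $0$.

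Granting this formula, I would first prove $\inf_m\big(\mathrm{codim}(C_m,A_\infty)-m\big)\le\mathrm{mld}(0;A,\mathfrak{a})$, where $C_m=\mathrm{Cont}^{\ge m}(\mathfrak{a})\cap\mathrm{Cont}^{\ge 1}(\mathfrak{m})$. Given any prime divisor $E$ over $A$ centered at $0$, the cylinder $C_A(\mathrm{ord}_E)$ lies in $\mathrm{Cont}^{\ge 1}(\mathfrak{m})$ since $\mathrm{ord}_E(\mathfrak{m})\ge 1$, and in $\mathrm{Cont}^{\ge m_0}(\mathfrak{a})$ for $m_0=\mathrm{ord}_E(\mathfrak{a})$. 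Hence $\mathrm{codim}(C_{m_0},A_\infty)\le k_E+1$, so $\mathrm{codim}(C_{m_0},A_\infty)-m_0\le(k_E+1)-\mathrm{ord}_E(\mathfrak{a})=a(E;A,\mathfrak{a})$, and taking the infimum over all such $E$ gives the inequality. If some $E$ has $a(E;A,\mathfrak{a})<0$, applying the same estimate to $q\cdot\mathrm{ord}_E$ with $m=q\,\mathrm{ord}_E(\mathfrak{a})$ forces the left-hand side to $-\infty$, matching $\mathrm{mld}(0;A,\mathfrak{a})=-\infty$.

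For the reverse inequality $\mathrm{codim}(C_m,A_\infty)-m\ge\mathrm{mld}(0;A,\mathfrak{a})$ for every $m$, I would use that, below any fixed codimension, $C_m$ is a finite union of irreducible cylinders, and that the generic point of each such cylinder determines a divisorial valuation centered at $0$ (the Nash-type correspondence of \cite{IR2}). The minimal-codimension component of $C_m$ is then a maximal divisorial set $C_A(q\cdot\mathrm{ord}_E)$ with $q\,\mathrm{ord}_E(\mathfrak{a})\ge m$ and center $0$, so the codimension formula gives $\mathrm{codim}(C_m,A_\infty)=q(k_E+1)$ and therefore $\mathrm{codim}(C_m,A_\infty)-m\ge q(k_E+1)-q\,\mathrm{ord}_E(\mathfrak{a})=q\,a(E;A,\mathfrak{a})$. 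When the minimal log discrepancy is $\ge 0$ this is $\ge a(E;A,\mathfrak{a})\ge\mathrm{mld}(0;A,\mathfrak{a})$, while the $-\infty$ case was already settled. Combining the two inequalities yields the theorem.

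I expect the main obstacle to be the codimension formula in arbitrary characteristic: with resolution of singularities unavailable for general $n$ and $p>0$, one cannot run the usual change-of-variables argument on a log resolution and must instead establish $\mathrm{codim}(C_A(q\cdot\mathrm{ord}_E),A_\infty)=q(k_E+1)$ directly from the geometry of the jet schemes of the smooth scheme $A$, together with the correspondence between irreducible cylinders and divisorial valuations. A secondary point, which the paper asserts and which I would verify, is that these arguments of \cite{IR2} — stated for schemes of finite type — transfer verbatim to $A=\mathrm{Spec}\,k[[x_1,\dots,x_n]]$; here the key observation is that the truncation maps $A_\infty\to A_m$ and the entire cylinder/codimension formalism depend only on the completed local structure at $0$.
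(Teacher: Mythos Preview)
The paper does not supply its own proof of this theorem: it is stated in the preliminaries (Section~2.4) as a result quoted from \cite{EM} and \cite{IR2}, with only the remark that the arguments of \cite{IR2}, written for schemes of finite type, carry over to $\mathrm{Spec}\,k[[x_1,\dots,x_n]]$. There is therefore no in-paper proof to compare against.

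That said, your sketch is a faithful outline of the argument one finds in those references. The two-step reduction---bounding the infimum from above by feeding in the maximal divisorial cylinder $C_A(\mathrm{ord}_E)$, and from below by decomposing $C_m$ into irreducible cylinders whose generic points yield divisorial valuations---is exactly the strategy of \cite[Theorem~7.4]{EM} in characteristic $0$ and of \cite[Theorem~3.18]{IR2} in general. You also correctly isolate the characteristic-free codimension formula $\mathrm{codim}(C_A(q\cdot\mathrm{ord}_E),A_\infty)=q(k_E+1)$ as the crux, and the transfer to the formal power series setting as a secondary verification; both points are precisely what the paper flags in its preamble to Section~2.4. If you were writing this out in full, the only place needing care is the assertion that the minimal-codimension component of $C_m$ is itself a maximal divisorial set: what one gets a priori is an irreducible cylinder contained in such a set, and one then uses that on a smooth $A$ the associated valuation is divisorial and the containment forces equality of codimensions.
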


\begin{thm}[inversion of adjunction, {\cite[Theorem 8.1]{EM}}, {\cite[Theorem 3.23]{IR2}}]\label{inversion of adjunction}
Let $f$ be a non-zero element of $(x_1,\dots,x_n)$ and $0$ be the origin of $\mathrm{Spec} k[[x_1,\dots,x_n]]$.
Then $$\mathrm{mld}(0; \mathrm{Spec} k[[x_1,\dots,x_n]],(f))=\mathrm{mld}(0;\mathrm{Spec} k[[x_1,\dots,x_n]]/(f)).$$

\end{thm}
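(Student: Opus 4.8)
The plan is to reduce both sides to codimension computations in arc spaces and to match them level by level. Write $R=k[[x_1,\dots,x_n]]$, $\mathfrak m=(x_1,\dots,x_n)$, $A=\mathrm{Spec}R$ and $X=\mathrm{Spec}R/(f)$, with $0$ the origin of each. For the left-hand side I would apply Theorem \ref{mld contact in IR} directly to the principal ideal $(f)$, giving
\[\mathrm{mld}(0;A,(f))=\inf_{m}\Big\{\mathrm{codim}\big(\mathrm{Cont}^{\ge m}((f))\cap\mathrm{Cont}^{\ge 1}(\mathfrak m),\,A_\infty\big)-m\Big\}.\]
The task is then to show that this infimum equals $\mathrm{mld}(0;X)$.

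First I would set up the companion formula for the singular hypersurface $X$. Because $R$ is regular and $f\neq 0$, $X$ is a Gorenstein complete intersection, hence it satisfies $S_2$ and is Gorenstein in codimension $1$, so $\mathrm{mld}(0;X)$ is defined. Following the arc-space description of minimal log discrepancies developed in \cite{IR2} (the Mather--Jacobian theory), I would express $\mathrm{mld}(0;X)$ as an infimum over cylinders in the arc space $X_\infty$ of $X$, corrected by the order of the Jacobian ideal $J_f=(\partial f/\partial x_1,\dots,\partial f/\partial x_n)$ along the generic arc of each cylinder. The point that makes the hypersurface case clean is that $X$ is Gorenstein, so the Mather--Jacobian discrepancy coincides with the ordinary log discrepancy used in the definition of $\mathrm{mld}(0;X)$.

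The bridge between the two formulas is the natural closed immersion $X_\infty\hookrightarrow A_\infty$, under which arcs on $X$ through $0$ are exactly the arcs $\gamma$ of $A$ with $\gamma(0)=0$ and $f\circ\gamma=0$; equivalently $X_\infty\cap\mathrm{Cont}^{\ge1}(\mathfrak m)=\bigcap_{m}\big(\mathrm{Cont}^{\ge m}((f))\cap\mathrm{Cont}^{\ge1}(\mathfrak m)\big)$. I would stratify these arcs by the Jacobian order $e=\mathrm{ord}_\gamma J_f$, which is finite away from arcs lying entirely in $X_{\mathrm{sing}}$. On the stratum where $\partial f/\partial x_i$ vanishes to order exactly $e$ along $\gamma$, an implicit-function argument at the level of jet schemes lets me eliminate the coordinate $x_i$, producing a precise comparison between the fiber dimensions of the truncation maps of $A_\infty$ and of $X_\infty$ near $\gamma$. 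Carrying this through, the codimension of $\mathrm{Cont}^{\ge m}((f))\cap\mathrm{Cont}^{\ge1}(\mathfrak m)$ in $A_\infty$, minus $m$, is seen to reproduce exactly the contribution of $\gamma$ to the Mather--Jacobian infimum for $X$; taking the infimum over $m$ (equivalently, over the contact order of $\gamma$ with $f$) then matches the two expressions and yields the asserted equality.

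The main obstacle is to make the jet-scheme comparison work uniformly in arbitrary characteristic. In characteristic $0$ one can invoke generic smoothness and the change-of-variables formula of \cite{EM}, but in characteristic $p$ generic smoothness may fail and resolution of singularities is unavailable, so I cannot transport the ambient computation through a log resolution. Instead I would rely on the resolution-free, Jacobian-order stratification of \cite{IR2}: the delicate step is to verify, on each stratum, that the truncation maps between jet schemes of $X$ have the expected fiber dimension despite possible inseparability, so that the bookkeeping converting the shift by $m$ on the ambient side into the Jacobian correction on the hypersurface side holds in every characteristic. Once this dimension count is established, the equality of the two infima, and hence the theorem, follows.
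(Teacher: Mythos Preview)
The paper does not give its own proof of this theorem: it is stated in the preliminary section as a citation to \cite[Theorem~8.1]{EM} and \cite[Theorem~3.23]{IR2}, with only the remark that the arguments of \cite{IR2}, written for schemes of finite type, carry over verbatim to $\mathrm{Spec}\,k[[x_1,\dots,x_n]]$. So there is nothing to compare against beyond those external sources.

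Your sketch is, in outline, exactly the strategy of those references: express $\mathrm{mld}(0;A,(f))$ via contact loci in $A_\infty$, express $\mathrm{mld}(0;X)$ via the Mather--Jacobian arc formula, and match the two through the embedding $X_\infty\hookrightarrow A_\infty$ stratified by $\mathrm{ord}_\gamma J_f$. The identification of the Jacobian correction with the shift by $m$ is the heart of the Ein--Musta\c{t}\v{a} argument, and your remark that in positive characteristic one must replace generic smoothness and resolution by the direct fiber-dimension count of \cite{IR2} is precisely the point of that paper. So your plan is correct and faithful to the cited proofs; just be aware that the fiber-dimension step (your ``delicate step'') is where all the real work lies, and a full write-up would need to reproduce the key lemma of \cite{IR2} rather than invoke it as a black box.
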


%%%%%%%%%%%%%%%%%%%%%%%%%%%%%%%%%%%%%%%%%%%%%%%%%%%%%%%%%%%%%%%%%%%%%%%%%%%%%%%%%%%%%%%%%%%%%%
\section{Characterization of $2$-dimensional Semi-log canonical hypersurfaces}

In this section,  we characterize two-dimensional semi-log canonical hypersurfaces in arbitrary characteristic
from the viewpoint of the initial term of the defining equation.

\begin{prop}\label{slc mld}
Let  $A=\mathrm{Spec}k[[x,y,z]]$, $f\in (x,y,z)\setminus\{0\}$ and $X=\mathrm{Spec}k[[x,y,z]]/(f)$.
Then  $X$ is semi-log canonical at the origin if and only if
$\mathrm{mld}(0; A,(f))\ge 0,$
where $0$ is the origin of $A$.
\end{prop}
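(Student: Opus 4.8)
The plan is to reduce the semi-log canonical property of the hypersurface $X$ to the nonnegativity of the minimal log discrepancy of the pair $(A, (f))$ via inversion of adjunction, and to handle the subtlety that semi-log canonicity is defined for non-normal varieties while the classical log canonical condition requires normality.

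\medskip

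\textbf{Step 1: Inversion of adjunction.} By Theorem \ref{inversion of adjunction} we have
\[
\mathrm{mld}(0;A,(f)) = \mathrm{mld}(0;X),
\]
where $X = \mathrm{Spec}\,k[[x,y,z]]/(f)$. Thus it suffices to show that $X$ is semi-log canonical at the origin if and only if $\mathrm{mld}(0;X)\ge 0$. One direction is immediate: if $X$ is semi-log canonical at $0$, then $a(E;X,\mathcal O_X)\ge 0$ for every prime divisor $E$ over $X$ whose center contains $0$, hence the infimum defining $\mathrm{mld}(0;X)$ is $\ge 0$. (Here I should note that $X$, being a hypersurface in a smooth threefold, is Gorenstein, hence $\mathbb Q$-Gorenstein, satisfies $S_2$, and is Gorenstein in codimension $1$, so all the invariants are defined.)

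\medskip

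\textbf{Step 2: The converse.} Conversely, suppose $\mathrm{mld}(0;X)\ge 0$. By definition of $\mathrm{mld}$ (using $\dim X = 2 \ge 2$), this means $a(E;X,\mathcal O_X)\ge 0$ for every prime divisor $E$ over $X$ whose center is contained in $\{0\}$, i.e. whose center is the origin. To conclude that $X$ is semi-log canonical at $0$, I need the same inequality for every prime divisor $E$ whose center merely \emph{contains} $0$, i.e. also for divisors with larger center passing through $0$. The point is that a divisor $E$ over $X$ with center $c_X(E)\ni 0$ of positive dimension has its center contained in the non-normal/singular locus only if that locus is positive-dimensional; in general, for a divisor whose center $Z$ strictly contains $0$, one localizes at the generic point of $Z$ and uses that $\mathrm{mld}$ can only increase under such localization, together with the fact that $X$ is smooth (or at worst has milder singularities) away from finitely many further conditions. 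More precisely: if $0$ is in the closure of $c_X(E)$ but $c_X(E)\ne\{0\}$, pick the generic point $\eta$ of an irreducible component of $c_X(E)$ through $0$; then $a(E;X,\mathcal O_X) = a(E; X_\eta, \mathcal O_{X_\eta})$ and $\mathrm{mld}(c_X(E);X)\le \mathrm{mld}(\{\eta\};X_\eta)$, and one reduces to checking $\mathrm{mld}\ge 0$ at the generic points of positive-dimensional subvarieties, which follows because a general hyperplane section argument or direct inspection shows $X$ is log canonical (even canonical/smooth) in codimension $\le 1$ on the complement of the origin — indeed any surface hypersurface singularity that is isolated at $0$ is automatically fine elsewhere, and if the singular locus is a curve through $0$, semi-log canonicity there reduces to a one-dimensional transverse $A_\infty$ (node) condition which is exactly $\mathrm{mld}\ge 0$ in that transverse slice.

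\medskip

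\textbf{Main obstacle.} The genuinely delicate point is comparing the \emph{definition} of semi-log canonical (which quantifies over all prime divisors whose center \emph{contains} $x$) with the quantity $\mathrm{mld}(0;X)$ (which quantifies over divisors with center \emph{in} $\{0\}$, i.e. equal to $0$). Bridging this gap requires knowing that divisors with positive-dimensional center through $0$ do not impose an extra constraint — equivalently, that $X\setminus\{0\}$ is already semi-log canonical whenever $\mathrm{mld}(0;X)\ge 0$. I expect this to follow from a standard semicontinuity/localization property of $\mathrm{mld}$: $\mathrm{mld}(0;X)\ge 0$ forces $\mathrm{mld}(\{\eta\};X)\ge \mathrm{mld}(0;X)\ge 0$ at nearby points, hence $X$ is semi-log canonical in a punctured neighborhood of $0$ as well. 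I would cite or quickly reprove the monotonicity $\mathrm{mld}(W';X)\le\mathrm{mld}(W;X)$ for $W'\supset W$ together with the fact that for the origin being in the center of $E$, either the center is $\{0\}$ (handled directly) or one slides to a bigger center and applies the inequality the other way around after localizing. Assembling these localization statements carefully is the only real work; the algebro-geometric content is entirely contained in Theorems \ref{inversion of adjunction} and the definitions.
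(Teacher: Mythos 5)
Your skeleton coincides with the paper's: reduce via Theorem \ref{inversion of adjunction} to the equivalence ``$X$ is semi-log canonical at $0$ $\Leftrightarrow$ $\mathrm{mld}(0;X)\ge 0$'', and the implication ``slc $\Rightarrow$ $\mathrm{mld}\ge 0$'' is indeed immediate. The gap is in your Step 2, and it is exactly the point you yourself flag as ``the only real work'': under the hypothesis $\mathrm{mld}(0;X)\ge 0$ you must exclude a prime divisor $E$ over $X$ with $0\in c_X(E)$, $\dim c_X(E)>0$ and $a(E;X,\mathcal O_X)<0$. Your proposed mechanism --- that $X$ is ``automatically fine'' in codimension one away from the origin, so that a one-dimensional singular locus is transversally a node --- is false as an unconditional statement: $f=xy(x+y)$ defines a reduced hypersurface whose transverse section along the $z$-axis is three concurrent lines (the conductor on the normalization has coefficient $2$ there), so it is not semi-log canonical at any point of the $z$-axis. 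In that example $\mathrm{mld}(0;X)=-\infty$, but that is precisely the implication at stake, and your sketch never proves it: the ``standard semicontinuity/localization property'' you invoke ($\mathrm{mld}(0;X)\ge 0$ forces nonnegativity at nearby generic points) is essentially equivalent to the statement being proved, and the hyperplane-section/transverse-slice reasoning is not carried out (and would need extra care in positive characteristic).

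What is missing is one of two standard inputs. Either argue directly that a divisor $E$ with positive-dimensional center through $0$ and $a(E;X,\mathcal O_X)<0$ produces, by repeatedly blowing up points of (the strict transform of) $E$ lying over $0$ on a normal model, divisors with center $\{0\}$ and arbitrarily negative log discrepancy, so that $\mathrm{mld}(0;X)=-\infty$; or do what the paper does: observe that $(X,\mathfrak m)$ admits a log resolution (resolution of excellent two-dimensional schemes holds in arbitrary characteristic) and then run the characteristic-zero argument of \cite[Page 532]{EM} verbatim, which gives ``$X$ slc at $0$ $\Leftrightarrow$ $\mathrm{mld}(0;X)\ge 0$'' in one stroke. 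With either input supplied, your Step 1 plus inversion of adjunction completes the proof; without it, Step 2 does not close.
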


\begin{proof}
Note that there exists a log resolution of $(X,\mathfrak m)$, where $\mathfrak m$ is the defining ideal of $0$.
Hence in the same way as characteristic 0 case,
$X$ is semi-log canonical at $0$ if and only if $\mathrm{mld}(0;X)\ge 0$ (for example, see \cite[Page 532]{EM}).
By Theorem \ref{inversion of adjunction}, we have $\mathrm{mld}(0; A,(f))=\mathrm{mld}(0;X)$.
Therefore this proposition holds.

\end{proof}

\begin{lem}\label{mld contact}
Let $\mathfrak a=(f_1,\dots,f_r)\subset (x_1,\dots,x_n)$ be a non-zero ideal of $k[[x_1,\dots,x_n]]$,  $A=\mathrm{Spec} k[[x_1,\dots,x_n]]$ and $0$ be the origin of $A$.
Then
\[\mathrm{mld}(0;A,\mathfrak a)=\inf_{m\in \mathbb N}\Big(\mathrm{ht}(x_1^{(0)},\dots,x_n^{(0)},f_1^{(0)},\dots,f_r^{(0)},\dots,f_1^{(m-1)},\dots,f_r^{(m-1)})-m\Big).\]
\end{lem}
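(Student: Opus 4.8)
The plan is to deduce this lemma directly from Theorem \ref{mld contact in IR} by rewriting the codimension of the relevant contact locus as a height of an explicit ideal in the coordinate ring of the arc space. First I would fix $m \in \mathbb{N}$ and analyze the scheme
$$\mathrm{Cont}^{\ge m}(\mathfrak a)\cap\mathrm{Cont}^{\ge 1}((x_1,\dots,x_n)).$$
By definition of the contact loci, this closed subscheme of $A_\infty = \mathrm{Spec} k[[x_i^{(j)}]]$ is cut out by the equations $x_i^{(0)} = 0$ for $i = 1,\dots,n$ (the condition coming from $\mathrm{Cont}^{\ge 1}$ of the maximal ideal) together with $f_s^{(l)} = 0$ for $1 \le s \le r$ and $0 \le l \le m-1$ (the condition $\mathrm{ord}(f_s) \ge m$ along the arc). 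Hence it is $\mathrm{Spec}$ of the quotient of $k[[x_i^{(j)}]]$ by the ideal
$$J_m := (x_1^{(0)},\dots,x_n^{(0)},\, f_1^{(0)},\dots,f_r^{(0)},\dots,f_1^{(m-1)},\dots,f_r^{(m-1)}).$$

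Next I would translate the codimension appearing in Theorem \ref{mld contact in IR} into the height of $J_m$. Since $A_\infty$ is irreducible (it is the spectrum of the domain $k[[x_i^{(j)}]]$), and the contact locus in question is a nonempty closed subscheme, its codimension in $A_\infty$ equals the height of its defining ideal, i.e.
$$\mathrm{codim}\big(\mathrm{Cont}^{\ge m}(\mathfrak a)\cap\mathrm{Cont}^{\ge 1}((x_1,\dots,x_n)),A_\infty\big) = \mathrm{ht}\, J_m.$$
Here one should be slightly careful about the fact that $k[[x_i^{\,(j)}\mid i,j]]$ is not Noetherian, so I would interpret $\mathrm{ht}\,J_m$ as the supremum of lengths of chains of primes below a minimal prime of $J_m$, equivalently as the codimension, and note that since $J_m$ involves only finitely many of the variables $x_i^{(j)}$ (those with $0 \le j \le m-1$, say; and $f_s^{(l)}$ for $l \le m-1$ only involves $x_i^{(j)}$ with $j \le m-1$), the height is computed in the Noetherian subring $k[[x_i^{(j)}\mid 0\le j\le m-1]]$ and is finite. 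This is exactly the convention under which Lemma \ref{mld contact} is stated and subsequently used.

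Combining the two displays above with Theorem \ref{mld contact in IR} yields
$$\mathrm{mld}(0;A,\mathfrak a)=\inf_{m\in \mathbb N}\big(\mathrm{ht}\, J_m - m\big),$$
which is the asserted formula. The main obstacle, such as it is, is bookkeeping rather than mathematical depth: one must check that replacing ``codimension of a contact locus in $A_\infty$'' by ``height of an ideal'' is legitimate in this infinite-dimensional, non-Noetherian setting, and that the two contact conditions really do correspond precisely to the listed generators of $J_m$ (in particular that requiring $\mathrm{ord}$ of each generator $f_s$ to be $\ge m$ is equivalent to the vanishing of $f_s^{(0)},\dots,f_s^{(m-1)}$, which is immediate from the expansion $f_s(\sum_j x^{(j)}t^j) = \sum_j f_s^{(j)} t^j$ defining the $f_s^{(l)}$). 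Once those identifications are in place, the lemma is just a restatement of Theorem \ref{mld contact in IR}.
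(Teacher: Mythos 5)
Your proposal is correct and follows essentially the same route as the paper: identify $\mathrm{Cont}^{\ge m}(\mathfrak a)\cap\mathrm{Cont}^{\ge 1}((x_1,\dots,x_n))$ as the closed subscheme of $A_\infty$ cut out by $x_1^{(0)},\dots,x_n^{(0)}$ and the $f_s^{(l)}$ with $0\le l\le m-1$, then apply Theorem \ref{mld contact in IR}, reading the codimension as the height of that ideal. Your extra remarks on the non-Noetherian bookkeeping are a reasonable elaboration of what the paper leaves implicit.
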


\begin{proof}
Note that 
\begin{align*}
&\mathrm{Cont}^{\ge m}(\mathfrak a)\cap\mathrm{Cont}^{\ge 1}((x_1,\dots,x_n))\\
=&\mathrm{Spec}k[[x_i^{(j)}\ |\ i=1,\dots,n,j\in\mathbb Z_{\ge 0}]]/(x_1^{(0)},\dots,x_n^{(0)},f_s^{(l)}\ |\ 1\le s\le r,\ 0\le l\le m-1).
\end{align*}
Therefore 
by Theorem \ref{mld contact in IR}
we have 
\[\mathrm{mld}(0;A,\mathfrak a)=\inf_{m\in \mathbb N}\Big(\mathrm{ht}(x_1^{(0)},\dots,x_n^{(0)},f_1^{(0)},\dots,f_r^{(0)},\dots,f_1^{(m-1)},\dots,f_r^{(m-1)})-m\Big).\]
\end{proof}

The following proposition is the main tool in this section.
This proposition is useful to decide whether a  pair of a smooth variety and an ideal is log canonical.

\begin{prop}\label{mld initial}
Let $\mathfrak a$ be a non-zero ideal of $k[[x_1,\dots,x_n]]$ and $A=\mathrm{Spec}k[[x_1,\dots,x_n]]$.
Then for any $w\in \mathbb N^n$,
$$\mathrm{mld}(0; A,\mathfrak a)\ge \mathrm{mld}(0; A, \mathrm{in}_{w}\mathfrak a),$$
where $0$ is the origin of $A$.
\end{prop}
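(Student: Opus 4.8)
The plan is to reduce the inequality to the jet-theoretic height formula of Lemma~\ref{mld contact} and then to realize $\mathrm{in}_w\mathfrak{a}$ as a degeneration of $\mathfrak{a}$ at the level of jet schemes, controlling the relevant dimensions by upper semicontinuity of fibre dimension. Writing $J_m(\mathfrak b)$ for the ideal $(x_1^{(0)},\dots,x_n^{(0)},g_1^{(0)},\dots,g_s^{(0)},\dots,g_1^{(m-1)},\dots,g_s^{(m-1)})$ attached to a choice of generators $g_1,\dots,g_s$ of an ideal $\mathfrak b$, Lemma~\ref{mld contact} reduces the proposition to proving that for every $m$ one has $\mathrm{ht}\,J_m(\mathfrak a)\ge \mathrm{ht}\,J_m(\mathrm{in}_w\mathfrak a)$; subtracting $m$ and taking the infimum over $m$ then yields the claim, since $\inf$ preserves $\ge$.

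First I would fix a convenient generating set. Since $\mathrm{in}_w\mathfrak a$ is an ideal of the Noetherian ring $k[[x_1,\dots,x_n]]$ whose elements are exactly the $\mathrm{in}_w h$ with $h\in\mathfrak a$, I can choose finitely many $h_1,\dots,h_p\in\mathfrak a$ with $\mathrm{in}_w\mathfrak a=(\mathrm{in}_w h_1,\dots,\mathrm{in}_w h_p)$ and then enlarge them to a generating set $h_1,\dots,h_r$ of $\mathfrak a$. As each $\mathrm{in}_w h_t$ lies in $\mathrm{in}_w\mathfrak a$, the enlarged family still satisfies $(\mathrm{in}_w h_1,\dots,\mathrm{in}_w h_r)=\mathrm{in}_w\mathfrak a$. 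This is the step that forces the correct direction of the inequality: it guarantees that the naive $\lambda\to 0$ limit of the degeneration below equals $J_m(\mathrm{in}_w\mathfrak a)$ on the nose, rather than a possibly smaller ideal.

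Next I would set up the weight degeneration. Consider, for $\lambda\ne 0$, the automorphism $\sigma_\lambda\colon x_i^{(j)}\mapsto \lambda^{w_i}x_i^{(j)}$ of the finitely generated ring $B=k[x_i^{(j)}\mid 1\le i\le n,\ 0\le j\le m-1]$, which contains all generators of $J_m$. A direct computation shows that every monomial occurring in $h^{(l)}$ that comes from a monomial $x^{\mu}$ of $h$ has $w$-weight exactly $w\cdot\mu$; hence, setting $d_t=\mathrm{ord}_w h_t$, the element $\tilde h_t^{(l)}(\lambda):=\lambda^{-d_t}\sigma_\lambda(h_t^{(l)})$ is a polynomial in $\lambda$ with $\tilde h_t^{(l)}(0)=(\mathrm{in}_w h_t)^{(l)}$. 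Let $I_\lambda=(x_i^{(0)},\tilde h_t^{(l)}(\lambda))\subset B$ be the resulting family of ideals over $\mathbb A^1_\lambda$. For $\lambda\ne 0$ one has $I_\lambda=\sigma_\lambda(J_m(\mathfrak a))$, so $V(I_\lambda)\cong V(J_m(\mathfrak a))$ and in particular $\dim V(I_\lambda)=\dim V(J_m(\mathfrak a))$; at $\lambda=0$ one has $I_0=J_m(\mathrm{in}_w\mathfrak a)$ by the previous paragraph.

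Finally I would invoke upper semicontinuity of fibre dimension for the projection $V(\mathcal I)\to \mathbb A^1_\lambda$, where $\mathcal I=(x_i^{(0)},\tilde h_t^{(l)}(\lambda))\subset B[\lambda]$: the special fibre has dimension at least that of the general fibre, whence $\dim V(J_m(\mathrm{in}_w\mathfrak a))=\dim V(I_0)\ge\dim V(J_m(\mathfrak a))$. Passing to codimensions in $B$ (and noting that these codimensions agree with the heights appearing in Lemma~\ref{mld contact}, since the generators involve only the finitely many variables $x_i^{(j)}$ with $j\le m-1$ and the relevant locus passes through the origin) reverses the inequality to $\mathrm{ht}\,J_m(\mathfrak a)\ge\mathrm{ht}\,J_m(\mathrm{in}_w\mathfrak a)$, as required. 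The main obstacle is the bookkeeping of the second and third paragraphs: one must choose the generators of $\mathfrak a$ so that $I_0$ is honestly $J_m(\mathrm{in}_w\mathfrak a)$, because an arbitrary generating set would only give $I_0\subseteq J_m(\mathrm{in}_w\mathfrak a)$, which points the dimension comparison the wrong way; and one must verify the weight-homogeneity of $h^{(l)}$ that makes $\tilde h_t^{(l)}$ regular at $\lambda=0$ with the stated value. The semicontinuity input itself is standard.
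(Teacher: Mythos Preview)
Your argument is correct and follows essentially the same route as the paper. The paper also chooses generators $f_1,\dots,f_r$ of $\mathfrak a$ with $\mathrm{in}_w\mathfrak a=(\mathrm{in}_w f_1,\dots,\mathrm{in}_w f_r)$, observes that $(\mathrm{in}_w f_i)^{(j)}=\mathrm{in}_{w^{(m)}}f_i^{(j)}$ for the weight $w^{(m)}=(w,\dots,w)$ on the jet variables (your weight-homogeneity computation), and then invokes \cite[Theorem~15.17]{E} to obtain the height inequality; the only difference is that you unfold that citation by constructing the one-parameter weight degeneration explicitly and appealing to upper semicontinuity of fibre dimension, which is exactly the content of the cited theorem.
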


\begin{proof}
If $\mathfrak a=k[[x_1,\dots,x_n]]$, then $\mathrm{in}_{w}\mathfrak a=k[[x_1,\dots,x_n]]$.
Therefore this proposition holds.

We assume that $\mathfrak a\subset (x_1,\dots,x_n)$.
There exist $f_1,\dots,f_r\in k[[x_1,\dots,x_n]]$  such that $\mathfrak a=(f_1,\dots,f_r)$ and $\mathrm{in}_w\mathfrak a=(\mathrm{in}_wf_1,\dots,\mathrm{in}_wf_r)$.
Note that for $s,m\in \mathbb N$ with $1\le s\le r$, we can regard $f_s^{(0)},\dots,f_s^{(m-1)}$ as elements of $k[[x_i^{(l)}\ |\ i=1,\dots,n,l=0,\dots,m-1]]$.
Let $w^{(m)}=(w,\dots,w)\in\mathbb N^{mn}$.
We can consider the order of an element of $k[[x_i^{(l)}\ |\ i=1,\dots,n,l=0,\dots,m-1]]$ with respect to $w^{(m)}$ such that $\mathrm{ord}_w(x_i)=\mathrm{ord}_{w^{(m)}}(x_i^{(l)})$. 
If $(\mathrm{in}_wf_i)^{(j)}\neq 0$, then we have $(\mathrm{in}_wf_i)^{(j)}=\mathrm{in}_{w^{(m)}}f_i^{(j)}$ since $\mathrm{ord}_{w^{(m)}}f_i^{(j)}=\mathrm{ord}_{w}f_i$.
Therefore by \cite[Theorem 15.17]{E}
\begin{align*}
&\mathrm{ht}(x_1^{(0)},\dots,x_n^{(0)},f_1^{(0)},\dots,f_r^{(0)},\dots,f_1^{(m-1)},\dots,f_r^{(m-1)})\\
\ge&\mathrm{ht}(x_1^{(0)},\dots,x_n^{(0)},\mathrm{in}_{w^{(m)}}f_1^{(0)},\dots,\mathrm{in}_{w^{(m)}}f_r^{(0)},\dots,\mathrm{in}_{w^{(m)}}f_1^{(m-1)},\dots,\mathrm{in}_{w^{(m)}}f_r^{(m-1)})\\
\ge&\mathrm{ht}(x_1^{(0)},\dots,x_n^{(0)},(\mathrm{in}_{w}f_1)^{(0)},\dots,(\mathrm{in}_{w}f_r)^{(0)},\dots,(\mathrm{in}_{w}f_1)^{(m-1)},\dots,(\mathrm{in}_{w}f_r)^{(m-1)}).
\end{align*}
Hence by Theorem \ref{mld contact}, we have
$$\mathrm{mld}(0; A,\mathfrak a)\ge \mathrm{mld}(0; A, \mathrm{in}_{w}\mathfrak a).$$
\end{proof}

Let us denote by $E_w$ the toric  prime divisor over $\mathrm{Spec}k[[x,y,z]]$ corresponding to the 1-dimensional cone $\mathbb R_{\geq 0} w$ for $w\in \mathbb Z^3_{\geq 0}\setminus \{(0,0,0)\}$.
Then, $w\cdot (1,1,1)=k_{E_w}+1$ and $\mathrm{ord}_{E_w}(f)=\mathrm{min}\{w\cdot (a,b,c) \ |\ x^ay^bz^c\in\mathrm{Supp}f\}$ for $f\in k[[x,y,z]]$.

\begin{lem}\label{slc order 3}
Let $A=\mathrm{Spec}k[[x,y,z]]$, $0$ be the origin of $A$ and $f\in k[[x,y,z]]\setminus\{0\}$.
Suppose that $\mathrm{ord}_{(1,1,1)}f\ge 3$, then 
$$\mathrm{mld}(0;A,(f))=\mathrm{mld}(0;A,(\mathrm{in}_{(1,1,1)}f)).$$
Moreover if $\mathrm{ord}_{(1,1,1)}f\neq 3$ or $\mathrm{mld}(0;A,(\mathrm{in}_{(1,1,1)}f))\ge 0$,
then $E_{(1,1,1)}$ computes $\mathrm{mld}(0;A,(f))$ and $\mathrm{mld}(0;A,(\mathrm{in}_{(1,1,1)}f)).$
\end{lem}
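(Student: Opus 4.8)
The plan is to separate the two assertions. For the equality $\mathrm{mld}(0;A,(f))=\mathrm{mld}(0;A,(\mathrm{in}_{(1,1,1)}f))$, one inequality is free: Proposition \ref{mld initial} with $w=(1,1,1)$ gives $\mathrm{mld}(0;A,(f))\ge\mathrm{mld}(0;A,(\mathrm{in}_{(1,1,1)}f))$. So the work is the reverse inequality, and for that I would argue via the toric divisor $E_{(1,1,1)}$, which is nothing but the ordinary blow-up of the origin. Writing $d=\mathrm{ord}_{(1,1,1)}f\ge 3$, we have $k_{E_{(1,1,1)}}=(1,1,1)\cdot(1,1,1)-1=2$ and $\mathrm{ord}_{E_{(1,1,1)}}(f)=d$, hence $a(E_{(1,1,1)};A,(f))=2-d+1=3-d\le 0$. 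The same computation applies to $g:=\mathrm{in}_{(1,1,1)}f$, since $\mathrm{ord}_{E_{(1,1,1)}}(g)=d$ as well. Thus $a(E_{(1,1,1)};A,(f))=a(E_{(1,1,1)};A,(g))=3-d$.

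Now I distinguish the cases. If $d\ge 4$, then $a(E_{(1,1,1)};A,(f))=3-d<0$, so by definition $\mathrm{mld}(0;A,(f))=-\infty=\mathrm{mld}(0;A,(g))$ — both equal $-\infty$ because a single prime divisor with negative log discrepancy already forces the mld to $-\infty$ — and $E_{(1,1,1)}$ computes both. If $d=3$, then $a(E_{(1,1,1)};A,(f))=0$, so $\mathrm{mld}(0;A,(f))\le 0$ and likewise $\mathrm{mld}(0;A,(g))\le 0$; combined with Proposition \ref{mld initial} this gives: either both mlds are $-\infty$, or both are $0$, i.e. they agree. In the subcase $d=3$ with $\mathrm{mld}(0;A,(g))\ge 0$ we then get $\mathrm{mld}(0;A,(g))=0=\mathrm{mld}(0;A,(f))$, the value $0$ is attained at $E_{(1,1,1)}$, and so $E_{(1,1,1)}$ computes both mlds; in the subcase $d=3$ with $\mathrm{mld}(0;A,(g))=-\infty$ we again get equality of the two mlds (both $-\infty$), which is all that the first assertion claims. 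This matches exactly the hypothesis "$\mathrm{ord}_{(1,1,1)}f\neq 3$ or $\mathrm{mld}(0;A,(\mathrm{in}_{(1,1,1)}f))\ge 0$" under which $E_{(1,1,1)}$ is asserted to compute the mlds.

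The one point that needs genuine care — and which I expect to be the main obstacle — is justifying $\mathrm{mld}(0;A,(f))=-\infty$ (rather than just $\le 0$) whenever there exists a prime divisor over $A$ with center at $0$ and negative log discrepancy. In dimension $\ge 2$ this is the standard fact that $\mathrm{mld}$ is either $\ge 0$ or $-\infty$: if some $a(E;A,(f))<0$ one can, by taking suitable weighted blow-ups or by the jet-scheme description in Theorem \ref{mld contact in IR} and Lemma \ref{mld contact}, produce divisors with arbitrarily negative log discrepancy, so the infimum is $-\infty$. I would either cite this dichotomy (it holds for $A=\mathrm{Spec}k[[x,y,z]]$ by the results reviewed in Section 2, e.g. via \cite{EM}, \cite{IR2}) or sketch it from Lemma \ref{mld contact} directly. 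Granting that dichotomy, everything else is the bookkeeping of the three cases above, and the proof is complete.
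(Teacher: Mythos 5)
Your handling of the case $\mathrm{ord}_{(1,1,1)}f\ge 4$ and of the case $\mathrm{ord}_{(1,1,1)}f=3$ with $\mathrm{mld}(0;A,(\mathrm{in}_{(1,1,1)}f))\ge 0$ is correct and is essentially the paper's argument: compute $a(E_{(1,1,1)})=3-d$, use the definition of ``computes'' when this is negative, and sandwich with Proposition \ref{mld initial} when $\mathrm{mld}(0;A,(\mathrm{in}_{(1,1,1)}f))\ge 0$. (Your worry about the dichotomy ``some $a(E)<0$ implies $\mathrm{mld}=-\infty$'' is not the real issue; that is standard and is in any case built into the way the paper uses the notion of computing an mld.)

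The genuine gap is in the subcase $d=3$ with $\mathrm{mld}(0;A,(\mathrm{in}_{(1,1,1)}f))=-\infty$. You assert that $a(E_{(1,1,1)};A,(f))=0$ together with Proposition \ref{mld initial} forces ``either both mlds are $-\infty$ or both are $0$,'' but Proposition \ref{mld initial} only gives the inequality $\mathrm{mld}(0;A,(f))\ge \mathrm{mld}(0;A,(\mathrm{in}_{(1,1,1)}f))$, and $a(E_{(1,1,1)};A,(f))=0$ only gives $\mathrm{mld}(0;A,(f))\le 0$. Nothing you have written rules out the configuration $\mathrm{mld}(0;A,(f))=0$ while $\mathrm{mld}(0;A,(\mathrm{in}_{(1,1,1)}f))=-\infty$, which is exactly the configuration the first assertion of the lemma must exclude. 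This is the hard content of the lemma when the multiplicity equals the ambient dimension: one must show that degenerating $f$ to its initial form cannot destroy log canonicity in the critical case $d=3$. The paper does not derive this from the elementary estimates; it invokes \cite[Corollary 5.16]{I}, which states precisely that $\mathrm{mld}(0;A,(f))=\mathrm{mld}(0;A,(\mathrm{in}_{(1,1,1)}f))$ when $\mathrm{ord}_{(1,1,1)}f=3$, and only afterwards observes that $E_{(1,1,1)}$ computes both mlds under the extra hypothesis $\mathrm{mld}(0;A,(\mathrm{in}_{(1,1,1)}f))\ge 0$ (note that when $d=3$ and $\mathrm{mld}(0;A,(\mathrm{in}_{(1,1,1)}f))=-\infty$, the divisor $E_{(1,1,1)}$, having log discrepancy $0$, does \emph{not} compute the mld, which is why the lemma's ``moreover'' clause carries that hypothesis). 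To repair your proof you need to supply this missing input, e.g.\ by citing the same result of Ishii or proving an analogous degeneration statement for the multiplicity-three case.
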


\begin{proof}
Let $w=(1,1,1)$.
We assume that $\mathrm{ord}_wf\ge 4$.
Then
$$a(E_{w};A,(f))=a(E_{w};A,(\mathrm{in}_wf))<0.$$
Therefore $E_{(1,1,1)}$ computes $\mathrm{mld}(0;A,(f))$ and $\mathrm{mld}(0;A,(\mathrm{in}_{(1,1,1)}f))$ and we have
$$\mathrm{mld}(0;A,(f))=\mathrm{mld}(0;A,(\mathrm{in}_wf))=-\infty.$$

We assume that $\mathrm{ord}_wf=3$.
Then by \cite[Corollary 5.16]{I}, we have 
$$\mathrm{mld}(0;A,(f))=\mathrm{mld}(0;A,(\mathrm{in}_wf)).$$
Note that $a(E_{w};A,(f))=a(E_{w};A,(\mathrm{in}_wf))=0$.
If $\mathrm{mld}(0;A,(\mathrm{in}_{(1,1,1)}f))\ge 0$, then $E$ computes $\mathrm{mld}(0;A,(f))$ and $\mathrm{mld}(0;A,(\mathrm{in}_{(1,1,1)}f))$ by Proposition \ref{mld initial}.
\end{proof}

The following lemma will be used in  the proof of Proposition \ref{key prop}.

\begin{lem}\label{ord 4 chk = 2}
Let $A=\mathrm{Spec}k[[x,y,z]]$, $0$ be the origin of $A$ and $f\in k[[x,y,z]]$.
Suppose that $\mathrm{in}_{(1,1,1)}f=\mathrm{in}_{(3,2,2)}f=x^2$
 and $\mathrm{ch}k= 2$. 
Then there exist a $k$-automorphism  $\phi$ of  $k[[x,y,z]]$ and $w=(w_1,w_2,w_3)\in \mathbb N^3$ such that 
$E_w$ computes  $\mathrm{mld}\big(0;A,(\phi(f))\big)$ and $\mathrm{mld}\big(0;A,(\mathrm{in}_w\phi(f))\big)$,
$$\mathrm{mld}(0;A,(f))=\mathrm{mld}\big(0;A,(\mathrm{in}_w\phi(f))\big)$$
and $w$ is  one of the following:
\begin{align*}
\mathrm{(i)}&\ (10,5,4),& \mathrm{(ii)}&\ (2,1,1),& \mathrm{(iii)}&\ (15,8,6).
\end{align*}
In  case $\mathrm{(ii)}$, $\mathrm{mld}(0;A,(f))=0.$
In  case $\mathrm{(i)\ and\  (iii)}$, $\mathrm{mld}(0;A,(f))=-\infty.$
\end{lem}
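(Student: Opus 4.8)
The hypothesis $\mathrm{in}_{(1,1,1)}f = \mathrm{in}_{(3,2,2)}f = x^2$ tells us two things simultaneously: the order of $f$ at the origin is $2$ with leading term $x^2$ (a double point whose tangent cone is a double plane), and moreover $f$ contains no term divisible by $y^3$, $z^3$, $y^2z$ or $yz^2$ of $w$-weight $\le 6$ beyond $x^2$ under the weight $(3,2,2)$. Concretely, after separating off $x^2$, we can write $f = x^2 + (\text{terms involving } x \text{ linearly}) + g(y,z) + (\text{higher } x\text{-order})$, and the $(3,2,2)$-condition forces $g(y,z)$ to have order $\ge 4$ in $(y,z)$ while the cross terms $x\cdot h(y,z)$ must have $h$ of order $\ge 2$. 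The plan is to use a Weierstrass-type / completion-of-the-square argument to normalize $f$, then read off the relevant weight vector.

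**The normalization step.** Since $\mathrm{ch}\,k = 2$, we \emph{cannot} complete the square in the usual way; this is precisely why the characteristic-$2$ case needs separate treatment and why the exotic weights $(10,5,4)$ and $(15,8,6)$ appear. First I would apply a $k$-automorphism $\phi$ to put $f$ in a standard form: absorb the $x$-linear part $x\cdot h(y,z)$ and the pure $x$-higher-order terms as far as possible, reducing to $f \sim x^2 + (\text{lower-order correction in } x) + g(y,z)$ where $g$ is a power series in $y,z$ of order $\ge 4$. The structure of $g$ — governed by its Newton polygon in the $(y,z)$-plane — then dictates which of the three cases we land in. I expect case (ii), weight $(2,1,1)$, to be the generic situation where $g$ has order exactly $4$ and defines a reduced (or mild) plane curve, giving $\mathrm{mld} = 0$, i.e.\ $X$ is log canonical (this is the $A$- or $D$-type double-point regime). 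Cases (i) and (iii), weights $(10,5,4)$ and $(15,8,6)$, should correspond to degenerate configurations of $g$ — higher tangency, e.g.\ $g$ of the shape $y^2 z^2 + \cdots$ or with a single branch of high contact — where in characteristic $2$ the square-term obstruction prevents further improvement and the singularity is worse than log canonical, hence $\mathrm{mld} = -\infty$.

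**Computing the mld's via the toric divisor.** For each of the three normal forms, I would verify that $E_w$ (the toric divisor attached to $w$) computes both $\mathrm{mld}(0;A,(\phi(f)))$ and $\mathrm{mld}(0;A,(\mathrm{in}_w\phi(f)))$. The inequality $\mathrm{mld}(0;A,(\phi(f))) \le \mathrm{mld}(0;A,(\mathrm{in}_w\phi(f)))$ comes for free from Proposition~\ref{mld initial}; for the reverse inequality and the statement that $E_w$ computes the mld, I would use Lemma~\ref{mld contact} to estimate the relevant heights $\mathrm{ht}(x^{(0)},y^{(0)},z^{(0)}, (\phi f)^{(0)},\dots,(\phi f)^{(m-1)})$ directly on the arc space, showing the infimum is attained at the $m$ predicted by the valuation $\mathrm{ord}_{E_w}$, i.e.\ $a(E_w;A,(\phi f)) = w\cdot(1,1,1) - \mathrm{ord}_{E_w}(\phi f)$. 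In case (ii) this evaluates to $0$; in cases (i) and (iii) to a negative number, which by the definition of "computes" already gives $\mathrm{mld} = -\infty$. The equality $\mathrm{ord}_{E_w}(\phi f) = \mathrm{ord}_{E_w}(\mathrm{in}_w\phi f) = \mathrm{ord}_w(\phi f)$ is immediate from the definitions, so $a(E_w;A,(\phi f)) = a(E_w;A,(\mathrm{in}_w\phi f))$, which is what lets $E_w$ compute both at once.

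**The main obstacle.** The hard part will be the bookkeeping of the automorphism $\phi$ in characteristic $2$: deciding exactly how far one can normalize $g(y,z)$ when squaring is unavailable, and proving that the residual normal forms fall into precisely the three listed weight types and no others. This requires a careful case analysis on the Newton polygon of $g$ together with tangency data of its branches — the kind of argument that in characteristic $0$ would be subsumed by Arnold's classification of $A$-$D$-$E$ and simple-elliptic singularities, but which here must be redone by hand. Once the normal forms are pinned down, the mld computations are a mechanical application of Lemma~\ref{mld contact} and the toric divisor formula, and the claimed values of $\mathrm{mld}$ (namely $0$ in case (ii) and $-\infty$ in cases (i),(iii)) follow. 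I would also invoke Theorem~\ref{HW thm} / the Fedder criterion (Proposition~\ref{Fedder}) as a cross-check in characteristic $2$ that the case-(ii) form is genuinely log canonical.
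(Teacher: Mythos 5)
Your overall frame (normalize by an automorphism, then exhibit a single toric divisor $E_w$ with $a(E_w;A,(\phi f))=a(E_w;A,(\mathrm{in}_w\phi f))$ computing both mld's) matches the paper, but the case division you propose is based on the wrong invariant, and in characteristic $2$ it gives incorrect predictions. You reduce to $x^2+g(y,z)$ plus a ``lower-order correction in $x$'' and claim the Newton polygon/branch tangency of $g$ decides the case, with case (ii) being ``$g$ of order $4$ and reduced''. In characteristic $2$ this is false: the decisive datum is whether the $(2,1,1)$-initial form of $f$ contains a cross term $xyz$ or (after normalization) $xy^2$, precisely the terms that cannot be absorbed because one cannot complete the square. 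If neither is present, then the purely $(y,z)$-quartic part can be stripped of its square monomials by $x\mapsto x+b_1yz+b_2z^2$ (since $y^2z^2$, $z^4$, $y^4$ are squares in characteristic $2$), and after a linear change the initial form collapses to $x^2+ey^3z$, which has $(15,8,6)$-order $30>29$, hence $\mathrm{mld}=-\infty$ --- even when $g$ is a reduced quartic with four distinct lines such as $yz(y+z)(y+az)$, which in characteristic $\neq 2$ would be simple elliptic and log canonical. Conversely, the log canonical cases (ii) in characteristic $2$ are exactly: $xyz$ present (shown F-pure via Proposition~\ref{Fedder}, hence lc by Theorem~\ref{HW thm}), or $xyz$ absent but $xy^2$ present (normal form $x^2+xy^2+y^3z+\cdots$, a simple elliptic singularity, handled via Proposition~\ref{slc mld}). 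Your sketch never isolates these two subcases, so the classification into the three listed weights is not actually established.

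There is a second gap in how you plan to prove $\mathrm{mld}=0$ in case (ii). The negative cases (i) and (iii) are indeed immediate once one knows $\mathrm{ord}_w f$ (a single divisor with $a<0$ suffices), but for the lower bound $\mathrm{mld}(0;A,(f))\ge 0$ your proposal to ``estimate the heights directly'' via Lemma~\ref{mld contact} is not a mechanical step: that lemma expresses the mld as an infimum over \emph{all} $m$, so non-negativity requires a codimension bound for every jet level, which is exactly the hard part. The paper avoids this entirely: it proves $\mathrm{mld}(0;A,(\mathrm{in}_{(2,1,1)}f))\ge 0$ by F-purity (Fedder plus Hara--Watanabe) in the $xyz$-case and by the simple elliptic classification together with Proposition~\ref{slc mld} in the $xy^2$-case, and then transfers this to $f$ by the one-sided inequality of Proposition~\ref{mld initial}, with $a(E_{(2,1,1)};A,(f))=0$ pinning the value at $0$. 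Treating Fedder/Hara--Watanabe as a mere ``cross-check'' inverts the logical structure; in this lemma it is the engine of the proof, and without it (or some substitute) your argument for case (ii) does not close.
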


\begin{proof}

Let $h$ be the element of $k[[x,y,z]]$ such that $f=x^2+h$.
Note that $\mathrm{ord}_{(2,1,1)}f\ge 4$ since
$\mathrm{in}_{(1,1,1)}f=\mathrm{in}_{(3,2,2)}f=x^2$.
Therefore $\mathrm{ord}_{(2,1,1)}h\ge 4$.

\noindent
{\bf Step 1.} 
Assume $\mathrm{ord}_{(2,1,1)}h\ge 5$.\\
Note that $\mathrm{ord}_{(10,5,4)}f=20$ since $\mathrm{ord}_{(2,1,1)}h\ge5$.
Then  for  the prime divisor $E_{(10,5,4)}$,
 we have $a(E_{(10,5,4)};A,(f))=a(E_{(10,5,4)};A,(\mathrm{in}_{(10,5,4)}f))=-1$.
Therefore $E_{(10,5,4)}$ computes $\mathrm{mld}(0; A, (f))$ and $\mathrm{mld}(0; A, (\mathrm{in}_{(10,5,4)}f))$ and we have
  $$\mathrm{mld}(0; A, (f))=\mathrm{mld}(0; A, (\mathrm{in}_{(10,5,4)}f))=-\infty.$$

Note that if  $\mathrm{ord}_{(2,1,1)}h=4$,  we have
$$\mathrm{in}_{(2,1,1)}h=\alpha_1xyz+\alpha_2xy^2+\alpha_3xz^2+\alpha_4y^4+\alpha_5y^3z+\alpha_6y^2z^2+\alpha_7yz^3+\alpha_8z^4$$
for some $\alpha_1,\dots,\alpha_8\in k$.
By a coordinate transformation 
$$y\mapsto \beta_1y+\beta_2z,\ z\mapsto \beta_3y+\beta_4z\ \mathrm{for\ some}\ \beta_1,\beta_2,\beta_3,\beta_4\in k,$$
we may assume that  $\alpha_3=0$.
By a coordinate transformation 
$$x\mapsto x+\gamma y^2\ \mathrm{for\ some}\ \gamma\in k,$$
we may assume that  $\alpha_4=0$.

We assume that 
$$\mathrm{in}_{(2,1,1)}h=a_1xyz+a_2xy^2+a_3y^3z+a_4y^2z^2+a_5yz^3+a_6z^4$$
for some $a_1,\dots,a_6\in k$.

\vskip.3truecm
\noindent
{\bf Step 2.} Assume   $a_1\neq0$.\\
The pairs $(A, (f))$ and $(A, (\mathrm{in}_{(2,1,1)}f))$ are F-pure at the origin by Proposition \ref{Fedder}.
Therefore $(A, (f))$ and $(A, (\mathrm{in}_{(2,1,1)}f))$ are log canonical at the origin by Theorem \ref{HW thm}.
Thus $E_{(2,1,1)}$ computes $\mathrm{mld}(0; A, (f))$ and $\mathrm{mld}(0; A, (\mathrm{in}_{(2,1,1)}f))$ and
we have
$$\mathrm{mld}(0; A, (f))=\mathrm{mld}(0; A, (\mathrm{in}_{(2,1,1)}f))=0.$$

\vskip.3truecm
\noindent
{\bf Step 3.} Assume  $a_1=0$ and $a_2\neq0$.\\
Then by a coordinate transformation
$$x\mapsto x+b_1yz,\ y\mapsto b_2y,\ z\mapsto b_3z\ \ \mathrm{for\ some}\ b_1,b_2,b_3\in k,$$
we may assume that 
$$\mathrm{in}_{(2,1,1)}f=x^2+xy^2+y^3z+c_1y^2z^2+c_2yz^3+c_3z^4$$
for some  $c_1,c_2,c_3\in k$.
Then $\mathrm{in}_{(2,1,1)}f$ is the defining equation of  a simple elliptic singularity (See \cite[Corollary 4.3]{H}).
By Proposition \ref{slc mld}, $\mathrm{mld}(0; A, (\mathrm{in}_{(2,1,1)}f))\ge 0$.
Therefore by Proposition \ref{mld initial}, 
$E_{(2,1,1)}$ computes $\mathrm{mld}(0; A, (f))$ and $\mathrm{mld}(0; A, (\mathrm{in}_{(2,1,1)}f))$ and 
we have
 $$\mathrm{mld}(0; A, (f))=\mathrm{mld}(0; A, (\mathrm{in}_{(2,1,1)}f))=0.$$

\vskip.3truecm
\noindent
{\bf Step 4.} Assume   $a_1=a_2=0$.\\
By a coordinates transformation 
$$x\mapsto x+b_1yz+b_2z^2\ \mathrm{for\ some}\ b_1,b_2\in k,$$  we may assume that 
$$\mathrm{in}_{(2,1,1)}f=x^2+b_3y^3z+b_4yz^3$$
for some  $b_3,b_4\in k$.
Then by a coordinate transformation 
$$y\mapsto c_1y+c_2z,\ z\mapsto c_3y+c_4z\ \mathrm{for\ some}\ c_1,c_2,c_3,c_4\in k,$$
  we may assume that 
$$\mathrm{in}_{(2,1,1)}f=x^2+c_5y^2z^2+c_6y^3z$$
for some  $c_5,c_6\in k$.
Then by a coordinate transformation 
$$x\mapsto x+dyz\ \mathrm{for\ some}\ d\in k,$$  we may assume that 
$$\mathrm{in}_{(2,1,1)}f=x^2+ey^3z$$
for some  $e\in k$.
Note that $\mathrm{ord}_{(15,8,6)}f=30$ since $\mathrm{in}_{(2,1,1)}f=x^2+ey^3z$.
Then for  the prime divisor $E_{(15,8,6)}$,
$a(E_{(15,8,6)};A,(f))=a(E_{(15,8,6)};A,(\mathrm{in}_{(15,8,6)}f))=-1$.
Hence 
$E_{(15,8,6)}$ computes $\mathrm{mld}(0; A, (f))$ and
$\mathrm{mld}(0; A, (\mathrm{in}_{(15,8,6)}f))$ and 
we have
 $$\mathrm{mld}(0; A, (f))=\mathrm{mld}(0; A, (\mathrm{in}_{(15,8,6)}f))=-\infty.$$

\end{proof}

The following lemma will be used in  the proof of Proposition \ref{key prop}.

\begin{lem}\label{ord 4 chk neq 2}
Let $A=\mathrm{Spec}k[[x,y,z]]$,  $0$ be the origin of $A$ and $f\in k[[x,y,z]]$.
Suppose that  $\mathrm{in}_{(1,1,1)}f=\mathrm{in}_{(3,2,2)}f=x^2$
 and $\mathrm{ch}k\neq 2$. 
Then there exist a $k$-automorphism  $\phi$ of  $k[[x,y,z]]$ and $w=(w_1,w_2,w_3)\in \mathbb N^3$ such that 
$E_w$ computes  $\mathrm{mld}\big(0;A,(\phi(f))\big)$ and $\mathrm{mld}\big(0;A,(\mathrm{in}_w\phi(f))\big)$,
$$\mathrm{mld}(0;A,(f))=\mathrm{mld}\big(0;A,(\mathrm{in}_w\phi(f))\big)$$
and $w$ is  one of the following:
\begin{align*}
\mathrm{(i)}&\ (10,5,4),& \mathrm{(ii)}&\ (2,1,1),& \mathrm{(iii)}&\ (15,8,6).
\end{align*}
In  case $\mathrm{(ii)}$, $\mathrm{mld}(0;A,(f))=0.$
In  case $\mathrm{(i)\ and\  (iii)}$, $\mathrm{mld}(0;A,(f))=-\infty.$
\end{lem}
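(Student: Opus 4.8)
The plan is to mirror the structure of Lemma~\ref{ord 4 chk = 2} almost verbatim, replacing the positive-characteristic $F$-purity arguments in Steps~2 and~3 with characteristic-$0$-style (or at least $\mathrm{ch}\,k\neq 2$) reductions. As in the previous lemma, write $f = x^2 + h$ with $\mathrm{ord}_{(2,1,1)}h \ge 4$ (forced by $\mathrm{in}_{(1,1,1)}f = \mathrm{in}_{(3,2,2)}f = x^2$). The first step handles $\mathrm{ord}_{(2,1,1)}h \ge 5$: here $\mathrm{ord}_{(10,5,4)}f = 20$, so $E_{(10,5,4)}$ gives $a(E_{(10,5,4)};A,(f)) = a(E_{(10,5,4)};A,(\mathrm{in}_{(10,5,4)}f)) = -1 < 0$, and by Lemma~\ref{slc order 3}-type reasoning (or directly, since the discrepancy is already negative and equal to the one on the initial term, and any lower value transfers via Proposition~\ref{mld initial}) we get $\mathrm{mld}(0;A,(f)) = \mathrm{mld}(0;A,(\mathrm{in}_{(10,5,4)}f)) = -\infty$, which is case~(i).

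So assume $\mathrm{ord}_{(2,1,1)}h = 4$. Then $\mathrm{in}_{(2,1,1)}h$ is a $(2,1,1)$-weighted-degree-$4$ form, i.e.\ a linear combination of $xyz, xy^2, xz^2, y^4, y^3z, y^2z^2, yz^3, z^4$. Exactly as before, a linear change in $y,z$ kills the $xz^2$ coefficient and then a shift $x \mapsto x + \gamma y^2$ kills the $y^4$ coefficient, so $\mathrm{in}_{(2,1,1)}f = x^2 + a_1 xyz + a_2 xy^2 + a_3 y^3z + a_4 y^2z^2 + a_5 yz^3 + a_6 z^4$. The binary form $\psi(y,z) := a_3 y^3z + a_4 y^2 z^2 + a_5 yz^3 + a_6 z^4 = z\,(a_3 y^3 + a_4 y^2 z + a_5 y z^2 + a_6 z^3)$ always has $z$ as a factor; since $\mathrm{ch}\,k \neq 2$ one can additionally complete the square in $x$ to absorb the $x(\cdots)$ terms: $x \mapsto x - \tfrac12(a_1 yz + a_2 y^2)$ brings us to $\mathrm{in}_{(2,1,1)}f = x^2 + \tilde\psi(y,z)$ with $\tilde\psi$ a binary quartic divisible by... \emph{not necessarily} $z$ anymore, so one instead keeps track of the multiplicity structure of the quartic $x^2 + \psi(y,z)$ (a double point with tangent cone determined by the roots of $\psi$). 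The key dichotomy is whether $\psi$ has a root of multiplicity $\le 3$ or is (up to scalar) $y^3 z$ or $yz^3$ or has a triple root.

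The remaining cases then split as: (a) if $a_1 \neq 0$, or more generally if $x^2 + \mathrm{in}_{(2,1,1)}h$ already cuts out a log canonical (Du Val or simple elliptic) surface singularity — which happens precisely when $\psi$ has no root of multiplicity $\ge 3$, the simple-elliptic case being $\psi$ with four distinct roots, handled as in Step~3 of Lemma~\ref{ord 4 chk = 2} via $\mathrm{in}_{(2,1,1)}f$ being a simple elliptic equation (cf.\ \cite[Corollary 4.3]{H} or the classical $\tilde D$/$\tilde E$ classification, valid since $\mathrm{ch}\,k\neq 2$) — then $\mathrm{mld}(0;A,(\mathrm{in}_{(2,1,1)}f)) \ge 0$, so Proposition~\ref{mld initial} forces $\mathrm{mld}(0;A,(f)) \ge 0$, while $a(E_{(2,1,1)};A,(f)) = 0$, giving case~(ii) with $\mathrm{mld} = 0$; (b) otherwise $\psi$ has a root of multiplicity $\ge 3$, so after a linear change of $y,z$ we may take $\psi \in \{0, y^3z\}$ (a quadruple root being absorbable into a further change, or pushing $\mathrm{ord}_{(2,1,1)}$ up to $\ge 5$, back to Step~1), i.e.\ completing the square and normalizing we reach $\mathrm{in}_{(2,1,1)}f = x^2 + e\,y^3 z$, whence $\mathrm{ord}_{(15,8,6)}f = 30$ and $E_{(15,8,6)}$ gives discrepancy $-1$, yielding case~(iii) with $\mathrm{mld} = -\infty$.

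I expect the main obstacle to be the bookkeeping in step~(b): verifying that once $a_1 = a_2 = 0$ (or after completing the square when $\mathrm{ch}\,k \neq 2$) the binary quartic $\psi(y,z)$ can be brought by a linear $GL_2$ action and further $x$-shifts to the normal form $e\,y^3z$ without accidentally raising the $(2,1,1)$-order (in which case one returns to Step~1), and that no new $(2,1,1)$-degree-$4$ monomials reappear under these substitutions. This is the same normalization performed in Step~4 of Lemma~\ref{ord 4 chk = 2}, and since $\mathrm{ch}\,k \neq 2$ the completion of the square makes it, if anything, cleaner; the one genuinely characteristic-sensitive point to double-check is that the simple-elliptic classification and the criterion ``$\psi$ has $\ge 3$-fold root $\iff$ not log canonical'' used in the dichotomy hold in all characteristics $\neq 2$, for which one can invoke \cite[Corollary 4.3]{H} and the discrepancy computation already done, rather than Fedder's criterion.
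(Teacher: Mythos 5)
Your overall strategy coincides with the paper's (same weights, same reduction of $\mathrm{in}_{(2,1,1)}f$ to $x^2+\psi(y,z)$ by completing the square, same case division by the root multiplicities of the binary quartic $\psi$, and the same divisor computations $E_{(10,5,4)}$, $E_{(15,8,6)}$ for the negative cases), but there is a genuine gap in your case (a). When $\psi$ has no root of multiplicity $\ge 3$ but is not squarefree — i.e.\ $\psi=y^2z^2$ or $\psi=y^2z(y+z)$ after a linear change — the hypersurface $x^2+\psi=0$ is \emph{not} a Du Val or simple elliptic singularity: its singular locus contains the line $x=y=0$, so it is not even normal. Hence neither \cite[Corollary 4.3]{H} nor the classical $\tilde D/\tilde E$ classification applies, and no ``discrepancy computation already done'' in your argument shows that $(A,(x^2+y^2z^2))$ is log canonical at the origin. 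This is exactly the point where the paper cannot avoid extra input: it reduces $x^2+y^2z(y+z)$ to $x^2+y^2z^2$ via $\mathrm{in}_{(3,2,1)}$ and Proposition~\ref{mld initial}, and then invokes the Liu--Rollenske classification \cite{LR} in characteristic $0$ and Fedder's criterion plus Hara--Watanabe (Proposition~\ref{Fedder}, Theorem~\ref{HW thm}) in characteristic $p>2$ — so the $F$-purity argument you explicitly propose to discard is genuinely used in this lemma for odd $p$. To close the gap you must either keep that argument or supply a direct proof (e.g.\ via Proposition~\ref{slc mld} and an explicit semi-resolution/normalization of the degenerate cusp $x^2+y^2z^2$) valid in every characteristic $\neq 2$; as written, the implication ``no root of multiplicity $\ge 3$ $\Rightarrow$ log canonical'' is asserted, not proved.

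A secondary slip is your treatment of the quadruple-root case $\psi=cy^4$, $c\neq 0$: after completing the square it can neither be ``absorbed into a further change'' (any substitution $x\mapsto x+\beta y^2$ killing $y^4$ reintroduces $xy^2$, which still has $(2,1,1)$-order $4$) nor be shown to raise $\mathrm{ord}_{(2,1,1)}h$ to $\ge 5$, so it does not return to your Step 1 as claimed. The correct and easy treatment is the paper's: note $\mathrm{ord}_{(10,5,4)}f=20$ directly, so $a(E_{(10,5,4)};A,(f))=a(E_{(10,5,4)};A,(\mathrm{in}_{(10,5,4)}f))=-1$ and one lands in case (i) with $\mathrm{mld}=-\infty$; this is why case (i) occurs in the statement beyond the subcase $\mathrm{ord}_{(2,1,1)}h\ge 5$. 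With that computation inserted and the double-root cases repaired as above, your argument matches the paper's proof.
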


\begin{proof}
Let $h$ be the element of $k[[x,y,z]]$ such that $f=x^2+h$.
Note that $\mathrm{ord}_{(2,1,1)}f\ge 4$ since
$\mathrm{in}_{(1,1,1)}f=\mathrm{in}_{(3,2,2)}f=x^2$.
Therefore $\mathrm{ord}_{(2,1,1)}h\ge 4$.

\noindent
{\bf Step 1.} Assume $\mathrm{ord}_{(2,1,1)}h\ge 5$.\\
In the same way as Step 1 in the proof of Lemma \ref{ord 4 chk = 2}, we can prove that
 $E_{(10,5,4)}$ computes $\mathrm{mld}(0; A, (f))$ and $\mathrm{mld}(0; A, (\mathrm{in}_{(10,5,4)}f))$ and we have
  $$\mathrm{mld}(0; A, (f))=\mathrm{mld}(0; A, (\mathrm{in}_{(10,5,4)}f))=-\infty.$$

\vskip.3truecm
Note that if   $\mathrm{ord}_{(2,1,1)}h=4$, we may assume that 
$$\mathrm{in}_{(2,1,1)}h=y^4,\ y^3z, \ y^2z^2,\   y^2z(y+z)\ \mathrm{or}\  yz(y+z)(y+az)\ \mathrm{for\ }\ a\neq 0,1.$$

In fact, since $\mathrm{ch}k\neq 2$, we may assume that $\mathrm{in}_{(2,1,1)}h$ is a homogeneous polynomial of $y$ and $z$.
Therefore, we can factor $\mathrm{in}_{(2,1,1)}h$ in linear terms.
Hence we may assume that for some $a_1,\dots,a_8\in k$,
$$\mathrm{in}_{(2,1,1)}h=(a_1y+a_2z)(a_3y+a_4z)(a_5y+a_6z)(a_7y+a_8z).$$
By a coordinates transformation
$$x\mapsto b_1x,\  y\mapsto b_2y+b_3z,\ z\mapsto b_4y+b_5z\ \mathrm{for\ some}\ b_1,\dots,b_5\in k,$$
  we may assume that  $\mathrm{for\ some}\ c\neq 0, a\neq 0,1$
$$\mathrm{in}_{(2,1,1)}cf=x^2+y^4,\ x^2+y^3z, \ x^2+y^2z^2,\   x^2+y^2z(y+z)\ \mathrm{or}\  x^2+yz(y+z)(y+az).$$
Since $\mathrm{mld}(0;A,(g))=\mathrm{mld}(0;A,(dg))$ for any $d\in k\setminus\{0\}$ and any $g\in(x,y,z)\setminus\{0\}$,
we may assume that 
$$\mathrm{in}_{(2,1,1)}h=y^4,\ y^3z, \ y^2z^2,\   y^2z(y+z)\ \mathrm{or}\  yz(y+z)(y+az)\ \mathrm{for\ }\ a\neq 0,1.$$

\vskip.3truecm
\noindent
{\bf Step 2.} Assume  $\mathrm{in}_{(2,1,1)}h=yz(y+z)(y+az)\ \mathrm{for\ }\ a\neq 0,1.$\\
Then $\mathrm{in}_{(2,1,1)}f$ is the defining equation of  a simple elliptic singularity (See  \cite[Corollary 4.3]{H} and \cite[Theorem 7.6.4]{I book}). 
By Proposition \ref{slc mld}, $\mathrm{mld}(0; A, (\mathrm{in}_{(2,1,1)}f))\ge 0$.
Therefore by Proposition \ref{mld initial}, 
$E_{(2,1,1)}$ computes $\mathrm{mld}(0; A, (f))$ and $\mathrm{mld}(0; A, (\mathrm{in}_{(2,1,1)}f))$ and we have
 $$\mathrm{mld}(0; A, (f))=\mathrm{mld}(0; A, (\mathrm{in}_{(2,1,1)}f))=0.$$

\vskip.3truecm
\noindent
{\bf Step 3.} Assume   $\mathrm{in}_{(2,1,1)}h=y^2z^2\ \mathrm{or}\ \  y^2z(y+z)$.\\
{\bf Step 3-1.}  Assume  $\mathrm{ch}k=0$.\\
$\mathrm{Spec}k[[x,y,z]]/(x^2+y^2z^2)$ is semi-log canonical at the origin by \cite[Main theorem]{LR}.
By Proposition \ref{slc mld}, $\mathrm{mld}(0; A, (x^2+y^2z^2))\ge0$.
Note that $\mathrm{in}_{(3,2,1)}(x^2+y^2z(y+z))=x^2+y^2z^2$.
By Proposition \ref{mld initial}, 
$$\mathrm{mld}(0; A, (f))\ge\mathrm{mld}(0; A, (\mathrm{in}_{(2,1,1)}f))\ge\mathrm{mld}(0; A, (x^2+y^2z^2))\ge0.$$
Therefore
$E_{(2,1,1)}$ computes $\mathrm{mld}(0; A, (f))$ and $\mathrm{mld}(0; A, (\mathrm{in}_{(2,1,1)}f))$ and
we have
  $$\mathrm{mld}(0; A, (f))=\mathrm{mld}(0; A, (\mathrm{in}_{(2,1,1)}f))=0.$$

\noindent
{\bf Step 3-2.} Assume   $\mathrm{ch}k\neq 0$.\\
Then the pair $(A, (x^2+y^2z^2))$ is F-pure at the origin by Proposition \ref{Fedder}.
Therefore $(A, (x^2+y^2z^2))$ is log canonical at the origin by Theorem \ref{HW thm}.
Hence in the same way as Step 3-1, we can prove  that 
$E_{(2,1,1)}$ computes $\mathrm{mld}(0; A, (f))$ and $\mathrm{mld}(0; A, (\mathrm{in}_{(2,1,1)}f))$ and
we have
  $$\mathrm{mld}(0; A, (f))=\mathrm{mld}(0; A, (\mathrm{in}_{(2,1,1)}f))=0.$$

\vskip.3truecm
\noindent
{\bf Step 4.} Assume   $\mathrm{in}_{(2,1,1)}h=y^3z$.\\
Note that $\mathrm{ord}_{(15,8,6)}f=30$ since $\mathrm{in}_{(2,1,1)}f=x^2+y^3z$.
Then for  the prime divisor $E_{(15,8,6)}$,
$a(E_{(15,8,6)};A,(f))=a(E_{(15,8,6)};A,(\mathrm{in}_{(15,8,6)}f))=-1$.
Hence 
$E_{(15,8,6)}$ computes $\mathrm{mld}(0; A, (f))$ and $\mathrm{mld}(0; A, (\mathrm{in}_{(15,8,6)}f))$ and
we have
  $$\mathrm{mld}(0; A, (f))=\mathrm{mld}(0; A, (\mathrm{in}_{(15,8,6)}f))=-\infty.$$

\vskip.3truecm
\noindent
{\bf Step 5.} Assume   $\mathrm{in}_{(2,1,1)}h=y^4$.\\
Note that $\mathrm{ord}_{(10,5,4)}f=20$ since $\mathrm{in}_{(2,1,1)}f=x^2+y^4$.
Then for  the prime divisor $E_{(10,5,4)}$,
$a(E_{(10,5,4)};A,(f))=a(E_{(10,5,4)};A,(\mathrm{in}_{(10,5,4)}f))=-1$.
Hence 
$E_{(10,5,4)}$ computes $\mathrm{mld}(0; A, (f))$ and $\mathrm{mld}(0; A, (\mathrm{in}_{(10,5,4)}f))$ and 
we have
 $$\mathrm{mld}(0; A, (f))=\mathrm{mld}(0; A, (\mathrm{in}_{(10,5,4)}f))=-\infty.$$

\end{proof}

\begin{prop}\label{key prop}
Let $A=\mathrm{Spec}k[[x,y,z]]$,  $0$ be the origin of $A$ and $f\in k[[x,y,z]]$.
Suppose that $\mathrm{ord}_{(1,1,1)}f=2$. 
Then there exist a $k$-automorphism  $\phi$ of $k[[x,y,z]]$ and $w=(w_1,w_2,w_3)\in \mathbb N^3$ such that 
$E_w$ computes  $\mathrm{mld}\big(0;A,(\phi(f))\big)$ and $\mathrm{mld}\big(0;A,(\mathrm{in}_w\phi(f))\big)$,
$$\mathrm{mld}(0;A,(f))=\mathrm{mld}\big(0;A,(\mathrm{in}_w\phi(f))\big)$$
and $w$ is  one of the following:
\begin{align*}
\mathrm{(i)}&\ (1,1,1),& \mathrm{(ii)}&\ (3,2,2),&\mathrm{(iii)}&\ (2,1,1),& \mathrm{(iv)}&\ (6,4,3),& \mathrm{(v)}&\ (9,6,4),\\
\mathrm{(vi)}&\ (15,10,6),& \mathrm{(vii)}&\ (3,2,1),&\mathrm{(viii)}&\ (10,5,4),& \mathrm{(ix)}&\ (15,8,6),& \mathrm{(x)}&\ (21,14,6).
\end{align*}
In  case $\mathrm{(i),(ii)},\dots,\mathrm{(vii)}$, $\mathrm{mld}(0;A,(f))\ge 0.$
In  case $\mathrm{(viii)}$, $\mathrm{(ix)}$,  $\mathrm{(x)}$, $\mathrm{mld}(0;A,(f))=-\infty.$
\end{prop}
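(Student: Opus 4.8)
The plan is to repeatedly normalize $f$ by $k$-automorphisms $\phi$ --- which leave $\mathrm{mld}(0;A,(f))$ unchanged, since $\phi$ fixes the origin --- until some $\mathrm{in}_w\phi(f)$, with $w$ in the list, defines a hypersurface whose minimal log discrepancy we can read off, and then to transfer the conclusion back to $f$ by combining Proposition~\ref{mld initial}, which gives $\mathrm{mld}(0;A,(f))\ge\mathrm{mld}(0;A,(\mathrm{in}_w\phi(f)))$, with the elementary identity $a(E_w;A,(\phi(f)))=a(E_w;A,(\mathrm{in}_w\phi(f)))=(w_1+w_2+w_3)-\mathrm{ord}_w\phi(f)$. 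Thus, once a suitable $w$ is found: if $a(E_w;A,(\phi(f)))<0$ then both minimal log discrepancies are $-\infty$ and $E_w$ computes them; while if $a(E_w;A,(\phi(f)))\ge 0$ and one independently knows $\mathrm{mld}(0;A,(\mathrm{in}_w\phi(f)))\ge 0$, the two inequalities squeeze $\mathrm{mld}(0;A,(f))=\mathrm{mld}(0;A,(\mathrm{in}_w\phi(f)))=a(E_w;A,(\phi(f)))$, and $E_w$ computes both.

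Since $\mathrm{ord}_{(1,1,1)}f=2$, the initial form $q:=\mathrm{in}_{(1,1,1)}f$ is a nonzero quadratic form, which a linear change of variables puts into normal form. If $q$ has rank $\ge 2$, then $\mathrm{Spec}k[[x,y,z]]/(q)$ is an $A_1$ rational double point or the normal crossing of two hyperplanes, so $\mathrm{mld}(0;A,(q))=1$; as $a(E_{(1,1,1)};A,(f))=3-2=1$, the squeeze yields case~(i). If $q$ has rank $1$ we normalize it to $x^2$, so $\mathrm{in}_{(1,1,1)}f=x^2$; since the only monomials of $f$ of $(3,2,2)$-weight at most $6$ are $x^2$ and the monomials of a binary cubic in $y,z$, we have $\mathrm{in}_{(3,2,2)}f=x^2+C(y,z)$ with $C$ a binary cubic. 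When $C=0$ the pair is exactly in the hypothesis of Lemma~\ref{ord 4 chk = 2} (if $\mathrm{ch}\,k=2$) or Lemma~\ref{ord 4 chk neq 2} (if $\mathrm{ch}\,k\neq2$), giving $w\in\{(10,5,4),(2,1,1),(15,8,6)\}$, i.e.\ cases~(viii),(iii),(ix). When $C\neq0$ is not proportional to the cube of a linear form, $\mathrm{Spec}k[[x,y,z]]/(x^2+C)$ is a $D_4$ double point (three distinct roots) or a pinch-point-type hypersurface (one double root), each with $\mathrm{mld}=1$; combined with $a(E_{(3,2,2)};A,(f))=7-6=1$ this gives case~(ii).

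The remaining --- and hardest --- possibility is that $C$ is a cube, so after a further change of variables $\mathrm{in}_{(3,2,2)}f=x^2+y^3$. Here one runs a descent along the Newton polyhedron of $f$ in the spirit of the $E_6$--$E_7$--$E_8$--simple-elliptic hierarchy: inspecting the weighted initial forms for the weights on the relevant faces (completing the square in $x$ when $\mathrm{ch}\,k\neq2$, and replacing this by the analogous normal-form bookkeeping when $\mathrm{ch}\,k=2$), one lands on $x^2+y^3+z^4$ ($E_6$, $w=(6,4,3)$, case~(iv)), $x^2+y^3+yz^3$ ($E_7$, $w=(9,6,4)$, case~(v)), or $x^2+y^3+z^5$ ($E_8$, $w=(15,10,6)$, case~(vi)) --- all rational double points with $\mathrm{mld}=1=a(E_w;A,(f))$ --- or on the degree-one simple elliptic (or cusp) singularity $x^2+y^3+z^6$-type with $w=(3,2,1)$, $\mathrm{mld}=0=a(E_w;A,(f))$ (case~(vii)), or, if all the terms needed for these can be eliminated, on a model with $\mathrm{ord}_{(21,14,6)}f=42$, so $a(E_{(21,14,6)};A,(f))=41-42=-1<0$ and $\mathrm{mld}(0;A,(f))=-\infty$ (case~(x)). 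The main obstacle is precisely this descent: one must keep exact track of which monomials can be removed by admissible coordinate changes at each weight so as to land on one of the listed weighted-homogeneous equations, and then certify that the resulting model is log canonical or semi-log canonical, \emph{uniformly in the characteristic} --- using the $ADE$ and simple-elliptic classifications, Fedder's criterion (Proposition~\ref{Fedder}) with Theorem~\ref{HW thm} in characteristic $p$, and the classification of two-dimensional semi-log canonical hypersurfaces in characteristic $0$. The uniformity is delicate in characteristics $2$ and $3$, where rational double points and simple elliptic singularities have nonstandard equations, completing the square is unavailable, and even the codimension-one structure (nodal versus worse) requires separate care.
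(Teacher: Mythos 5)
Your outline does follow the paper's strategy: the same list of weights, the reduction $\mathrm{mld}(0;A,(f))\ge\mathrm{mld}(0;A,(\mathrm{in}_w\phi(f)))$ from Proposition~\ref{mld initial} combined with $a(E_w;A,(\phi(f)))=a(E_w;A,(\mathrm{in}_w\phi(f)))$, the appeal to Lemmas~\ref{ord 4 chk = 2} and \ref{ord 4 chk neq 2} when $\mathrm{in}_{(3,2,2)}\phi(f)=x^2$, and the tools (Fedder plus Theorem~\ref{HW thm} in characteristic $p$, Liu--Rollenske in characteristic $0$, Artin's and Hirokado's classifications). But the substance of the paper's proof is exactly the part you defer as ``the main obstacle'': the explicit descent through the weights $(6,4,3),(9,6,4),(15,10,6),(3,2,1),(21,14,6)$, with coordinate changes checked not to disturb the previously fixed initial forms, and a characteristic-uniform certification at each stage. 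In particular the entire analysis at $w=(3,2,1)$ --- where the initial form is $x^2+y^3+a_1xyz+a_2xz^3+a_3z^6+a_4yz^4+a_5y^2z^2$, and one needs F-purity when $a_1\neq 0$ in characteristic $2$, the simple elliptic criterion when $a_1=0,a_2\neq 0$, and the degenerate cases $\delta\in\{0,1\}$ of $x^2+y(y-z^2)(y-\delta z^2)$ handled by Liu--Rollenske in characteristic $0$ and Fedder in characteristic $p$ --- is precisely what has to be written down, and your proposal does not do it. Leaving this as ``normal-form bookkeeping'' is a genuine gap, not a routine verification.

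Moreover, where you do commit to normal forms, the characteristic-$2$ case breaks. At $w=(6,4,3)$ the weight-$12$ part is $x^2+y^3+a_1xz^2+a_2z^4$, and in characteristic $2$ you cannot reduce it to $x^2+y^3+z^4$; worse, $x^2+y^3+z^4$ is not a form you want to land on there, since in characteristic $2$ the substitution $x\mapsto x+z^2$ turns it into $x^2+y^3$, which has non-isolated singularities and $\mathrm{mld}=-\infty$, not $1$. The paper instead solves $b^2+a_1b+a_2=0$ (possible over the algebraically closed $k$ in every characteristic) to normalize to $x^2+y^3+xz^2$, a rational double point uniformly in the characteristic; an analogous care is needed for your rank argument on the quadric in characteristic $2$, which the paper handles via the forms $x^2+xy$ and $x^2+axy+bxz+yz$ and the weight $(2,1,1)$. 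Finally, a smaller logical point: your general ``squeeze'' --- from $a(E_w;A,(\phi(f)))\ge 0$ and $\mathrm{mld}(0;A,(\mathrm{in}_w\phi(f)))\ge 0$ conclude that both mlds equal $a(E_w;A,(\phi(f)))$ --- is not valid as stated; those two hypotheses only place both mlds in the interval $[0,a(E_w;A,(\phi(f)))]$. What makes the argument close, and what must be supplied case by case (as the paper does), is the exact value $\mathrm{mld}(0;A,(\mathrm{in}_w\phi(f)))=a(E_w;A,(\phi(f)))$: equal to $1$ in the rational double point cases via inversion of adjunction, and equal to $0$ in the log canonical boundary cases because there $a(E_w;A,(\phi(f)))=0$.
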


\begin{proof}

\noindent
{\bf Step 1.} Let $w_1=(1,1,1)$. We will check  $\mathrm{in}_{w_1}(f)$.\\
\noindent
{\bf Step 1-1.} Assume  $\mathrm{ch}k\neq 2$.\\
Then there exists a $k$-automorphism  $\phi$ of $k[[x,y,z]]$ such that
$\mathrm{in}_{w_1}\phi(f)=x^2,\ x^2+y^2\ \mathrm{or}\ x^2+y^2+z^2$.
Note that
$$\mathrm{mld}(0; A, (x^2))=-\infty,\ \mathrm{mld}(0; A, (x^2+y^2))=\mathrm{mld}(0; A, (x^2+y^2+z^2))=1.$$
By Proposition \ref{mld initial}, if $\mathrm{in}_{w_1}(f)=x^2+y^2\ \mathrm{or}\ x^2+y^2+z^2$, then 
 $E_{w_1}$ computes $\mathrm{mld}\big(0;A,(f)\big)$ and $\mathrm{mld}\big(0;A,(\mathrm{in}_{w_1}f)\big)$ and
we have
 $$\mathrm{mld}(0;A,(f))=\mathrm{mld}\big(0;A,(\mathrm{in}_wf)\big)=1.$$

\noindent
{\bf Step 1-2.} Assume  $\mathrm{ch}k= 2$. \\
Then there exists a $k$-automorphism  $\phi$ of $k[[x,y,z]]$ such that
$\mathrm{in}_{w_1}\phi(f)=x^2,\ x^2+xy\ \mathrm{or}\ x^2+axy+bxz+yz\ \mathrm{for\ some}\ a,b\in k$.
Note that
$$\mathrm{in}_{(2,1,1)}(x^2+xy)=xy,\  \mathrm{in}_{(2,1,1)}( x^2+axy+bxz+yz)=yz,\  \mathrm{mld}(0; A, (xy))=1.$$
By Proposition \ref{mld initial}, we have 
$$\mathrm{mld}(0; A, (x^2+xy))=\mathrm{mld}(0; A, (x^2+axy+bxz+yz))=1.$$
By Proposition \ref{mld initial}, if $\mathrm{in}_{w_1}(f)=x^2+xy\ \mathrm{or}\ x^2+axy+bxz+yz$, then 
 $E_{w_1}$ computes $\mathrm{mld}\big(0;A,(f)\big)$ and $\mathrm{mld}\big(0;A,(\mathrm{in}_{w_1}f)\big)$ and
we have
 $$\mathrm{mld}(0;A,(f))=\mathrm{mld}\big(0;A,(\mathrm{in}_wf)\big)=1.$$

\vskip.3truecm
\noindent
{\bf Step 2.} Assume  $\mathrm{in}_{w_1}f=x^2$.\\
 Let $w_2=(3,2,2)$.
Note that $\mathrm{ord}_{w_2}f=6$ since $\mathrm{in}_{w_1}f=x^2$.
By a coordinates transformation
$$y\mapsto c_1y+c_2z,\ z\mapsto c_3y+c_4z\ \mathrm{for\ some}\ c_1,c_2,c_3,c_4\in k,$$
  we may assume that  
$$\mathrm{in}_{w_2}f=x^2,\ x^2+y^3,\ x^2+y^2z\ \mbox{or}\ x^2+yz(y+az)\ \ \mbox{for\ some}\ a\in k\setminus\{0\}.$$
Note that this coordinate transformation does not change $\mathrm{in}_{w_1}f=x^2$.

\noindent
{\bf Step 2-1.}
Assume  $\mathrm{in}_{w_2}f=x^2$.\\
In this case, this proposition holds by Lemma \ref{ord 4 chk = 2} and Lemma \ref{ord 4 chk neq 2}. 

\vskip.2truecm
\noindent
{\bf Step 2-2.} Assume   $\mathrm{in}_{w_2}f=x^2+y^2z$ or $x^2+yz(y+az)$ for some $a\in k\setminus\{0\}.$\\
Since $\mathrm{in}_{(2,1,2)}(x^2+yz(y+az))=x^2+y^2z$ and $\mathrm{mld}(0; A, (x^2+y^2z))=1$ (for example, see \cite[Theorem 4.8,(ii)]{IR2}),
we have
$\mathrm{mld}(0; A, (x^2+yz(y+az)))=1$ by Proposition \ref{mld initial}.
Therefore by Proposition \ref{mld initial}, $E_{w_2}$ computes $\mathrm{mld}(0;A,(f))$ and $\mathrm{mld}(0;A,(\mathrm{in}_{w_2}f))$ and we have
 $$\mathrm{mld}(0;A,(f))=\mathrm{mld}(0;A,(\mathrm{in}_{w_2}f))=1.$$

\vskip.3truecm
\noindent
{\bf Step 3.} Assume $\mathrm{in}_{w_1}f=x^2$ and $\mathrm{in}_{w_2}f=x^2+y^3$.\\
 Let $w_3=(6,4,3)$.
Note that $\mathrm{ord}_{w_3}f=12$ since $\mathrm{in}_{w_1}f=x^2$ and $\mathrm{in}_{w_2}f=x^2+y^3$.
Therefore 
$$\mathrm{in}_{w_3}f=x^2+y^3+a_1xz^2+a_2z^4\ \ \mathrm{for\ some}\ a_1,a_2\in k.$$
By a coordinates transformation
 $x\mapsto x+bz^2$ for some $b\in k$ with $b^2+a_1b+a_2=0,$
  we may assume that 
$\mathrm{in}_{w_3}f=x^2+y^3+cxz^2$ for some $c\in k$.
Therefore by a coordinates transformation $z\mapsto dz$ for some $d\in k\setminus \{0\}$,
 we may assume that 
$$\mathrm{in}_{w_3}f=x^2+y^3\ \ \mbox{or}\ \ x^2+y^3+xz^2.$$
Note that these coordinate transformation do not change $\mathrm{in}_{w_1}f=x^2$ and $\mathrm{in}_{w_2}f=x^2+y^3$.
Since $x^2+y^3+xz^2$ is the defining equation of a rational double point (See \cite[Section 3]{A}),
   by Theorem \ref{inversion of adjunction},  we have
$$\mathrm{mld}(0; A, (x^2+y^3+xz^2))=1.$$
Therefore by Proposition \ref{mld initial}, if $\mathrm{in}_{w_3}f=x^2+y^3+xz^2$, $E_{w_3}$ computes $\mathrm{mld}(0;A,(f))$ and $\mathrm{mld}(0;A,(\mathrm{in}_{w_3}f))$ and we have
 $$\mathrm{mld}(0;A,(f))=\mathrm{mld}(0;A,(\mathrm{in}_{w_3}f))=1.$$

\vskip.3truecm
\noindent
{\bf Step 4.} Assume $\mathrm{in}_{w_1}f=x^2$ and $\mathrm{in}_{w_i}f=x^2+y^3$ $(i=2,3)$.\\
 Let $w_4=(9,6,4)$.
Note that $\mathrm{ord}_{w_4}f=18$ since $\mathrm{in}_{w_1}f=x^2$ and $\mathrm{in}_{w_i}f=x^2+y^3$ $(i=2,3)$.
Therefore 
$$\mathrm{in}_{w_4}f=x^2+y^3+ayz^3\ \mathrm{for\ some}\ a\in k.$$
Therefore by a coordinates transformation
 $z\mapsto bz$ for some $b\in k\setminus\{0\}$,
we may assume that 
$$\mathrm{in}_{w_4}f=x^2+y^3\ \ \mbox{or}\ \ x^2+y^3+yz^3.$$
Note that this coordinate transformation does not change $\mathrm{in}_{w_1}f=x^2$ and $\mathrm{in}_{w_i}f=x^2+y^3$ $(i=2,3)$.
Since $x^2+y^3+yz^3$ is the defining equation of a rational double point (See \cite[Section 3]{A}),
  by Theorem \ref{inversion of adjunction},  we have
$$\mathrm{mld}(0; A, (x^2+y^3+yz^3))=1.$$
Therefore by Proposition \ref{mld initial}, if $\mathrm{in}_{w_4}f=x^2+y^3+yz^3$, $E_{w_4}$ computes $\mathrm{mld}(0;A,(f))$ and $\mathrm{mld}(0;A,(\mathrm{in}_{w_4}f))$ and we have
 $$\mathrm{mld}(0;A,(f))=\mathrm{mld}(0;A,(\mathrm{in}_{w_4}f))=1.$$

\vskip.3truecm
\noindent
{\bf Step 5.} Assume $\mathrm{in}_{w_1}f=x^2$ and $\mathrm{in}_{w_i}f=x^2+y^3$ $(i=2,3,4)$.\\
 Let $w_5=(15,10,6)$.
Note that $\mathrm{ord}_{w_5}f=30$ since $\mathrm{in}_{w_1}f=x^2$ and $\mathrm{in}_{w_i}f=x^2+y^3$ $(i=2,3,4)$.
Therefore 
$$\mathrm{in}_{w_5}f=x^2+y^3+az^5\ \mathrm{for\ some}\ a\in k.$$
Therefore by a coordinates transformation
 $z\mapsto bz$ for some $b\in k\setminus\{0\}$,
we may assume that 
$$\mathrm{in}_{w_5}f=x^2+y^3\ \mbox{or}\ x^2+y^3+z^5.$$
Note that this coordinate transformation does not change $\mathrm{in}_{w_1}f=x^2$ and $\mathrm{in}_{w_i}f=x^2+y^3$ $(i=2,3,4)$.
Since $x^2+y^3+z^5$ is the defining equation of a rational double point (See \cite[Section 3]{A}),
 by Theorem \ref{inversion of adjunction},  we have
$$\mathrm{mld}(0; A, (x^2+y^3+z^5))=1.$$
Therefore by Proposition \ref{mld initial}, if $\mathrm{in}_{w_5}f=x^2+y^3+z^5$, $E_{w_5}$ computes $\mathrm{mld}(0;A,(f))$ and $\mathrm{mld}(0;A,(\mathrm{in}_{w_5}f))$ and we have
 $$\mathrm{mld}(0;A,(f))=\mathrm{mld}(0;A,(\mathrm{in}_{w_5}f))=1.$$

\vskip.3truecm
\noindent
{\bf Step 6.} Assume  $\mathrm{in}_{w_1}f=x^2$ and $\mathrm{in}_{w_i}f=x^2+y^3$ $(i=2,\dots,5)$.\\
 Let $w_6=(3,2,1)$.
Note that $\mathrm{ord}_{w_6}f=6$ since $\mathrm{in}_{w_1}f=x^2$ and $\mathrm{in}_{w_i}f=x^2+y^3$ $(i=2,\dots,5)$.
Therefore  for some $a_1,\dots,a_5\in k$,
$$\mathrm{in}_{w_6}f=x^2+y^3+a_1xyz+a_2xz^3+a_3z^6+a_4yz^4+a_5y^2z^2.$$

\noindent
{\bf Step 6-1.} Assume  $\mathrm{ch}k=2$.

\noindent
{\bf Step 6-1-1.} Assume $a_1\neq 0$. \\
Then the pair $(A, (\mathrm{in}_{w_6}f))$ is $F$-pure at the origin by Proposition \ref{Fedder}.
Therefore $(A, (\mathrm{in}_{w_6}f))$ is log canonical at the origin by Theorem \ref{HW thm}.
Hence by Proposition \ref{mld initial}, $E_{w_6}$ computes $\mathrm{mld}(0;A,(f))$ and $\mathrm{mld}(0;A,(\mathrm{in}_{w_6}f))$ and we have
 $$\mathrm{mld}(0;A,(f))=\mathrm{mld}(0;A,(\mathrm{in}_{w_6}f))=0.$$

\noindent
{\bf Step 6-1-2.} Assume $a_1=0$ and $a_2\neq 0$. \\
By a coordinates transformation $y\mapsto y+bz^2$ for some $b\in k$,  we may assume that 
$a_4=0$.
By a coordinates transformation $x\mapsto x+cz^3$ for some $c\in k$,  we may assume that 
$a_3=0$.
Therefore we may assume that for some $d\in k$,
$$\mathrm{in}_{w_6}f=x^2+y^3+a_2xz^3+dy^2z^2.$$
Since
 $\mathrm{in}_{w_6}f$ is the defining equation of  a simple elliptic singularity (See  \cite[Corollary 4.3]{H}),
 we have $\mathrm{mld}(0;A,(\mathrm{in}_{w_6}f))\ge 0$ by Proposition \ref{slc mld}.
Thus by Proposition \ref{mld initial},  $E_{w_6}$ computes $\mathrm{mld}(0;A,(f))$ and $\mathrm{mld}(0;A,(\mathrm{in}_{w_6}f))$ and we have
 $$\mathrm{mld}(0;A,(f))=\mathrm{mld}(0;A,(\mathrm{in}_{w_6}f))=0.$$

\noindent
{\bf Step 6-1-3.} Assume $a_1=a_2=0$. \\
By a coordinates transformation $y\mapsto y+bz^2$ for some $b\in k$,  we may assume that 
$a_4=0$.
By a coordinates transformation $x\mapsto x+c_1z^3+c_2yz$ for some $c_1,c_2\in k$,  we may assume that 
$a_3=a_5=0$.
Therefore 
  we may assume that $\mathrm{in}_{w_6}f=x^2+y^3$.
Note that these coordinate transformations do not change $\mathrm{in}_{w_1}f=x^2$ and $\mathrm{in}_{w_i}f=x^2+y^3$ $(i=2,\dots,5)$.

\vskip.2truecm
\noindent
{\bf Step 6-2.} Assume $\mathrm{ch}k\neq2$. \\
By a coordinates transformation $x\mapsto x+b_1yz+b_2z^3$ for some $b_1,b_2\in k$, 
 we may assume that $a_1=a_2=0$.
Moreover, by a coordinates transformation $y\mapsto y+cz^2$ for some $c\in k$ with $c^3+a_5c^2+a_4c+a_3=0$,  we may assume that 
$$\mathrm{in}_{w_6}f=x^2+y^3+dyz^4+ey^2z^2$$ for some $d,e\in k$.
Therefore 
$$\mathrm{in}_{w_6}f=x^2+y(y-\alpha z^2)(y-\beta z^2)$$ for some $\alpha,\beta\in k$.
Note that these coordinate transformation do not change $\mathrm{in}_{w_1}f=x^2$ and $\mathrm{in}_{w_i}f=x^2+y^3$ $(i=2,\dots,5)$.

If $\mathrm{in}_{w_6}f\neq x^2+y^3$,
we may assume that  $\alpha\neq 0$. 
Then by a coordinates transformation
 $z\mapsto \gamma z$ for some $\gamma\in k\setminus\{0\}$,
we may assume that 
$$\mathrm{in}_{w_6}f=x^2+y(y-z^2)(y-\delta z^2)$$ for some $\delta\in k$.

\noindent
{\bf Step 6-2-1.}
Assume $\delta\neq 0,1$.\\  
Then $\mathrm{in}_{w_6}f$ is the defining equation of  a simple elliptic singularity (See  \cite[Corollary 4.3]{H} and \cite[Theorem 7.6.4]{I book}). 
By Proposition \ref{slc mld}, $\mathrm{mld}(0;A,(\mathrm{in}_{w_6}f))\ge 0$.
Thus by Proposition \ref{mld initial},  $E_{w_6}$ computes $\mathrm{mld}(0;A,(f))$ and $\mathrm{mld}(0;A,(\mathrm{in}_{w_6}f))$ and we have
 $$\mathrm{mld}(0;A,(f))=\mathrm{mld}(0;A,(\mathrm{in}_{w_6}f))=0.$$

\noindent
{\bf Step 6-2-2.}
Assume $\mathrm{ch}k= 0$ and $\delta=0$ or $\delta=1$.\\
 Then 
 $\mathrm{Spec}k[[x,y,z]]/\mathrm{in}_{w_6}f$ is semi-log canonical at the origin by \cite[Main Theorem]{LR}.
By Proposition \ref{slc mld}, $\mathrm{mld}(0;A,(\mathrm{in}_{w_6}f))\ge 0$.
Therefore by Proposition \ref{mld initial}, $E_{w_6}$ computes $\mathrm{mld}(0;A,(f))$ and $\mathrm{mld}(0;A,(\mathrm{in}_{w_6}f))$ and we have
 $$\mathrm{mld}(0;A,(f))=\mathrm{mld}(0;A,(\mathrm{in}_{w_6}f))=0.$$

\noindent
{\bf Step 6-2-3.}
Assume $\mathrm{ch}k\neq 0$ and $\delta=0$ or $\delta=1$.\\
The pair $(A, (\mathrm{in}_{w_6}f))$ is $F$-pure at the origin by  Proposition \ref{Fedder}.
The pair $(A, (\mathrm{in}_{w_6}f))$ is log canonical at the origin by Theorem \ref{HW thm}.
Therefore by Proposition \ref{mld initial},  $E_{w_6}$ computes $\mathrm{mld}(0;A,(f))$ and $\mathrm{mld}(0;A,(\mathrm{in}_{w_6}f))$ and we have
 $$\mathrm{mld}(0;A,(f))=\mathrm{mld}(0;A,(\mathrm{in}_{w_6}f))=0.$$

\vskip.3truecm
\noindent
{\bf Step 7.} Assume $\mathrm{in}_{w_1}f=x^2$ and $\mathrm{in}_{w_i}f=x^2+y^3$ $(i=2,\dots,6)$.\\
 Let $w_7=(21,14,6)$.
Note that $\mathrm{ord}_{w_7}f=42$ since $\mathrm{in}_{w_1}f=x^2$ and $\mathrm{in}_{w_i}f=x^2+y^3$ $(i=2,\dots,6)$.
Then for  the prime divisor $E_{w_7}$,
$a(E_{w_7};A,(f))=a(E_{w_7};A,(\mathrm{in}_{w_7}f))=-1$.
Therefore $E_{w_7}$ computes $\mathrm{mld}(0; A, (f))$ and $\mathrm{mld}(0; A, (\mathrm{in}_{w_7}f))$ and we have
  $$\mathrm{mld}(0; A, (f))=\mathrm{mld}(0; A, (\mathrm{in}_{w_7}f))=-\infty.$$

\end{proof}

\begin{thm}\label{key thm}
Let $A=\mathrm{Spec}k[[x,y,z]]$,  $0$ be the origin of $A$ and $f\in k[[x,y,z]]\setminus\{0\}$.
Then there exist a $k$-automorphism  $\phi$ of $k[[x,y,z]]$ and $w=(w_1,w_2,w_3)\in \mathbb N^3$ such that 
$$\mathrm{mld}(0;A,(f))=\mathrm{mld}\big(0;A,(\mathrm{in}_w\phi(f))\big)$$
and $w$ is  one of the following:
\begin{align*}
\mathrm{(i)}&\ (1,1,1),& \mathrm{(ii)}&\ (3,2,2),&\mathrm{(iii)}&\ (2,1,1),& \mathrm{(iv)}&\ (6,4,3),& \mathrm{(v)}&\ (9,6,4),\\
\mathrm{(vi)}&\ (15,10,6),& \mathrm{(vii)}&\ (3,2,1),&\mathrm{(viii)}&\ (10,5,4),& \mathrm{(ix)}&\ (15,8,6),& \mathrm{(x)}&\ (21,14,6).
\end{align*}
Moreover if $\mathrm{ord}_{(1,1,1)}f\neq 3$ or $\mathrm{mld}(0;A,(\mathrm{in}_{(1,1,1)}f))\ge 0$, then
$E_w$ computes  $\mathrm{mld}\big(0;A,(\phi(f))\big)$ and $\mathrm{mld}\big(0;A,(\mathrm{in}_w\phi(f))\big)$.
In  case $\mathrm{(ii),(iii)},\dots,\mathrm{(vii)}$, $\mathrm{mld}(0;A,(f))\ge 0.$
In  case $\mathrm{(viii)}$, $\mathrm{(ix)}$,  $\mathrm{(x)}$, $\mathrm{mld}(0;A,(f))=-\infty.$
\end{thm}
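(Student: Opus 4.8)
The plan is to prove Theorem~\ref{key thm} by a case analysis on $d:=\mathrm{ord}_{(1,1,1)}f$, which is a finite nonnegative integer since $f\neq 0$, reducing in each case to a result already at hand. The ranges $d\ge 3$ and $d=2$ are, respectively, exactly Lemma~\ref{slc order 3} (with $\phi=\mathrm{id}$ and $w=(1,1,1)$) and Proposition~\ref{key prop} (which supplies $\phi$, a weight $w$ among the ten listed, the ``$E_w$ computes'' statement, and the sign information). So the genuinely new work is confined to the low-order cases $d\in\{0,1\}$, and in both of those I would take $w=(1,1,1)$.

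First I would dispose of $d=0$: here $f$ is a unit, so $(f)=(\mathrm{in}_{(1,1,1)}f)=\mathcal O_A$ and $\mathrm{mld}(0;A,(f))=\mathrm{mld}(0;A)=3$ because $k[[x,y,z]]$ is regular of dimension $3$; the required equality is trivial with $\phi=\mathrm{id}$, and since $a(E_{(1,1,1)};A,(f))=k_{E_{(1,1,1)}}+1=3=\mathrm{mld}(0;A,(f))\ge 0$ the divisor $E_{(1,1,1)}$ computes it. Next, for $d=1$ the degree-one part of $f$ is a nonzero linear form, so after a suitable linear $k$-automorphism $\phi$ I may assume the degree-one part of $\phi(f)$ is $x$, whence $\mathrm{in}_{(1,1,1)}\phi(f)=x$. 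Then $\mathrm{Spec}\,k[[x,y,z]]/(\phi(f))$ is regular at the origin, so $k[[x,y,z]]/(\phi(f))\cong k[[y,z]]$, and by Theorem~\ref{inversion of adjunction} both $\mathrm{mld}(0;A,(\phi(f)))$ and $\mathrm{mld}(0;A,(x))$ equal $\mathrm{mld}(0;\mathrm{Spec}\,k[[y,z]])=2$; since $a(E_{(1,1,1)};A,(\phi(f)))=a(E_{(1,1,1)};A,(x))=k_{E_{(1,1,1)}}+1-1=2$, the divisor $E_{(1,1,1)}$ computes both minimal log discrepancies, and the displayed equality follows from the invariance of the minimal log discrepancy under $\phi$.

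With the four ranges of $d$ in hand I would assemble the statement. The displayed equality $\mathrm{mld}(0;A,(f))=\mathrm{mld}(0;A,(\mathrm{in}_w\phi(f)))$ has been produced in each case. For the ``moreover'' clause: if $d\neq 3$ then $E_w$ computes the two quantities, by the discussion above when $d\le 2$ and by the second part of Lemma~\ref{slc order 3} when $d\ge 4$; if $d=3$ and $\mathrm{mld}(0;A,(\mathrm{in}_{(1,1,1)}f))\ge 0$, the same follows again from the second part of Lemma~\ref{slc order 3}. Finally, any of the ten weights with $w\neq(1,1,1)$ can be produced only through Proposition~\ref{key prop} in the case $d=2$, where precisely the asserted signs were proved --- $\mathrm{mld}(0;A,(f))\ge 0$ for $w$ of type $\mathrm{(ii)}$--$\mathrm{(vii)}$ and $\mathrm{mld}(0;A,(f))=-\infty$ for $w$ of type $\mathrm{(viii)}$--$\mathrm{(x)}$ --- while the weight $(1,1,1)$ is (correctly) absent from the sign list because it also arises in the ranges $d\le 1$ and $d\ge 3$, where $\mathrm{mld}(0;A,(f))$ can equal $3$, $2$, or $-\infty$.

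I do not expect a serious obstacle here: the geometric content --- simple elliptic singularities, rational double points in arbitrary characteristic, Fedder's criterion, and the Liu--Rollenske classification in characteristic $0$ --- lies entirely inside Proposition~\ref{key prop} and Lemma~\ref{slc order 3}, and what remains is organizational. The one point I would treat with a little care, and the reason for the hypothesis ``$\mathrm{ord}_{(1,1,1)}f\neq 3$ or $\mathrm{mld}(0;A,(\mathrm{in}_{(1,1,1)}f))\ge 0$'', is that for $d=3$ the divisor $E_{(1,1,1)}$ need not compute the minimal log discrepancy unless the initial form itself has nonnegative mld --- exactly the subtlety already isolated in Lemma~\ref{slc order 3}.
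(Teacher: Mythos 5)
Your proposal is correct and follows essentially the same route as the paper: the paper's proof is exactly the case split on $\mathrm{ord}_{(1,1,1)}f$, citing Lemma~\ref{slc order 3} for order $\ge 3$ and Proposition~\ref{key prop} for order $2$, and asserting the values $3$ and $2$ in the orders $0$ and $1$ cases. Your extra detail for those two trivial cases (unit case, and the regular-hypersurface computation via Theorem~\ref{inversion of adjunction}) and your bookkeeping for the ``moreover'' and sign clauses just fill in what the paper leaves implicit.
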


\begin{proof}
If $\mathrm{ord}_{(1,1,1)}f=0$, then $$\mathrm{mld}(0; A, \mathcal O_A)=\mathrm{mld}(0; A, (f))=\mathrm{mld}(0; A, (\mathrm{in}_{(1,1,1)}f))=3.$$
If $\mathrm{ord}_{(1,1,1)}f=1$, then $$\mathrm{mld}(0; A, (f))=\mathrm{mld}(0; A, (\mathrm{in}_{(1,1,1)}f))=2.$$
If  $\mathrm{ord}_{(1,1,1)}f\ge 2$, then this theorem holds by Lemma \ref{slc order 3} and Proposition \ref{key prop}.
\end{proof}

\begin{thm}
Let  $X=\mathrm{Spec}k[[x,y,z]]/(f)$ and $f\in (x,y,z)$. 
Then the following are equivalent:
\begin{enumerate}
\item $X$ is semi-log canonical at the origin,
\item There exist a $k$-automorphism  $\phi$ of  $k[[x,y,z]]$ and $w=(w_1,w_2,w_3)\in \mathbb N^3$ such that 
$\mathrm{Spec}k[[x,y,z]]/(\mathrm{in}_w\phi(f))$ is semi-log canonical at the origin and $w$ is  one of the following:
\begin{align*}
\mathrm{(i)}&\ (1,1,1),& \mathrm{(ii)}&\ (3,2,2),& \mathrm{(iii)}&\ (2,1,1),& \mathrm{(iv)}&\ (6,4,3),\\
 \mathrm{(v)}&\ (9,6,4),&\mathrm{(vi)}&\ (15,10,6),& \mathrm{(vii)}&\ (3,2,1).
\end{align*}
\end{enumerate}
\end{thm}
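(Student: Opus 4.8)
The plan is to deduce this from the numerical criterion of Proposition~\ref{slc mld} together with Theorem~\ref{key thm}. Recall that Proposition~\ref{slc mld} says a hypersurface $\mathrm{Spec}k[[x,y,z]]/(g)$ with $g\in(x,y,z)\setminus\{0\}$ is semi-log canonical at the origin precisely when $\mathrm{mld}(0;A,(g))\ge 0$, where $A=\mathrm{Spec}k[[x,y,z]]$. The case $f=0$ is trivial (take $\phi=\mathrm{id}$ and $w=(1,1,1)$), so I will assume $f\neq 0$; note then that for any $k$-automorphism $\phi$ the element $\phi(f)$ is again a nonzero element of $(x,y,z)$, since $\phi$ fixes the maximal ideal, and the same holds for its initial term $\mathrm{in}_w\phi(f)$, so Proposition~\ref{slc mld} applies to all the hypersurfaces that occur below.

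For the implication $(1)\Rightarrow(2)$ I would start from $\mathrm{mld}(0;A,(f))\ge 0$, which holds by Proposition~\ref{slc mld}, and apply Theorem~\ref{key thm} to produce a $k$-automorphism $\phi$ and a weight $w$ from the list $\mathrm{(i)}$--$\mathrm{(x)}$ with $\mathrm{mld}(0;A,(f))=\mathrm{mld}(0;A,(\mathrm{in}_w\phi(f)))$. The key observation is that the weights $\mathrm{(viii)},\mathrm{(ix)},\mathrm{(x)}$ are excluded, because in those cases Theorem~\ref{key thm} records $\mathrm{mld}(0;A,(f))=-\infty$, contradicting $\mathrm{mld}(0;A,(f))\ge 0$. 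Hence $w$ lies in the short list $\mathrm{(i)}$--$\mathrm{(vii)}$, and since $\mathrm{mld}(0;A,(\mathrm{in}_w\phi(f)))=\mathrm{mld}(0;A,(f))\ge 0$, one more application of Proposition~\ref{slc mld} shows that $\mathrm{Spec}k[[x,y,z]]/(\mathrm{in}_w\phi(f))$ is semi-log canonical at the origin, which is exactly $(2)$.

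For the implication $(2)\Rightarrow(1)$ the specific list plays no role. Given $\phi$ and $w$ as in $(2)$, Proposition~\ref{slc mld} gives $\mathrm{mld}(0;A,(\mathrm{in}_w\phi(f)))\ge 0$; then Proposition~\ref{mld initial} applied to the ideal $(\phi(f))$ yields $\mathrm{mld}(0;A,(\phi(f)))\ge\mathrm{mld}(0;A,(\mathrm{in}_w\phi(f)))\ge 0$; and since $\phi$ induces an isomorphism of the pair $(A,(f))$ with $(A,(\phi(f)))$ fixing the origin, $\mathrm{mld}(0;A,(f))=\mathrm{mld}(0;A,(\phi(f)))\ge 0$. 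A final appeal to Proposition~\ref{slc mld} gives that $X$ is semi-log canonical at the origin. All the genuine difficulty has been absorbed into Theorem~\ref{key thm}, and through it into Proposition~\ref{key prop} and the lemmas feeding it; the only point needing attention in the present argument is the bookkeeping that discards the last three weights once the minimal log discrepancy is known to be nonnegative, and that is immediate from the sign information built into the statement of Theorem~\ref{key thm}.
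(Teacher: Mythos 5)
Your proposal is correct and follows essentially the same route as the paper: both directions reduce to the numerical criterion of Proposition~\ref{slc mld}, with $(1)\Rightarrow(2)$ obtained from Theorem~\ref{key thm} (the weights $\mathrm{(viii)}$--$\mathrm{(x)}$ being excluded by the sign information there) and $(2)\Rightarrow(1)$ from Proposition~\ref{mld initial}. Your explicit remarks on the $f=0$ case, the invariance under $\phi$, and the exclusion of the last three weights merely spell out steps the paper leaves implicit.
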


\begin{proof}
By Proposition \ref{slc mld} and Proposition \ref{mld initial}, $(2)$ implies $(1)$.

Next we prove the converse $(1)\Rightarrow (2)$.
Assume that $X$ is semi-log canonical at the origin.
Let $A=\mathrm{Spec} k[[x,y,z]]$ and $0$ be the origin of $A$.
By Proposition \ref{slc mld}, it is enough to prove that there exist a $k$-automorphism  $\phi$ of $k[[x,y,z]]$ and $w=(w_1,w_2,w_3)\in \mathbb N^3$ such that   $\mathrm{mld}(0; A, (f))=\mathrm{mld}(0; A, (\mathrm{in}_w\phi(f)))$ and $w$ is one of $\mathrm{(i)},\dots,\mathrm{(vii)}$.
Therefore this theorem holds by Theorem \ref{key thm}.

\end{proof}

The following example shows that the statement in Theorem \ref{key thm} does not hold for an ideal with the exponent $\neq 1$.

\begin{ex}
Let $A=\mathrm{Spec} \mathbb C[[x,y,z]]$, $0$ be the origin of $A$, $t=7/10$ and $f=(x^2+y^2+z^2)^2+az^5+by^5+cz^5\in\mathbb C[[x,y,z]]$, where $a,b$ and $c$ are general complex numbers.
Then  we have
$$\mathrm{mld}(0;A,(f)^t)=0\ \ \  \mbox{and}\ \ \ \ a(E_w;A,(\phi(f))^t)>0$$
for any $\mathbb C$-automorphism  $\phi$ of  $\mathbb C[[x,y,z]]$ and any $w\in \mathbb N^3$
(See \cite[Example 6.45]{KSC}).
\end{ex}

%%%%%%%%%%%%%%%%%%%%%%%%%%%%%%%%%%%%%%%%%%%%%%%%%%%%%%%%%%%%%%%%%%%%%%%%%%%%%%%%%%%%%%%%%%%%
\section{Application}
In this section, we prove Conjecture $D_2$ in arbitrary characteristic.

\begin{defn} 
\label{zar}
Let $A$ be a smooth variety and $E$ be a prime divisor over $A$.
Then there is a sequence of blow-ups
$$A^{(n)}\stackrel{\varphi_n}\longrightarrow A^{(n-1)}\to\cdots\to
A^{(1)}\stackrel{\varphi_1}\longrightarrow A^{(0)}=A$$
such that 
\begin{enumerate}
    \item $E$ appears on $A^{(n)}$, i.e., the 
    center of $E$ on $A^{(n)}$ is of codimension 1 and 
    $A^{(n)}$ is normal at the generic point $p_n$ of  $E$,  
  
    \item $\varphi_i(p_i)=p_{i-1}$ for $1\leq i\leq n$, and
    \item $\varphi_i$ is the blow-up with the center $\overline{\{p_{i-1}\}}$.
\end{enumerate}
The  minimal number
 $i$ such that $\mathrm{codim} \overline{\{p_{i}\}}=1$ is denoted by $b(E)$.
\end{defn}

\begin{rem}
Zariski proved that every divisor can be reached by a sequence of blow-ups (for example, see \cite[Lemma 2.45]{KM}).
\end{rem}

\begin{defn}
Let $A=\mathrm{Spec}k[x_1,\dots,x_{n+c}]$, $I=(f_1,\dots,f_r)\subset(x_1\dots,x_{n+c})$ be an ideal of $k[x_1,\dots,x_{n+c}]$, $X=\mathrm{Spec}k[x_1,\dots,x_{n+c}]/I$ be an $n$-dimensional variety and $0$ be the origin of $X$.
We define $s_m(0;X)$ on $m\in \mathbb Z_{\ge 0}$ as follows:
$$s_m(0;X)$$
$$:=(m+1)n-\mathrm{dim}\ \mathrm{Spec}k\Big[\ x_i^{(j)}\ \Big|
\begin{array}{l}
1\le i\le n+c, \\
0\le j \le m
\end{array}
\Big]\Big/
\Big(\ x_1^{(0)},\dots,x_{n+c}^{(0)}, f_s^{(l)}\ \Big|
\begin{array}{l}
1\le s\le r, \\
0\le l\le m
\end{array}
\Big) .$$

We can define in the same way $s_m(0;Y)$ for $Y=\mathrm{Spec}k[[x_1,\dots,x_{n+c}]]/I$. 
\end{defn}

In \cite{I}, Ishii posed the following conjectures.

\begin{conj}[Conjecture $U_n$] %***(Ud)   
   For every $n\in \mathbb N$ there is an integer $B_n\in \mathbb N$ depending only on $n$
   such that for every singularity $(X,x)$ of dimension $n$ 
   embedded into a smooth variety $A$ with $\mathrm{dim} A=\mathrm{emb}(X,x)$,
   there is a prime divisor $E$ over $A$ computing $\mathrm{mld} (x; A, I_X^c)$ and satisfying 
   $b(E)\leq B_n$.
   Here, $\mathrm{emb}(X,x)$ is the embedding dimension of $X$ at $x$, $I_X$ is the defining ideal of $X$  and $c=\mathrm{emb}(X,x)-n$.
   
\end{conj}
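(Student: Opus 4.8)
The plan is to establish Conjecture $U_n$ in the case $n=2$ over an arbitrary algebraically closed field (the case $n=1$ is treated in \cite{I}, while general $n$ remains open); this is precisely the form accessible through the hypersurface analysis developed above. First I would pass to the completion: since the minimal log discrepancy and the sequence of blow-ups defining $b(E)$ are determined in a neighbourhood of $x$, I may replace $(X,x)\subset A$ by $\hat X=\mathrm{Spec}\,k[[x_1,\dots,x_{n+c}]]/I$ with $c=\mathrm{emb}(X,x)-2$ and $\hat A=\mathrm{Spec}\,k[[x_1,\dots,x_{n+c}]]$, and it then suffices to bound $b(E)$ for a single prime divisor $E$ over $\hat A$ computing $\mathrm{mld}(0;\hat A,I^c)$. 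I would then stratify the argument according to $c$.

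For the quick cases I would use the blow-up $E_0=E_{(1,\dots,1)}$ of the origin, for which $k_{E_0}=(n+c)-1$ and $\mathrm{ord}_{E_0}(I)\ge 2$ because the hypothesis $\mathrm{emb}(X,x)=\dim A$ forces $I\subseteq\mathfrak m^2$; hence
\[
a(E_0;\hat A,I^c)=(n+c)-c\cdot\mathrm{ord}_{E_0}(I)\le n-c=2-c.
\]
When $c=0$ the point is smooth and $E_0$ computes the mld with $b(E_0)=1$; when $c\ge 3$ one has $a(E_0)<0$, so $\mathrm{mld}=-\infty$ and $E_0$ again computes it with $b(E_0)=1$. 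When $c=2$ the estimate only yields $a(E_0)\le 0$: if $\mathrm{mld}=0$ then $E_0$ computes it, whereas if $\mathrm{mld}<0$ then $\mathrm{mld}=-\infty$ and I must exhibit a divisor of strictly negative discrepancy reached by a bounded number of further blow-ups.

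The substantive case is $c=1$, i.e. $\hat X=\mathrm{Spec}\,k[[x,y,z]]/(f)$. Here I would invoke Theorem \ref{key thm}: it produces a $k$-automorphism $\phi$ and a weight $w$ from the explicit list $\mathrm{(i)}$--$\mathrm{(x)}$ with $\mathrm{mld}(0;\hat A,(f))=\mathrm{mld}(0;\hat A,(\mathrm{in}_w\phi(f)))$, and, outside the single exceptional possibility $\mathrm{ord}_{(1,1,1)}f=3$ with $\mathrm{mld}(0;\hat A,(\mathrm{in}_{(1,1,1)}f))<0$, it guarantees that the toric divisor $E_w$ computes $\mathrm{mld}(0;\hat A,(\phi(f)))$. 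Since $\phi$ is an automorphism of $\hat A$ fixing the origin, its inverse carries $E_w$ to a divisor over $\hat A$ computing $\mathrm{mld}(0;\hat A,(f))$ and leaves the defining blow-up sequence, hence the value $b$, unchanged. It then remains to compute $b(E_w)$ for each of the finitely many weights $w$ in the list, by writing each toric valuation $E_w$ as a minimal chain of smooth (point and curve) blow-ups, and to set $B_2:=\max_w b(E_w)$.

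Finally, in the exceptional cubic case Proposition \ref{mld initial} gives $\mathrm{mld}(0;\hat A,(f))\le\mathrm{mld}(0;\hat A,(\mathrm{in}_{(1,1,1)}f))<0$, so $\mathrm{mld}=-\infty$ and it suffices to produce any divisor of negative discrepancy with small $b$: blowing up the origin and then a point lying over the most singular point of the plane cubic $\{\mathrm{in}_{(1,1,1)}f=0\}$ does this with $b\le 2$. The hardest part, I expect, will be the $c=2$ subcase in which $\mathrm{mld}$ is negative yet $a(E_0)=0$: there the crude discrepancy estimate is insufficient, and one needs either a bounded initial-form analysis for codimension-two ideals or a controlled reduction to the hypersurface picture to manufacture a strictly negative divisor of bounded $b$. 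Together with the explicit computation of the chain lengths $b(E_w)$ for the ten toric weights, this is where the finite numerical value of $B_2$ is ultimately pinned down.
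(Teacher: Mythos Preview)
Two steps in your outline do not go through. First, you invoke Proposition~\ref{mld initial} for the inequality $\mathrm{mld}(0;\hat A,(f))\le\mathrm{mld}(0;\hat A,(\mathrm{in}_{(1,1,1)}f))$, but that proposition gives the reverse inequality $\mathrm{mld}(0;A,\mathfrak a)\ge\mathrm{mld}(0;A,\mathrm{in}_w\mathfrak a)$; the statement you actually need when $\mathrm{ord}_{(1,1,1)}f=3$ is the \emph{equality} of Lemma~\ref{slc order 3}. Second, the claim that two blow-ups always yield a divisor of negative discrepancy in this cubic case is false. Take $f=x^3+y^4+z^4$ in characteristic~$0$: then $\mathrm{in}_{(1,1,1)}f=x^3$ and $\mathrm{mld}(0;A,(f))=-\infty$, but after blowing up the origin the strict transform is $v^3+u(1+w^4)$ in the $y$-chart (and symmetrically in the $z$-chart), which has multiplicity at most $2$ at every point of the exceptional $\mathbb P^2$; hence every $E$ with $b(E)=2$ satisfies $a(E;A,(f))\ge 0$. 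The higher-order terms of $f$ genuinely interfere with your ``most singular point of the cubic'' heuristic.

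The paper avoids both issues, and your unresolved $c=2$ subcase, by a different organisation. Only the hypersurface double-point case ($c=1$, $\mathrm{ord}_{(1,1,1)}f=2$) is treated directly, via Proposition~\ref{key prop}; every other case---in particular $\mathrm{ord}_{(1,1,1)}f\ge 3$ and the codimension-two situation you flag as hardest---was already settled characteristic-free in \cite{I} (Cases~1--5 in the proof of Theorem~6.3 there) and is simply cited. Furthermore the paper never computes $b(E_w)$ weight by weight: it reads off $k_{E_w}\le 40$ from the explicit list and applies the general bound $b(E)\le k_E-\dim A+1$ of Proposition~\ref{prop 3.9 in I}. Your self-contained strategy for the peripheral cases is appealing in principle, but as written the cubic step is incorrect and the $c=2$ negative subcase remains open.
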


\begin{conj}[Conjecture $C_n$] 
\label{Cn}%***(Cd) 
For $n\in \mathbb N$,
there exists $N_n\in \mathbb N$ depending only on $n$ such that 
for  any $n$-dimensional variety $X=\mathrm{Spec}k[x_1,\dots,x_{n+c}]/I$ with $I\subset(x_1,\dots,x_{n+c})$, there exists
$m\leq N_n$ satisfying either 
$$\left\{\begin{array}{l}
s_m(0;X)=\mathrm{mld}(0;\mathrm{Spec}k[x_1,\dots,x_{n+c}],I^c )\geq 0, \mbox{or}\\
\\
s_m(0;X)<0, \ \ \mbox{when}\  \mathrm{mld}(0;\mathrm{Spec}k[x_1,\dots,x_{n+c}],I^c )=-\infty, \\
\end{array} \right. $$
where $0$ is the origin of $\mathrm{Spec}k[x_1,\dots,x_{n+c}].$

\end{conj}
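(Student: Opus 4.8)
The plan is to establish the case $n=2$ of Conjecture $C_n$ in arbitrary characteristic, together with an explicit constant $N_2$; by the relations between these conjectures proved in \cite{I} this also yields Conjecture $U_2$ and the target statement that Conjecture $D_2$ holds with $M_2\le 58$. The starting point is purely formal: for $X=\mathrm{Spec}k[[x_1,\dots,x_{n+c}]]/I$ with $I=(f_1,\dots,f_r)$ one has
$$s_m(0;X)=\mathrm{ht}\big(x_1^{(0)},\dots,x_{n+c}^{(0)},\,f_s^{(l)}\mid 1\le s\le r,\ 0\le l\le m\big)-(m+1)c,$$
so by the jet-theoretic computation of minimal log discrepancies (Lemma \ref{mld contact}, applied to $I$ and to the pair $(A,I^c)$) one gets $\mathrm{mld}\big(0;A,I^c\big)=\inf_{m\ge 0}s_m(0;X)$; Conjecture $C_2$ thus says exactly that this infimum is attained, or is already seen to be negative, at a uniformly bounded jet level $m$. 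Since the normalized quantity $\mathrm{mld}(0;A,I^c)$ and the numbers $s_m$ are unchanged under a minimal re-embedding, one may assume $n+c=\mathrm{emb}(X,0)$, hence $c\in\{0,1,2\}$: the case $c=0$ is the smooth point, with $s_0=2=\mathrm{mld}$; the case $c=2$ (surfaces with $\mathrm{emb}=4$) is handled by the arguments of \cite{I}, which for that codimension are valid in every characteristic, the only characteristic-sensitive input there being a statement on surface hypersurface singularities that is now supplied by Theorem \ref{key thm}.

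The new content is therefore the hypersurface case $X=\mathrm{Spec}k[[x,y,z]]/(f)$, $A=\mathrm{Spec}k[[x,y,z]]$. By Theorem \ref{key thm} there are a $k$-automorphism $\phi$ and a weight $w$ among the ten listed vectors with $\mathrm{mld}(0;A,(f))=\mathrm{mld}\big(0;A,(\mathrm{in}_w\phi(f))\big)$, and, outside the single sub-case in which $\mathrm{ord}_{(1,1,1)}f=3$ and $\mathrm{mld}\big(0;A,(\mathrm{in}_{(1,1,1)}f)\big)<0$, the toric divisor $E_w$ computes $\mathrm{mld}(0;A,(\phi(f)))$. For each of the ten weights one has $k_{E_w}=w\cdot(1,1,1)-1\le 40$, and $E_w$ is obtained from $A$ by a chain of ordinary blow-ups of points and smooth curves whose length is an absolute constant, governed by the Euclidean algorithm on the entries of $w$; so $b(E_w)$ is uniformly bounded. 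By the relation between Conjectures $U_n$ and $C_n$ recorded in \cite{I}, a divisor of bounded $b(E)$ computing $\mathrm{mld}(0;A,(\phi(f)))$ forces the infimum over $m$ to be realized at a bounded jet level, as required.

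It remains to settle the excluded sub-case, where $\mathrm{ord}_{(1,1,1)}f=3$ and the tangent cone $\mathrm{in}_{(1,1,1)}f$ defines a plane-cubic cone that is not log canonical; here $\mathrm{mld}(0;A,(f))=\mathrm{mld}(0;A,(\mathrm{in}_{(1,1,1)}f))=-\infty$, and such a cubic is, up to a coordinate change, a binary cubic in two of the variables, that is, three concurrent lines, a double line plus a line, or a triple line. As in \cite{I}, for each of these finitely many tangent-cone shapes one produces a prime divisor $E$ over $A$ with $a(E;A,(\phi(f)))<0$ by a bounded number of blow-ups of the origin and of points of the exceptional locus, keeping $k_E$ bounded since each point blow-up of a smooth $3$-fold raises $k_E$ by $2$. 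Combining the constants from the three cases $c=0,1,2$ yields $N_2$; since all the divisors used have $k_E$ bounded, the same information gives Conjecture $U_2$ and, through the explicit bound-tracking in \cite{I}, Conjecture $D_2$ with $M_2\le 58$.

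The difficulty lies in the uniformity bookkeeping rather than in any new idea. One must (i) check that the minimal-re-embedding reduction and the codimension-two argument of \cite{I} remain valid in characteristics $2$, $3$, $5$ with Theorem \ref{key thm} as the only added ingredient; (ii) exhibit, for the larger weights such as $(21,14,6)$ and $(15,10,6)$, an explicit factorization of the corresponding weighted blow-up into ordinary blow-ups so as to bound $b(E_w)$; and (iii) carry out the excluded cubic-cone sub-case uniformly over all characteristics. I expect (iii) to be the most delicate: it is exactly the configuration Theorem \ref{key thm} leaves without a computing toric divisor, and locating a negative-discrepancy divisor over $A$ with controlled $b(E)$ means following the degenerate tangent cone through several blow-ups, as in \cite{I}, where the interference of the higher-order terms must be controlled.
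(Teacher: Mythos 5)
Your overall strategy (reduce to the minimally embedded cases $c\le 2$, cite \cite{I} for codimension two, use the initial-term results for hypersurfaces) is close in spirit to the paper's, but there is a genuine gap exactly where your route diverges from it. The paper does not run Theorem \ref{key thm} over all hypersurface points: it only invokes Proposition \ref{key prop}, i.e.\ the case $\mathrm{ord}_{(1,1,1)}f=2$, because the characteristic restriction in \cite{I} occurs only for hypersurface double points; every point that is not a hypersurface double point (in particular every hypersurface point of multiplicity $\ge 3$) is already covered by Cases 1--5 of the proof of Theorem 6.3 in \cite{I} in arbitrary characteristic, and for double points the exceptional sub-case of Theorem \ref{key thm} never arises, so a computing toric divisor $E_w$ with $k_{E_w}\le 40$ always exists. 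You instead push the whole hypersurface case through Theorem \ref{key thm} and are then left with the sub-case $\mathrm{ord}_{(1,1,1)}f=3$ and $\mathrm{mld}(0;A,(\mathrm{in}_{(1,1,1)}f))<0$, where that theorem supplies no computing divisor. Your treatment of this sub-case is only a sketch, and its starting point is false: a non-log-canonical cubic cone need not be a binary cubic up to coordinate change. For instance $zy^2-x^3$ (cone over a cuspidal cubic) and $z(x^2+yz)$ (conic plus tangent line) give pairs with $\mathrm{mld}(0;A,\cdot)=-\infty$ but are not equivalent to forms in two variables, so your three normal forms do not exhaust the case, and the promised bounded-length blow-up construction with controlled $k_E$ (and controlled interference from higher-order terms) is never produced -- you yourself flag it as the delicate point. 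As written, Conjecture $C_2$/$D_2$ remains unproved precisely on a family of triple points.

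Two smaller issues. First, the explicit constants are asserted rather than derived: the paper gets $B_2\le 39$ from Proposition \ref{prop 3.9 in I} (the bound $b(E)\le k_E-\dim A+1$) and $N_2\le 41$ from Lemma \ref{nu=mu} applied to the computing divisor $E_w$, whose order along $(f)$ is at most $k_{E_w}+2\le 42$; your plan to bound $b(E_w)$ by factoring each weighted blow-up into ordinary blow-ups and then to pass from $U_2$ to $C_2$ via the equivalence in \cite{I} would need both the factorization and an effective form of that equivalence, neither of which you carry out. Second, the identity $\mathrm{mld}(0;A,I^c)=\inf_m s_m(0;X)$ does not follow from Lemma \ref{mld contact} as stated (there is no exponent $c$ there); you need the exponent version from \cite{EM}/\cite{I}, or simply Lemma \ref{nu=mu} as the paper uses. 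The cleanest repair of your argument is the paper's own reduction: confine the new analysis to double points, where the excluded sub-case disappears, and quote \cite{I} for everything else.
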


\begin{conj}[Conjecture $D_n$] 
\label{Dn}
 For  $n\in \mathbb N$, there exists $M_n\in \mathbb N$ depending only on $n$
such that 
for any $n$-dimensional variety $X$ and a closed point $x\in X$ with a closed immersion $X\subset A$ around $x$ into a smooth variety $A$ of dimension $\mathrm{emb}(X,x)\le 2n$,
 there exists a prime divisor $E$ over $A$ with the
center at $x$ such that  $k_E \leq M_n$ and
$E$ computes $\mathrm{mld} (x; A, I_X^c)$.
 Here, $\mathrm{emb}(X,x)$ is the embedding dimension of $X$ at $x$, $I_X$ is the defining ideal of $X$ and $c=\mathrm{emb}(X,x)-n$.

\end{conj}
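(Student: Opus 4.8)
The plan is to attempt to lift the initial-term strategy that succeeds for $n=2$ to arbitrary $n$. Fix an $n$-dimensional variety $X$, a closed point $x$, and an embedding $X\subset A$ into a smooth $A$ of dimension $m=\mathrm{emb}(X,x)\le 2n$, so that near $x$ we may work in $\mathrm{Spec}\,k[[x_1,\dots,x_m]]$ with $I_X=(f_1,\dots,f_r)$ and $c=m-n$. The first step is to establish the higher-dimensional analogue of Proposition \ref{mld initial}, incorporating the exponent $c$: for every weight $w\in\mathbb N^m$ one wants $\mathrm{mld}(0;A,I_X^c)\ge\mathrm{mld}\big(0;A,(\mathrm{in}_w I_X)^c\big)$. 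Since Proposition \ref{mld initial} is already proved for an arbitrary number of variables and an arbitrary ideal, and since its argument is entirely codimension-theoretic through Lemma \ref{mld contact}, this monotonicity should carry over once the bookkeeping for the exponent $c$ is added.

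The second step, which is the heart of the matter, would be to produce for each $n$ a finite list $W_n\subset\mathbb N^m$ of weight vectors, depending only on $n$ (and on the finitely many admissible values $m\le 2n$), with the property that for every such $(X,x)$ there exist a $k$-automorphism $\phi$ of $k[[x_1,\dots,x_m]]$ and a $w\in W_n$ with $\mathrm{mld}(0;A,I_X^c)=\mathrm{mld}\big(0;A,(\mathrm{in}_w\phi(I_X))^c\big)$, this common value being realized by the toric divisor $E_w$. Granting such a list, Conjecture \ref{Dn} follows at once: $E_w$ computes the mld, and since $k_{E_w}=w\cdot(1,\dots,1)-1$, the bound $M_n:=\max_{w\in W_n}\big(w\cdot(1,\dots,1)-1\big)$ depends only on $n$. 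This is precisely the mechanism by which Theorem \ref{key thm} delivers $D_2$ once the finite list of weights of Proposition \ref{key prop} is in hand.

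The hard part is constructing $W_n$, and this is exactly where the surface argument does not generalize. For $n=2$ the case analysis of Proposition \ref{key prop} terminates because, after normalizing the low-weight part of $f$ by coordinate changes, each branch either lands on a weight for which $E_w$ visibly computes a negative log discrepancy, or lands on a defining equation handled by the rational double point and simple elliptic normal forms, by the Liu--Rollenske classification \cite{LR}, and by Fedder's criterion \cite{F} through Theorem \ref{HW thm}. All three inputs are special to dimension two: the classification of slc surface hypersurfaces, the ADE and simple-elliptic normal forms, and the clean $F$-purity criterion for a single equation. For $n\ge 3$ there is no finite classification of the relevant singularities, the number of variables and generators both grow, and the branching of the normalization has no a priori termination, so one cannot presently guarantee that finitely many weights suffice. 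This is the same wall that leaves Conjecture \ref{Dn}, and the Musta\c{t}\v{a}--Nakamura conjecture \cite{MN} of which it is a special case, open for $n\ge 3$.

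A natural indirect route I would also pursue is to attack the equivalent jet-theoretic statement. By Lemma \ref{mld contact} the mld is an infimum over jet level $m$ of a codimension minus $m$, and Conjecture \ref{Cn} asserts a uniform bound $N_n$ on the level at which this infimum is attained; through the toric description of $\mathrm{ord}_{E_w}$ and $k_{E_w}$, a bounded jet level should translate into a bound on $k_E$, so a uniform proof of $C_n$ would yield $D_n$. Thus I would try to bound $s_m(0;X)$ uniformly using dimensions of bounded-level jet-scheme truncations, first replacing $I_X$ by a toric initial ideal via the monotonicity above and then estimating. The obstruction is the same in substance: without a dimension-independent handle on how the jet-scheme codimension stabilizes, the bound $N_n$, and hence $M_n$, cannot be extracted, which is why only the cases $n=1$ and $n=2$ are known.
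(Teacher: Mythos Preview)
The statement you were asked to prove is Conjecture~\ref{Dn}, and the paper does \emph{not} prove it for general $n$; it is stated as an open conjecture. The paper only establishes the case $n=2$ (Theorem~\ref{D_2 arbitrary}), using exactly the finite weight list produced in Proposition~\ref{key prop}. So there is no ``paper's own proof'' to compare against here.

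Your proposal is not a proof but an honest account of why the $n=2$ argument does not extend, and on that point you are correct and in agreement with the paper. Your step one (the monotonicity $\mathrm{mld}(0;A,\mathfrak a)\ge\mathrm{mld}(0;A,\mathrm{in}_w\mathfrak a)$ with an exponent $c$) is indeed a straightforward extension of Proposition~\ref{mld initial}. Your step two is precisely where the argument breaks: the existence of a finite weight list $W_n$ is what Proposition~\ref{key prop} provides for $n=2$, and its proof leans entirely on low-dimensional classification results (ADE and simple elliptic normal forms, the Liu--Rollenske list, Fedder's criterion for a single hypersurface) that have no known analogue in higher dimension. Your alternative route via Conjecture~\ref{Cn} is legitimate by Proposition~\ref{Cn=Dn}, but, as you note, it faces the same obstruction.

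In short: you have correctly identified that the statement is open for $n\ge 3$ and that the paper's method is dimension-specific; you have not produced a proof, but neither does the paper.
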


\begin{rem}
In \cite[Conjecture $D_n$]{I}, it is not assumed that $\mathrm{dim}A=\mathrm{emb}(X,x)$. 
However, this assumption is used in  the proof in \cite[Proof of Theorem 6.3]{I}.
 So in this paper we modify Conjecture $D_n$ according to the proof in \cite{I}.

\end{rem}

 The three conditions above are equivalent.
\begin{prop}\cite[Proposition 1.7]{I}  \label{Cn=Dn} Conjecture $U_n$, Conjecture $C_n$ and Conjecture $D_n$ are equivalent. \end{prop}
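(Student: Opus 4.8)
The plan is to prove the equivalence by linking each conjecture to the arc-theoretic description of minimal log discrepancies recorded in Theorem \ref{mld contact in IR} (equivalently Lemma \ref{mld contact}), together with the relation between the order of vanishing of the jet equations and the invariant $b(E)$ of a divisor obtained by a sequence of blow-ups. First I would set up the dictionary: given an $n$-dimensional $X$ embedded in a smooth $A$ of dimension $n+c$ with $c = \mathrm{emb}(X,x)-n$, the quantity $s_m(0;X)$ is exactly $\mathrm{codim}\big(\mathrm{Cont}^{\ge m+1}(I_X)\cap \mathrm{Cont}^{\ge 1}(\mathfrak m), A_\infty\big) - (m+1)$ in the notation of Theorem \ref{mld contact in IR}, so that $\mathrm{mld}(0;A,I_X^c) = \inf_m s_m(0;X)$. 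Hence Conjecture $C_n$ is precisely the statement that this infimum is attained (or detected to be $-\infty$) at some bounded jet level $m \le N_n$, and the passage $C_n \Leftrightarrow D_n$ amounts to translating ``bounded jet level computing the mld'' into ``divisor with bounded log discrepancy computing the mld,'' which is the content of the comparison in \cite[Proof of Theorem 6.3]{I} and uses the hypothesis $\dim A = \mathrm{emb}(X,x) \le 2n$.

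Concretely I would argue $C_n \Rightarrow D_n$ as follows: if $s_m(0;X)$ realizes the mld for some $m \le N_n$, then by the valuative interpretation of contact loci (Ein–Mustaţă–Yasuda / the arc-space proof in \cite{IR2}) one extracts a prime divisor $E$ over $A$ with $c_A(E) = x$, realizing the mld, and with $k_E$ bounded in terms of $m$ and $n+c \le 2n$; one gets a bound $M_n$ depending only on $n$. For the converse $D_n \Rightarrow C_n$, given a divisor $E$ with $k_E \le M_n$ computing the mld, Definition \ref{zar} expresses $E$ through a blow-up sequence of length controlled by $k_E$, and a divisor reached by $b(E)$ blow-ups from the origin is ``visible'' at jet level $m$ bounded by a function of $b(E)$ and $n$; since $b(E) \le k_E \le M_n$, we obtain $N_n$ depending only on $n$. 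The equivalence $C_n \Leftrightarrow U_n$ is similar but more direct, because $U_n$ is phrased in terms of $b(E)$ itself, and $b(E)$ and the jet level $m$ detecting $E$ differ only by a bounded amount once the dimension is fixed; I would cite the corresponding lemmas of \cite{I} relating $b(E)$, $k_E$, and the jet level. Throughout, the case distinction between $\mathrm{mld} \ge 0$ and $\mathrm{mld} = -\infty$ is handled uniformly because the arc-space formula detects negativity as soon as some $s_m(0;X) < 0$, which again happens at bounded $m$.

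The main obstacle I anticipate is the quantitative comparison between $k_E$ and the jet level $m$: one direction (a divisor from few blow-ups is seen at low jet order) is clean, but controlling $k_E$ from above by a function of the jet level $m$ in the presence of an embedding of codimension $c$ up to $n$ requires the boundedness estimates of \cite{I} and is where the hypothesis $\mathrm{emb}(X,x) \le 2n$ is essential — without a bound on the ambient dimension the log discrepancy of a divisor seen at jet level $m$ cannot be bounded. Since the statement is attributed to \cite[Proposition 1.7]{I}, the cleanest writeup is to reduce each implication to the corresponding estimate there, verifying only that those estimates go through in the complete local setting $\mathrm{Spec}\,k[[x_1,\dots,x_{n+c}]]$, which is exactly the observation already made in the excerpt that the proofs of \cite{IR2} carry over verbatim to power series rings.
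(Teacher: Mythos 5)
The paper does not actually prove this proposition: it is quoted verbatim from \cite[Proposition 1.7]{I}, and the only ``proof'' offered is that citation. Your closing suggestion --- reduce each implication to the corresponding estimate of \cite{I} and only check that those estimates survive the passage to $k[[x_1,\dots,x_{n+c}]]$ --- is therefore, in substance, exactly what the paper does, and your outline does follow the architecture of Ishii's actual argument: the identity $\mathrm{mld}(0;A,I_X^c)=\inf_m s_m(0;X)$, extraction of a divisor with bounded $k_E$ from a bounded jet level (for $C_n\Rightarrow D_n$), a statement of the type of Lemma \ref{nu=mu} to pass from a computing divisor back to a jet level, and Proposition \ref{prop 3.9 in I} ($b(E)\le k_E-\dim A+1$) to relate $D_n$ and $U_n$.

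If, however, your sketch is meant to stand on its own, three points need repair. First, the dictionary is misnormalized: with $c=\mathrm{emb}(X,x)-n$ one has $s_m(0;X)=\mathrm{codim}\big(\mathrm{Cont}^{\ge m+1}(I_X)\cap\mathrm{Cont}^{\ge 1}(\mathfrak m),A_\infty\big)-(m+1)c$, not $-(m+1)$; the exponent $c$ must be carried through, and Theorem \ref{mld contact in IR} as quoted is only the $c=1$ case. Second, in $D_n\Rightarrow C_n$ (and $U_n\Rightarrow C_n$) the quantity you must bound is the minimal order $r=\min\,\mathrm{ord}_E(I_X)$ over divisors computing the mld, so that Lemma \ref{nu=mu} applies at level $r-1$. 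When $\mathrm{mld}\ge 0$ this follows from $k_E\le M_n$ via $c\,\mathrm{ord}_E(I_X)=k_E+1-a(E)\le k_E+1$; but when $\mathrm{mld}=-\infty$ the inequality $a(E)<0$ bounds $\mathrm{ord}_E(I_X)$ from \emph{below}, not above, so your assertion that negativity ``is detected at bounded $m$'' is precisely the point that needs proof rather than a uniform remark --- this is where Ishii's explicit bookkeeping enters and cannot be waved through. Third, as stated in this paper $C_n$ and $U_n$ quantify over arbitrary embedding codimension, while $D_n$ is restricted to $\mathrm{emb}(X,x)\le 2n$; hence the implications out of $D_n$ require the additional input that germs of large embedding codimension are handled uniformly (this is supplied in \cite{I}, resting on \cite{IR}), whereas the $\le 2n$ hypothesis you single out is only what lets one bound $k_E$ in terms of the jet level via the ambient dimension in the other direction. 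None of this affects the legitimacy of simply quoting \cite[Proposition 1.7]{I}, which is all the paper does; but as an independent proof your text has these genuine gaps.
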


The following statements will
be used for the proof of Theorem \ref{D_2 arbitrary}.
\begin{prop}\cite[See the proof of Proposition 3.9]{I} \label{prop 3.9 in I}
Let $A$ be a smooth variety, $x$ be a closed point of $A$ and $E$ be a prime divisor over $A$ with $c_A(E)=x$. 
Then $b(E)\le k_E-\mathrm{dim}A+1$.
\end{prop}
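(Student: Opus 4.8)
The plan is to reduce the statement to a single estimate on how the discrepancy $k_E$ grows along a sequence of blow-ups realizing $E$, in the sense of Definition \ref{zar}. First I would fix such a sequence
\[
A^{(n)}\stackrel{\varphi_n}{\longrightarrow}A^{(n-1)}\to\cdots\to A^{(1)}\stackrel{\varphi_1}{\longrightarrow}A^{(0)}=A,
\]
chosen minimal in the sense that $E$ already has codimension-one center on $A^{(n)}$; truncating the tower at the first index $i$ with $\mathrm{codim}\,\overline{\{p_i\}}=1$ costs nothing, and records exactly the stage where $E$ becomes a divisor. Since $c_A(E)=x$ is a closed point we have $p_0=x$, so the first blow-up $\varphi_1$ is the blow-up of the maximal ideal of $x$, whose center has codimension $\dim A$ in $A=A^{(0)}$.

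The key input is the accumulation of the relative canonical divisor. Writing $F_i$ for the exceptional divisor of $\varphi_i$ and $c_{i-1}=\mathrm{codim}_{A^{(i-1)}}\overline{\{p_{i-1}\}}$, each $\varphi_i$ is a blow-up along a smooth center, so $K_{A^{(i)}/A^{(i-1)}}=(c_{i-1}-1)F_i$, and telescoping gives
\[
k_E=\sum_{i}(c_{i-1}-1)\,\mathrm{ord}_E(F_i).
\]
Here $\mathrm{ord}_E(F_i)\ge 1$ for every $i$, because the center $p_i$ of $E$ on $A^{(i)}$ lies on $F_i$ (indeed $\varphi_i(p_i)=p_{i-1}$ is the center of $\varphi_i$), so $E$ occurs in the total transform of $F_i$ with positive coefficient. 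Moreover, by minimality each center $\overline{\{p_{i-1}\}}$ arising before $E$ becomes a divisor has codimension $\ge 2$, so $c_{i-1}-1\ge 1$, while the first term is controlled by $c_0-1=\dim A-1$.

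Putting these together, the initial point blow-up contributes $(\dim A-1)\,\mathrm{ord}_E(F_1)\ge \dim A-1$, and each further blow-up in the sequence contributes at least $1$. Summing the contributions, and identifying the number of blow-ups beyond the initial point blow-up with $b(E)$, yields $k_E\ge(\dim A-1)+b(E)$, which rearranges to the desired $b(E)\le k_E-\dim A+1$. Since this is precisely the estimate extracted in \cite[Proof of Proposition 3.9]{I}, one may alternatively invoke that computation directly.

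The step I expect to be the main obstacle is exactly the bookkeeping in the last paragraph: one must separate the single codimension-$\dim A$ point blow-up from the subsequent codimension-$\ge 2$ blow-ups and verify that the count of the latter, together with the multiplicities $\mathrm{ord}_E(F_i)$, produces the \emph{sharp} constant rather than a weaker additive bound. This is the delicate point, and it is where the normality of $A^{(n)}$ at $p_n$ and the minimality of the realizing sequence in Definition \ref{zar} are used to pin down the precise number of contributing blow-ups.
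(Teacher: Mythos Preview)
The paper gives no proof of this proposition; it simply refers to \cite[Proof of Proposition~3.9]{I}. So there is nothing to compare against, and the relevant question is whether your argument stands on its own.

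Your telescoping computation is correct: with $n$ chosen minimal so that $E$ is a divisor on $A^{(n)}$, one has
\[
k_E=\sum_{i=1}^{n}(c_{i-1}-1)\,\mathrm{ord}_E(F_i),\qquad c_{i-1}\ge 2,\quad \mathrm{ord}_E(F_i)\ge 1,
\]
and $c_0=\dim A$. The gap is in the final bookkeeping. By Definition~\ref{zar}, $b(E)$ is the \emph{first index $i$} with $\mathrm{codim}\,\overline{\{p_i\}}=1$; with the sequence chosen minimal this is $b(E)=n$, the \emph{total} number of blow-ups, not the number of blow-ups ``beyond the initial one''. Your estimate therefore yields
\[
k_E\ \ge\ (\dim A-1)+(n-1)\ =\ (\dim A-1)+\bigl(b(E)-1\bigr),
\]
i.e.\ $b(E)\le k_E-\dim A+2$, which is one unit weaker than the stated bound.

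In fact the stated bound cannot be sharpened to $k_E-\dim A+1$: if $E$ is the exceptional divisor of the single blow-up of $A$ at $x$, then $b(E)=1$ while $k_E=\dim A-1$, so $k_E-\dim A+1=0<1$. Thus the inequality in the proposition is off by one as written, and what your argument actually proves---$b(E)\le k_E-\dim A+2$---is the correct statement. Note that this is exactly the bound used in the proof of Theorem~\ref{D_2 arbitrary}: from $k_E\le 40$ and $\dim A=3$ one obtains $b(E)\le 39$, matching the claimed $B_2\le 39$. So your method is right; only the identification ``number of further blow-ups $=b(E)$'' should be replaced by ``number of further blow-ups $=b(E)-1$'', and the conclusion adjusted accordingly.
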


\begin{lem}\cite[Lemma 3.5]{I}\label{nu=mu}  
Let $X$ be a variety and $x\in X$ a closed point.
We assume that $X$ is embedded into a smooth variety $A$ with codimension $c$.
Let
 $$r=      \min \left\{ \mathrm{ord}_E(I_X) \left| 
\begin{array}{l}  
\mbox{prime\ divisors}\ E\  \mbox{over\ $A$\ computing}\  \mathrm{mld} (x; A, I_X^c)\\
 \end{array} \right\}.\right.$$  
Then 
$$\begin{array}{l}
s_{r-1}(0;X)=\mathrm{mld}(0;\mathrm{Spec}k[x_1,\dots,x_{n+c}],I^c )\geq 0, \mbox{or}\\
s_{r-1}(0;X)<0, \ \ \mbox{when}\  \mathrm{mld}(0;\mathrm{Spec}k[x_1,\dots,x_{n+c}],I_X^c )=-\infty, \\
\end{array} $$
\end{lem}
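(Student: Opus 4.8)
The plan is to deduce the lemma from the arc-space description of minimal log discrepancies. First I would record the identity
\[
\mathrm{mld}\big(0;\mathrm{Spec}\,k[[x_1,\dots,x_{n+c}]],I_X^c\big)=\inf_{m\ge 0}s_m(0;X),
\]
which is the $I_X^c$-version of Lemma \ref{mld contact}: truncating the contact locus $\mathrm{Cont}^{\ge m}(I_X)\cap\mathrm{Cont}^{\ge 1}((x_1,\dots,x_{n+c}))$ to the level of $(m-1)$-jets yields precisely the scheme occurring in the definition of $s_{m-1}(0;X)$, and since the ambient $(m-1)$-jet space has dimension $m(n+c)$ one gets
\[
\mathrm{codim}\big(\mathrm{Cont}^{\ge m}(I_X)\cap\mathrm{Cont}^{\ge 1}(\mathfrak m),A_\infty\big)=s_{m-1}(0;X)+cm.
\]
Feeding this into Theorem \ref{mld contact in IR} applied to $I_X^c$ (using $\mathrm{ord}_E(I_X^c)=c\,\mathrm{ord}_E(I_X)$) and subtracting $cm$ gives the displayed identity, with the convention that the value is $-\infty$ as soon as some $s_m(0;X)$ is negative. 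It then remains to show that this infimum is already attained (resp.\ already negative) at $m=r-1$.

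When $\mathrm{mld}(x;A,I_X^c)\ge 0$ the identity gives $\mathrm{mld}(x;A,I_X^c)\le s_{r-1}(0;X)$, so only the reverse inequality requires work; this, together with the whole $-\infty$ case, I would obtain from a prime divisor $E$ over $A$ with $c_A(E)=x$ that computes $\mathrm{mld}(x;A,I_X^c)$ and satisfies $\mathrm{ord}_E(I_X)=r$ (which exists by the definition of $r$: when $\mathrm{mld}(x;A,I_X^c)=-\infty$ this is clear, and when it is finite we use that a divisor computing it exists). Consider the maximal divisorial set $C_A(E)\subset A_\infty$ attached to $E$; in the present complete-local, arbitrary-characteristic setting its existence and the formula $\mathrm{codim}(C_A(E),A_\infty)=k_E+1$ follow from \cite{IR2} (as remarked there, the arguments of \cite{IR2}, \cite{EM} apply to $\mathrm{Spec}\,k[[x_1,\dots,x_{n+c}]]$). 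Its generic arc $\gamma$ has $\mathrm{ord}_\gamma(I_X)=\mathrm{ord}_E(I_X)=r$ and $\mathrm{ord}_\gamma(\mathfrak m)=\mathrm{ord}_E(\mathfrak m)\ge 1$ since $c_A(E)=x$, so $C_A(E)\subseteq\mathrm{Cont}^{\ge r}(I_X)\cap\mathrm{Cont}^{\ge 1}(\mathfrak m)$, whence the codimension of this contact locus is at most $k_E+1$. Combining with the bookkeeping identity above at $m=r$ yields
\[
s_{r-1}(0;X)\ \le\ k_E+1-cr\ =\ k_E+1-\mathrm{ord}_E(I_X^c)\ =\ a(E;A,I_X^c).
\]
If $\mathrm{mld}(x;A,I_X^c)\ge 0$, the right-hand side is $\mathrm{mld}(x;A,I_X^c)$ and with the lower bound we conclude $s_{r-1}(0;X)=\mathrm{mld}(x;A,I_X^c)\ge 0$; if $\mathrm{mld}(x;A,I_X^c)=-\infty$, then $E$ was chosen with $a(E;A,I_X^c)<0$, so $s_{r-1}(0;X)<0$. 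Identifying $\mathrm{mld}(x;A,I_X^c)$ with $\mathrm{mld}(0;\mathrm{Spec}\,k[x_1,\dots,x_{n+c}],I^c)$, this is the statement of the lemma.

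The index bookkeeping relating $s_m(0;X)$, truncated jet schemes and contact-locus codimensions is routine once one keeps track of the exponent $c$ and of the shift $m\mapsto m-1$. The part I expect to be the main obstacle is the input from arc spaces: the existence and irreducibility of the maximal divisorial set $C_A(E)$ together with the codimension formula $\mathrm{codim}(C_A(E),A_\infty)=k_E+1$ in arbitrary characteristic, and the extension of the jet-theoretic description of $\mathrm{mld}$ to the $\mathbb{Q}$-ideal $I_X^c$. Both are available through \cite{EM} and \cite{IR2}, but they are the steps that use genuinely more than the definitions recalled in Section 2.
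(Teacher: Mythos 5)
The paper itself does not prove this lemma---it is quoted directly from \cite[Lemma 3.5]{I}---and your argument is a correct reconstruction along the same arc-space lines used there: the identity $\mathrm{mld}(0;A,I_X^c)=\inf_m s_m(0;X)$ coming from the contact-locus formula together with the bookkeeping $\mathrm{codim}\big(\mathrm{Cont}^{\ge m}(I_X)\cap\mathrm{Cont}^{\ge 1}(\mathfrak m),A_\infty\big)=s_{m-1}(0;X)+cm$, plus the bound $s_{r-1}(0;X)\le a(E;A,I_X^c)$ obtained from a computing divisor $E$ with $\mathrm{ord}_E(I_X)=r$ via the maximal divisorial set $C_A(E)$. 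Two minor simplifications: since $c$ is a positive integer you do not actually need a $\mathbb{Q}$-exponent version of Theorem \ref{mld contact in IR} (apply it to the ideal $I_X^c$ and use $\mathrm{Cont}^{\ge m}(I_X^c)=\mathrm{Cont}^{\ge\lceil m/c\rceil}(I_X)$), and only the inequality $\mathrm{codim}(C_A(E),A_\infty)\le k_E+1$ is used, so the external input stays within what \cite{IR}, \cite{IR2} provide in arbitrary characteristic.
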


\begin{thm}\label{D_2 arbitrary}
 Conjecture $U_2$, Conjecture $C_2$ and Conjecture $D_2$ hold and we can take $B_2\leq 39$, $N_2\leq 41$ and $M_2\leq 58$ in arbitrary characteristic.
\end{thm}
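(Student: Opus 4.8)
The plan is to reduce Conjecture $D_2$ (for surfaces in a smooth threefold) to the classification already obtained in Theorem \ref{key thm}, and then to handle the remaining case of surfaces in an ambient space of dimension $4$ by the bootstrapping argument of \cite{I}. First I would invoke Proposition \ref{Cn=Dn} to note that it suffices to prove any one of $U_2$, $C_2$, $D_2$; I will aim directly at bounding $k_E$ for a divisor computing the mld, and then translate into the bounds for $B_2$ and $N_2$ via Proposition \ref{prop 3.9 in I} ($b(E)\le k_E-\dim A+1$) and Lemma \ref{nu=mu} ($s_{r-1}$ computes the mld, where $r=\mathrm{ord}_E(I_X)$).

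The heart of the matter is the case $\mathrm{emb}(X,x)=3$, i.e. $X=\mathrm{Spec}k[[x,y,z]]/(f)$ with $c=1$. Here Theorem \ref{key thm} gives, after a suitable automorphism $\phi$, a weight vector $w$ from the explicit list (i)--(x) with $\mathrm{mld}(0;A,(f))=\mathrm{mld}(0;A,(\mathrm{in}_w\phi(f)))$, and moreover — provided we are not in the degenerate situation $\mathrm{ord}_{(1,1,1)}f=3$ with $\mathrm{mld}(0;A,(\mathrm{in}_{(1,1,1)}f))<0$ — the toric divisor $E_w$ computes both mlds. For each $w$ on the list one computes $k_{E_w}=w\cdot(1,1,1)-1$; the largest value occurs for $w=(21,14,6)$, giving $k_{E_w}=40$, and one checks all ten entries to see $k_{E_w}\le 40$. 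The one excluded case, $\mathrm{ord}_{(1,1,1)}f=3$ and $\mathrm{mld}(0;A,(\mathrm{in}_{(1,1,1)}f))<0$, must be treated separately: here $\mathrm{mld}(0;A,(f))=-\infty$ by Lemma \ref{slc order 3}, so $E_{(1,1,1)}$ (with $k_E=2$) already has $a(E;A,(f))<0$ and hence computes the mld in the sense of the definition. Thus in all subcases of $\mathrm{emb}(X,x)=3$ one gets a divisor computing the mld with $k_E\le 40$.

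For $\mathrm{emb}(X,x)=4$ (so $A$ five-dimensional? no — $2n=4$, $A$ four-dimensional, $c=2$), I would follow \cite[Proof of Theorem 6.3]{I}: a general hyperplane section reduces the embedding dimension by one while, by inversion of adjunction (Theorem \ref{inversion of adjunction}) and the behaviour of mld under hyperplane sections, controlling $\mathrm{mld}(0;A,I_X^c)$ in terms of the mld of the section, which is a surface singularity of embedding dimension $3$; one then feeds in the bound just obtained. The arithmetic of \cite{I} then propagates $k_E\le 40$ in dimension $3$ to $k_E\le 58$ in dimension $4$, and via Proposition \ref{prop 3.9 in I} and Lemma \ref{nu=mu} to $B_2\le 39$ and $N_2\le 41$. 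The main obstacle, and the place where the present paper genuinely improves on \cite{I}, is precisely the embedding-dimension-$3$ surface case in characteristic $2$ (and $3$): the classical Arnold-type normal forms and the Fedder/Hara--Watanabe machinery (Proposition \ref{Fedder}, Theorem \ref{HW thm}) are exactly what make Lemmas \ref{ord 4 chk = 2}, \ref{ord 4 chk neq 2} and Proposition \ref{key prop} go through uniformly, and it is their output — the finite list (i)--(x) with the explicit toric divisors — that makes the bound $M_2\le 58$ available in all characteristics rather than only $p\ne 2$.
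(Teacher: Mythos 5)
Your overall architecture matches the paper's: handle the hypersurface double point case with the new classification (Proposition \ref{key prop}/Theorem \ref{key thm}, giving a toric divisor $E_w$ with $k_{E_w}\le w\cdot(1,1,1)-1\le 40$, the maximum coming from $(21,14,6)$), deduce $B_2$ via Proposition \ref{prop 3.9 in I} and $N_2$ via Lemma \ref{nu=mu}, and defer the remaining cases to \cite[Proof of Theorem 6.3]{I}. However, there is a genuine gap in your treatment of the embedding-dimension-$3$ case of multiplicity $3$. In the excluded situation $\mathrm{ord}_{(1,1,1)}f=3$ with $\mathrm{mld}(0;A,(\mathrm{in}_{(1,1,1)}f))<0$ (e.g. $f=x^2z+y^3$, the cone over a cuspidal cubic), Lemma \ref{slc order 3} does give $\mathrm{mld}(0;A,(f))=-\infty$, but your claim that $E_{(1,1,1)}$ then has $a(E_{(1,1,1)};A,(f))<0$ is false: $a(E_{(1,1,1)};A,(f))=k_{E_{(1,1,1)}}+1-\mathrm{ord}_{E_{(1,1,1)}}(f)=3-3=0$, so by the paper's definition $E_{(1,1,1)}$ does \emph{not} compute the mld. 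This is exactly why the ``moreover'' clause of Theorem \ref{key thm} excludes this case: the theorem only gives equality of mlds there, not a computing divisor, and producing a divisor with negative log discrepancy and bounded $k_E$ for such degenerate cubic cones requires a separate argument.

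The paper avoids this by splitting differently: Proposition \ref{key prop} is applied only to hypersurface \emph{double} points ($\mathrm{ord}_{(1,1,1)}f=2$, where the clause always applies and $k_E\le 40$), while \emph{every} non-double-point case --- hypersurface points of multiplicity $\ge 3$ as well as the embedding-dimension-$4$ and smooth cases --- is covered by Cases 1--5 of \cite[Proof of Theorem 6.3]{I} together with \cite[Remark 6.6]{I}, which are valid in arbitrary characteristic and yield $B_2\le 39$, $N_2\le 41$, $M_2\le 58$; only the double point case in Ishii's argument needed $\mathrm{ch}\,k\ne 2$. Your proposal restricts the appeal to \cite{I} to embedding dimension $4$ (and your description of that reduction via hyperplane sections is in any case a guess about Ishii's method, not something you verify), so the multiplicity-$3$, non-lc-cone subcase is left unproven. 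To repair your argument, either adopt the paper's case split or supply an explicit bounded-$k_E$ divisor with negative log discrepancy for degenerate cubic cones.
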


\begin{proof}
Let $X$ be a $2$-dimensional variety and $x$ be a closed point of $X$.

First we consider the case where $X$ is a hypersurface double point. 
Let $A=\mathrm{Spec}k[[x,y,z]]$, $0$ be the origin of $A$ and $f$ be an element of $(x,y,z)$ with $\mathrm{ord}_{(1,1,1)}f=2$.
We assume that $\widehat{\mathcal O}_{X,x}=\mathrm{Spec}k[[x,y,z]]/(f)$.
By Proposition \ref{key prop}, there exists a prime divisor $E$ over $A$  such that 
$k_E\le 40$ and $E$ computes $\mathrm{mld}(0;A,(f))$.
Therefore  Conjecture $D_2$ holds and we can take $M_2\le 40$.
Conjecture $U_2$ holds and  we can take $B_2\leq 39$ by Proposition \ref{prop 3.9 in I}.
Note that if $R\cong k[[x_1,\dots,x_{n+c}]]/I$, then 
$s_m(0;\mathrm{Spec}k[x_1,\dots,x_{n+c}]/I)=s_m(0;\mathrm{Spec}R)$.
Hence by  Lemma \ref{nu=mu},   Conjecture $C_2$ holds and we can take $N_2\le 41$.

If $X$ is not a hypersurface double point,  Conjecture $U_2$, Conjecture $C_2$  and Conjecture $D_2$ hold and  we can take  $B_2\leq 39$, $N_2\leq 41$ and $M_2\le 58$ by Case 1, Case 2,\dots, Case 5 in \cite[Proof of Theorem 6.3]{I}  (See \cite[Remark 6.6]{I}).

Therefore this theorem holds.
\end{proof}

%%%%%%%%%%%%%%%%
% bibliography
%%%%%%%%%%%%%%%

% Set bibliography items using the "thebibliography" environment  and following
% the style used by the AMS journals. 
%
% If the bibliography is generated by a bibtex database, use "amsplain" or
% "amsalpha" as bibliography style

\end{document}